\documentclass{amsart}
\usepackage[T1]{fontenc}
\usepackage{amsmath, amssymb, amsthm}

\numberwithin{equation}{section}
\theoremstyle{plain}
\newtheorem{theorem}{Theorem}[section]
\newtheorem{lemma}[theorem]{Lemma}
\newtheorem{proposition}[theorem]{Proposition}
\newtheorem{corollary}[theorem]{Corollary}

\theoremstyle{remark}
\newtheorem{remark}[theorem]{Remark}

\theoremstyle{definition}
\newtheorem{definition}[theorem]{Definition}

\newcommand{\Z}{\mathbb{Z}}
\newcommand{\Q}{\mathbb{Q}}
\newcommand{\C}{\mathbb{C}}
\newcommand{\N}{\mathbb{N}}
\newcommand{\R}{\mathbb{R}}
\newcommand{\U}{\mathbb{U}}
\newcommand{\Na}{\mathrm{N}}

\title[Hal\'asz theorems for Gaussian ideals]{Hal\'asz theorems for Gaussian ideals in sectors and short intervals}
\author{Jan Ku\'s}
\address{Mathematics Institute, University of Warwick, Coventry CV4 7AL, United Kingdom}
\email{jankus493@gmail.com}
\subjclass[2020]{11N37, 11N60, 11R11}
\keywords{Hal\'asz theorem, multiplicative functions, Gaussian integers, pretentious distance, sectorial sums, short intervals}
\date{\today}

\begin{document}

\begin{abstract}
We prove a quantitative Hal\'asz theorem for multiplicative functions on the nonzero ideals of
$\Z[i]$, with bounds controlled by pretentious distance to the Archimedean characters $\Na^{it}$.
We also prove a sectorial analogue: under angular non-pretentiousness,
the sum of $f$ over ideals lying in a fixed sector is asymptotically given by the expected
proportion of the unrestricted sum.
Finally, under angular non-pretentiousness and a non-degeneracy
condition on conjugate prime pairs, we prove a sectorial short-interval version of the Hal\'asz theorem for
annular sectors whose radial thickness tends to infinity.
The proof of the sectorial short-interval Hal\'asz theorem uses angular Fourier expansion, norm-compression to multiplicative functions on $\N$,
and a theorem of Mangerel.
\end{abstract}
\maketitle

\tableofcontents
\bigskip

\section{Introduction}

Hal\'asz's theorem bounds the mean value of a multiplicative function in terms of how closely it
mimics the Archimedean characters $n^{it}$. In the pretentious framework of Granville--Soundararajan, this
closeness is measured by a prime-weighted distance; see \cite{GS03,GHS19,GS} for background and
refinements in the classical setting.

In this paper we prove corresponding results over the Gaussian integers.
We work with multiplicative functions on the nonzero ideals of $\Z[i]$, ordered by norm.
Using ideals rather than elements removes the ambiguity coming from the unit group.
At the same time, by choosing for each nonzero ideal its unique canonical generator in the first
quadrant, one can still recover angular information.

We prove three results. The first is a quantitative Hal\'asz theorem for multiplicative functions
on Gaussian ideals. The second is a sectorial version: under angular non-pretentiousness,
the sum over a fixed sector contributes the expected proportion of the unrestricted sum.
The third is a sectorial short-interval Hal\'asz theorem under angular non-pretentiousness and a
non-degeneracy condition on conjugate prime pairs.

Throughout, let $\mathcal I$ denote the set of nonzero ideals of $\Z[i]$. 
For $\mathfrak a\in\mathcal I$ write $\Na\mathfrak a$ for its norm. Write  $\Re z$ for the real part of $z$, $\overline z$ for complex conjugation, and
$
\overline{\mathfrak a}:=\{\overline z:z\in\mathfrak a\}
$
for the conjugate ideal.
A function $f:\mathcal I\to\C$ is called multiplicative if $f(\Z[i])=1$ and
$f(\mathfrak a\mathfrak b)=f(\mathfrak a)f(\mathfrak b)$ whenever $(\mathfrak a,\mathfrak b)=\Z[i]$.
It is completely multiplicative if
$f(\mathfrak a\mathfrak b)=f(\mathfrak a)f(\mathfrak b)$ for all
$\mathfrak a,\mathfrak b\in\mathcal I$.

The proof of the quantitative ideal Hal\'asz theorem follows the
Perron--triple-convolution argument of Granville--Harper--Soundararajan, in the
pretentious framework of Granville--Soundararajan; the main adaptation is the
corresponding short-interval estimate for the ideal von Mangoldt function.
The sectorial Hal\'asz theorem is proved by expanding the sector
indicator into angular Fourier modes and applying Theorem~\ref{thm:halasz_1bounded_mult} to the functions $f\lambda_m$.
In the proof of the sectorial short-interval Hal\'asz theorem, we combine the same Fourier expansion with norm-compression to
multiplicative functions on $\N$ and then apply a theorem of Mangerel.

The closest precedent for Theorem~\ref{thm:halasz_1bounded_mult} is the quantitative
Hal\'asz theorem of Granville, Harper and Soundararajan over $\N$, whose proof also
underlies our argument; their function-field work gives the analogous result over
$\mathbb F_q[x]$ \cite{GHS19,GHSFF}. Earlier qualitative analogues of Hal\'asz's theorem in
much more general settings were obtained by Lucht and Reinfenrath \cite{LR01}, and by
Debruyne, Maes and Vindas \cite{DMV20}; these results cover abstract semigroup and Beurling
settings, but do not provide a quantitative bound of the type proved in
Theorem~\ref{thm:halasz_1bounded_mult}. In the Gaussian setting, Donoso, Le, Moreira and
Sun \cite{DLMS24} proved a Gaussian-integer analogue of Wirsing's theorem for bounded
completely multiplicative functions.  More recently, Donoso, Ferr\'e Moragues, Koutsogiannis and
Sun \cite{DFKS26} used the qualitative Hal\'asz theorem of Lucht and Reinfenrath as an input
in their study of partition regularity over imaginary quadratic rings.

\subsection{Hal\'asz theorems over $\Z[i]$}

We begin with the unrestricted mean-value problem for multiplicative functions on Gaussian ideals.
For the first theorem only the $1$-bounded pretentious distance is needed.
For $x\ge 3$ and $t\in\R$, define
\[
\mathbb D(f,\Na^{it};x)^2
:=\sum_{\Na\mathfrak p\le x}
\frac{1-\Re\!\bigl(f(\mathfrak p)(\Na\mathfrak p)^{-it}\bigr)}{\Na\mathfrak p},
\]
where the sum runs over prime ideals $\mathfrak p\subset\Z[i]$, and set
\[
M_{\mathrm{pret}}(x):=\min_{|t|\le \log x}\mathbb D(f,\Na^{it};x)^2.
\]
Here $\Na^{it}$ denotes the completely multiplicative function on ideals given on prime ideals by
\[
(\Na^{it})(\mathfrak p)=(\Na\mathfrak p)^{it}.
\]

\begin{theorem}[Hal\'asz pretentious form for $1$-bounded multiplicative functions]
\label{thm:halasz_1bounded_mult}
Let $f:\mathcal I\to\C$ be multiplicative on nonzero ideals of $\Z[i]$ and assume that
$|f(\mathfrak a)|\le 1$ for all $\mathfrak a\in\mathcal I$.
Then
\[
\sum_{\Na\mathfrak a\le x} f(\mathfrak a)
\ \ll\
(1+M_{\mathrm{pret}}(x))e^{-M_{\mathrm{pret}}(x)}\,x
\ +\ \frac{x}{\log x}\,\log\log x,
\]
for all $x\ge 3$, with an absolute implied constant.
\end{theorem}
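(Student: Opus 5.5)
We follow the Granville--Harper--Soundararajan argument \cite{GHS19}, transplanted to the Dedekind zeta function $\zeta_{\Q(i)}(s)=\sum_{\mathfrak a}\Na\mathfrak a^{-s}$. Write $F(s)=\sum_{\mathfrak a}f(\mathfrak a)\Na\mathfrak a^{-s}$ for $\Re s>1$. Define the ideal von Mangoldt function $\Lambda(\mathfrak p^k)=\log\Na\mathfrak p$, and let $\Lambda_f$ be given by $-F'/F=\sum\Lambda_f(\mathfrak a)\Na\mathfrak a^{-s}$.

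\textbf{Step 1: the logarithmic weighting.} The starting identity is
\[
\sum_{\Na\mathfrak a\le x}f(\mathfrak a)\log\Na\mathfrak a=\sum_{\Na\mathfrak a\mathfrak b\le x}f(\mathfrak a)\Lambda_f(\mathfrak b).
\]
Since $\sum_{\Na\mathfrak a\le x}|f(\mathfrak a)|\log(x/\Na\mathfrak a)\ll x$, it suffices to bound $\frac1{\log x}\bigl|\sum_{\Na\mathfrak a\le x}f(\mathfrak a)\log\Na\mathfrak a\bigr|$.

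The GHS approach replaces $\Lambda_f$ by a smoothed triple-convolution structure. Concretely, one restricts to ideals with a prime factor of norm in $(y,x]$ for $y=\exp(\log x/\log\log x)$ or similar. Ideals without such a prime factor contribute $\ll x/\log x$, as in the rational case, since the count of $y$-smooth ideals of norm $\le x$ obeys the same de Bruijn bounds. Alternatively, and more simply, one can work directly with the Perron integral
\[
\frac{1}{2\pi i}\int_{c-iT}^{c+iT}\frac{F'}{F}(s)\,F(s)\,\frac{x^s}{s}\,ds,
\qquad c=1+\frac1{\log x}.
\]
One splits $F'/F\cdot F$ into a product of three Dirichlet series: $F$ restricted to small primes, the prime-supported piece, and the remaining $F$. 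Cauchy--Schwarz then gives a bound of the form
\[
\max_{|t|\le T}|F(c+it)|\times\Bigl(\int\Bigl|\frac{F'}{F}(c+it)\Bigr|^2\frac{dt}{|c+it|^2}\Bigr)^{1/2}\times(\text{similar}).
\]

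\textbf{Step 2: the mean-square bounds.} These follow from a Montgomery--Vaughan/Plancherel mean value theorem applied to sums over ideals. Namely,
\[
\int_{-\infty}^{\infty}\Bigl|\sum_{\mathfrak a}\frac{a(\mathfrak a)}{\Na\mathfrak a^{c+it}}\Bigr|^2\frac{dt}{|c+it|^2}\ll\int_1^\infty\Bigl|\sum_{u<\Na\mathfrak a\le u(1+1/T)}a(\mathfrak a)\Bigr|^2\frac{du}{u^{3}}\cdots
\]
To use this we need, for $a=\Lambda_f$ with $|\Lambda_f|\le\Lambda$, a short-interval upper bound
\[
\sum_{u<\Na\mathfrak a\le u+h}\Lambda(\mathfrak a)\ll h\qquad(h\ge u^{\theta}).
\]
This is the adaptation flagged in the introduction. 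It follows from the Brun--Titchmarsh inequality for primes $p\equiv1\pmod4$ in short intervals: a prime ideal of norm $p$ with $p$ split, plus the ramified prime and the inert primes of norm $p^2$. Each rational prime gives at most two ideals, so the bound reduces to the rational Brun--Titchmarsh theorem. Prime powers are negligible.

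\textbf{Step 3: the maximum of $|F|$.} For the $\max|F(c+it)|$ factor we use the standard Euler product computation. This gives
\[
|F(1+1/\log x+it)|\ll\log x\,\exp\bigl(-\mathbb D(f,\Na^{it};x)^2\bigr),
\]
using Mertens' theorem for Gaussian prime ideals, $\sum_{\Na\mathfrak p\le x}1/\Na\mathfrak p=\log\log x+O(1)$, which follows from the prime ideal theorem.

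For $|t|$ beyond $\log x$, one uses the GHS device of taking $T=\log x$ with a smoothing that makes large $t$ contribute $\ll x\log\log x/\log x$. One also uses the fact that $\mathbb D(f,\Na^{it};x)$ varies slowly in $t$ on scales $1/\log x$. This needs the zero-free-region estimate $\sum_{\Na\mathfrak p\le x}\Na\mathfrak p^{-1-i\tau}\ll\log(1/|\tau|)$ for $1/\log x\le|\tau|\le\log x$, and $\ll 1+\log\log|\tau|$ beyond, via Vinogradov--Korobov for $\zeta_{\Q(i)}=\zeta\cdot L(\cdot,\chi_{-4})$.

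The $(1+M)e^{-M}$ shape, rather than $(1+M)e^{-M}$ with an extra loss, comes from integrating in $\sigma$ from $1+1/\log x$ to $1+1$ as in GHS. This means bounding $\int_{1/\log x}^1\max_t|F(1+\alpha+it)|\,d\alpha/\alpha$ and using
\[
\max_{|t|\le\log x}|F(1+\alpha+it)|\ll\frac{1}{\alpha}\exp\bigl(-\mathbb D(f,\Na^{it};e^{1/\alpha})^2\bigr)
\]
together with monotonicity of the distance.

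\textbf{Main obstacle.} I expect the main difficulty to be the careful bookkeeping in this $\alpha$-integration and the large-$|t|$ tail that produces exactly the $\frac{x}{\log x}\log\log x$ error. The arithmetic inputs themselves (Brun--Titchmarsh, Mertens, zero-free region) transfer directly through the factorization $\zeta_{\Q(i)}=\zeta\, L(\cdot,\chi_{-4})$.
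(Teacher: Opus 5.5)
Your analytic outline (the GHS argument over $\zeta_{\Q(i)}$, Brun--Titchmarsh modulo $4$ for the short-interval bound on $\Lambda$, and the Euler-product bound $|F(1+1/\log x+it)|\ll(\log x)\exp(-\mathbb D(f,\Na^{it};x)^2)$) matches what the paper does. The gap is that Steps~1--2 silently assume $|\Lambda_f|\le\Lambda$, and this is false for a general $1$-bounded multiplicative $f$. For example, take $\mathfrak p=(1+i)$ and $f(\mathfrak p^k)=-1$ for all $k\ge1$. Then $\sum_{k\ge0}f(\mathfrak p^k)z^k=(1-2z)/(1-z)$, so $\Lambda_f(\mathfrak p^k)=-(2^k-1)\log 2$, and no bound $|\Lambda_f|\le\kappa\Lambda$ holds for any fixed $\kappa$. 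The bound does hold when $f$ is completely multiplicative, since then $\Lambda_f(\mathfrak p^k)=f(\mathfrak p)^k\log\Na\mathfrak p$. So as written your argument proves the theorem only in that case.

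The missing step, which is how the paper concludes, is a reduction. Write $f=g*h$ with $g$ completely multiplicative and $g(\mathfrak p)=f(\mathfrak p)$. Let $h$ be multiplicative with $h(\mathfrak p)=0$ and $h(\mathfrak p^k)=f(\mathfrak p^k)-f(\mathfrak p)f(\mathfrak p^{k-1})$ for $k\ge2$, so that $|h(\mathfrak p^k)|\le2$ and $h$ is supported on squarefull ideals. Then $S_f(x)=\sum_{\mathfrak d}h(\mathfrak d)S_g(x/\Na\mathfrak d)$. For $\Na\mathfrak d\le x^{1/2}$, apply the completely multiplicative case to $g$ and use $\sum_{\mathfrak d}|h(\mathfrak d)|/\Na\mathfrak d\ll1$. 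Note that $g$ has the same $M_{\mathrm{pret}}$ as $f$, and $M_{\mathrm{pret}}(y)\ge M_{\mathrm{pret}}(x)-O(1)$ for $y\ge x^{1/2}$ by Mertens. For $\Na\mathfrak d>x^{1/2}$, use $|S_g(Y)|\ll Y$ together with $\sum_{\Na\mathfrak d>x^{1/2}}|h(\mathfrak d)|/\Na\mathfrak d\ll x^{-1/8}$.

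Your Step~3 also does not yield the factor $(1+M_{\mathrm{pret}}(x))e^{-M_{\mathrm{pret}}(x)}$. Monotonicity of $y\mapsto\mathbb D(f,\Na^{it};y)$ points the wrong way, since you need a lower bound for the distance at $e^{1/\alpha}$. The direct comparison $\mathbb D(f,\Na^{it};e^{1/\alpha})^2\ge\mathbb D(f,\Na^{it};x)^2-2\log(\alpha\log x)-O(1)$ loses a factor $2$, because $1-\Re(\cdot)$ can equal $2$. It gives only $|F(1+\alpha+it)|\ll\alpha(\log x)^2e^{-M_{\mathrm{pret}}(x)}$, and integrating $\min\bigl(\alpha(\log x)^2e^{-M_{\mathrm{pret}}(x)},1/\alpha\bigr)\,d\alpha/\alpha$ produces only $xe^{-M_{\mathrm{pret}}(x)/2}$.

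The paper, following GHS, instead applies the maximum modulus principle to $G(s)=F(s)/s$ on the rectangle $1+1/\log x\le\Re s\le2$, $|\Im s|\le\log x$. On the horizontal sides and on the right side, $|G|\ll1$ by the trivial bound $|F(u+iv)|\le\zeta_{\Q(i)}(u)\ll1/(u-1)$. Hence for $1/\log x\le\sigma\le1$ one gets $\max_{|t|\le\log x}|G(1+\sigma+it)|\le\max\bigl(\max_{|t|\le\log x}|G(1+1/\log x+it)|,\,O(1)\bigr)\ll e^{-M_{\mathrm{pret}}(x)}\log x+1$, the last step by your Euler-product bound. Combined with $|G(1+\sigma+it)|\ll1/\sigma$, this gives $\int_{1/\log x}^{1}\max_{|t|\le\log x}|G(1+\sigma+it)|\,d\sigma/\sigma\ll(1+M_{\mathrm{pret}}(x))e^{-M_{\mathrm{pret}}(x)}\log x+\log\log x$. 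The same trivial bound handles $|t|>\log x$: there $|G|\ll1$, which is the source of the $x\log\log x/\log x$ term. So no zero-free region or Vinogradov--Korobov input is needed.
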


\begin{remark}
Under the hypotheses of Theorem~\ref{thm:halasz_1bounded_mult}, let $0<A\le 1$ and assume that
\[
M_{\mathrm{pret}}(x)\ge 2A\log\log x
\]
for all sufficiently large $x$. Then
\[
\sum_{\Na\mathfrak a\le x} f(\mathfrak a)
\ \ll_A\
\frac{x\log\log x}{(\log x)^A}.
\]
In particular, if $\lim_{x\to\infty}M_{\mathrm{pret}}(x)=+\infty$, then
\[
\sum_{\Na\mathfrak a\le x} f(\mathfrak a)=o(x).
\]
\end{remark}

We next state the more flexible quantitative ideal Hal\'asz theorem from which
Theorem~\ref{thm:halasz_1bounded_mult} is deduced. This is the Gaussian-ideal analogue of the
quantitative Hal\'asz theorem of Granville, Harper and Soundararajan \cite{GHS19}.
For this it is convenient to allow a general
parameter $\kappa\ge 1$.
The following analytic form is the version proved directly; it is later converted to the
pretentious form by an Euler product estimate.
Write \[
\U:=\{z\in\C: |z|\le 1\}.
\]
\begin{definition}[Pretentious distance]\label{def:pret_distance}
Let $\kappa\ge 1$, let $f:\mathcal I\to\C$ and $g:\mathcal I\to\U$ be multiplicative, and assume
that
\[
|f(\mathfrak p)|\le \kappa
\]
for every prime ideal $\mathfrak p\subset\Z[i]$. For $x\ge 2$, define
\[
\mathbb D_\kappa(f,g;x)^2
:= \sum_{\Na\mathfrak p\le x}
\frac{\kappa-\Re\!\left(f(\mathfrak p)\overline{g(\mathfrak p)}\right)}{\Na\mathfrak p},
\]
where the sum is over prime ideals $\mathfrak p\subset\Z[i]$.
For $\kappa=1$ we abbreviate
\[
\mathbb D(f,g;x):=\mathbb D_1(f,g;x).
\]
\end{definition}
\begin{theorem}[Quantitative ideal Hal\'asz theorem]\label{thm:ideal-halasz-M}
Let $\kappa\ge 1$ and let $f:\mathcal I\to\C$ be multiplicative. Define
\[
F(s):=\sum_{\mathfrak a\in\mathcal I}\frac{f(\mathfrak a)}{(\Na\mathfrak a)^s}\qquad(\Re(s)>1).
\]
Assume that the associated von Mangoldt coefficients $\Lambda_f$ of $f$
(as in Definition~\ref{def:Lambda_f_coeff}) satisfy
\[
|\Lambda_f(\mathfrak a)|\le \kappa\,\Lambda(\mathfrak a)
\]
for all $\mathfrak a\in\mathcal I$, where $\Lambda$ is the ideal von Mangoldt function
from Definition~\ref{def:ideal-vonmangoldt}. For $x\ge 3$, define $M=M(x)$ by
\[
\max_{|t|\le (\log x)^{\kappa}}
\left|\frac{F\!\left(1+\frac1{\log x}+it\right)}{1+\frac1{\log x}+it}\right|
\ :=\ e^{-M}(\log x)^{\kappa},
\]
and put
\[
M_+(x):=\max(M(x),0).
\]
Then for all $x\ge 3$,
\[
\sum_{\Na\mathfrak a\le x} f(\mathfrak a)
\ \ll_{\kappa}\
(1+M_+(x))e^{-M_+(x)}\,x(\log x)^{\kappa-1}
\ +\
\frac{x}{\log x}(\log\log x)^{\kappa}.
\]
\end{theorem}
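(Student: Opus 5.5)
We follow the Granville--Harper--Soundararajan argument \cite{GHS19} essentially verbatim, replacing integers by ideals and Euler products over rational primes by Euler products over prime ideals. The main analytic input specific to $\Z[i]$ is a prime ideal theorem in short intervals: $\sum_{y<\Na\mathfrak a\le y+h}\Lambda(\mathfrak a)\ll h$ for $h\ge y^{\theta}$ with some fixed $\theta<1$ (Brun--Titchmarsh type, or via the Hecke zero-free region). We also need the lattice point count $\#\{\mathfrak a:\Na\mathfrak a\le x\}=\tfrac{\pi}{4}x+O(x^{1/3})$. For the latter we use $\sum_{\Na\mathfrak a\le x}1=\sum_{n\le x}(1*\chi_4)(n)$.

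\emph{Step 1 (reduction to a logarithmically weighted sum).} Write $f\log\Na=f*\Lambda_f$. Partial summation shows that $\sum_{\Na\mathfrak a\le x}f(\mathfrak a)$ equals $\frac1{\log x}\sum_{\Na\mathfrak a\le x}f(\mathfrak a)\log\Na\mathfrak a$ up to an error $\ll x(\log x)^{\kappa-2}$. The bound $\sum_{\Na\mathfrak a\le x}|f(\mathfrak a)|\ll x(\log x)^{\kappa-1}$ comes from the Shiu/Rankin-type bound for $\kappa$-divisor-bounded functions on ideals, using $|\Lambda_f|\le\kappa\Lambda$. Next we truncate: the contribution of ideals with a large prime ideal factor, or with all prime factors $\le x^{1/\log\log x}$, is handled exactly as in \cite{GHS19}. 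This produces the $\frac{x}{\log x}(\log\log x)^\kappa$ term.

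\emph{Step 2 (triple convolution and Perron).} As in GHS, we write the main part of $\sum f\log\Na$ through the triple convolution $f_{\le}\ast\Lambda_f\ast(\text{smooth})$. We smooth over a short interval $[x,x+x/\log x]$ and apply Perron's formula on the line $\Re s=1+1/\log x$. The resulting integral is bounded by
\[
\int\Bigl|\frac{F(\sigma+it)}{\sigma+it}\Bigr|\,\Bigl|\frac{F'}{F}\text{-type factor}\Bigr|\,|\ldots|\,dt .
\]
On the range $|t|\le(\log x)^\kappa$ we take out the maximum $e^{-M}(\log x)^\kappa$ and estimate the remaining factor in $L^2$ by Montgomery's mean value theorem for Dirichlet series over ideals. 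That mean value theorem is valid because the ideal counting function has density $\pi/4$, so $\sum|a_{\mathfrak a}|^2$ weighted by counts behaves exactly as over $\N$. The integral is also taken over $\sigma$ from $1+1/\log x$ to $1+1$ with weight $x^{\sigma-1}$, exactly as in GHS; this yields the factor $(1+M_+)e^{-M_+}$. The range $|t|>(\log x)^\kappa$ is controlled trivially by the short-interval smoothing, whose Perron kernel decays like $\log x/|t|$. This step needs the short-interval von Mangoldt estimate above so that the smoothed partial sums of $\Lambda_f$ are controlled.

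\emph{Main obstacle.} The hardest step is Step 2's treatment of the $\sigma$-integration. One must keep $(1+M)e^{-M}$ uniformly in $M$ rather than losing a $\log\log$ factor. This requires the GHS bound
\[
\int_{1/\log x}^{1}\max_{|t|\le(\log x)^\kappa}\Bigl|\tfrac{F(1+\alpha+it)}{1+\alpha+it}\Bigr|\,x^{-\alpha}\,\frac{d\alpha}{\alpha}\ll (1+M)e^{-M}(\log x)^\kappa .
\]
The input for this is the comparison $|F(1+\alpha+it)|\ll (\alpha\log x)^{?}|F(1+1/\log x+it)|$-type monotonicity. Over ideals this follows from the Euler product together with Mertens' theorem for prime ideals, $\sum_{\Na\mathfrak p\le y}1/\Na\mathfrak p=\log\log y+O(1)$. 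We would first verify this Mertens-type lemma and the ideal Montgomery mean value inequality. With these in hand, the GHS computation transfers without change, with constants depending only on $\kappa$.
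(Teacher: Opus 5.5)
Your Step~1 is the classical Montgomery--Tenenbaum reduction, and it cannot give the stated error term when $\kappa>1$. You bound $\frac1{\log x}\sum_{\Na\mathfrak a\le x}f(\mathfrak a)\log(x/\Na\mathfrak a)$ through $\sum_{\Na\mathfrak a\le t}|f(\mathfrak a)|\ll t(\log t)^{\kappa-1}$, and this costs $x(\log x)^{\kappa-2}$. The same loss recurs when you smooth over $[x,x+x/\log x]$, since $\sum_{x<\Na\mathfrak a\le x+x/\log x}|f(\mathfrak a)|$ can be as large as $x(\log x)^{\kappa-2}$. For $\kappa>1$ this exceeds $\frac{x}{\log x}(\log\log x)^\kappa$ by a power of $\log x$, and the main term does not absorb it when $M_+$ is large. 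For example, take $F=\zeta_{\Q(i)}^{-2}$ (so $\Lambda_f=-2\Lambda$, $\kappa=2$). Then $M=2\log\log x+O(1)$ and the theorem asserts $\ll x(\log\log x)^2/\log x$, while your error bound is $\asymp x$. The argument of the paper (following GHS) never passes through $\sum f\log\Na$. It factors $f=s*\ell$ into $y$-friable and $y$-rough parts, with $y=T^2=(\log x)^{2\kappa+6}$ (a power of $\log x$, not $x^{1/\log\log x}$) and $\eta=1/\log y$, and uses $L(s)=L(s+\eta)+\int_0^\eta L(s+\alpha+\eta)\bigl(-\tfrac{L'}{L}\bigr)(s+\alpha)\,d\alpha+\int_0^\eta\!\int_0^\eta L(s+\alpha+\beta)\tfrac{L'}{L}(s+\alpha)\tfrac{L'}{L}(s+\alpha+\beta)\,d\beta\,d\alpha$ (Lemma~\ref{lem:refined_integral_identity}). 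The first two pieces cost only $\frac{x}{\log x}(\log y)^\kappa\asymp\frac{x}{\log x}(\log\log x)^\kappa$ (Lemma~\ref{lem:error_terms}). Perron's formula is then truncated at $T=(\log x)^{\kappa+3}$ using Lemma~\ref{lem:ideal-vm-short-interval}, not smoothed.

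The second gap is the $\sigma$-integration. You rightly flag it as the crux, but you resolve it with the wrong tool. What the triple-convolution argument yields (Proposition~\ref{prop:ideal-halasz-integral}, i.e.\ GHS Theorem~1.1) is $\frac{x}{\log x}\int_{1/\log x}^{1}\max_{|t|\le(\log x)^\kappa}\bigl|\frac{F(1+\sigma+it)}{1+\sigma+it}\bigr|\frac{d\sigma}{\sigma}$ with \emph{no} weight in $\sigma$. The decay $x^{-\beta/2}$ of $x^{s-\alpha-\beta/2}$ is cancelled by the growth, of order $(x/y)^{\beta}$, of $\int_{-T}^T|P(c_0-\beta/2+it)|^2\,dt$, so neither your $x^{\sigma-1}$ nor your $x^{-\alpha}$ is present. (With $x^{-\alpha}$ your comparison would even remove the factor $1+M$, which is a sign the weight is spurious.) For the unweighted integral, the Euler-product/Mertens comparison gives only $|F(1+\sigma+it)|\ll_\kappa(\sigma\log x)^{\kappa}|F(c_0+it)|$. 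This is sharp pointwise, since $f(\mathfrak p)(\Na\mathfrak p)^{-it}$ may be close to $-\kappa$ for $e^{1/\sigma}<\Na\mathfrak p\le x$. Combined with the trivial bound $\sigma^{-\kappa}$, it yields only about $e^{-M/2}(\log x)^\kappa$.

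The missing idea is the maximum modulus principle for $G(s)=F(s)/s$ on the rectangle $c_0\le\Re s\le 2$, $|\Im s|\le(\log x)^\kappa$. On the horizontal edges and on $\Re s=2$ one has $|G|\ll_\kappa 1$, hence $\max_{|t|\le(\log x)^\kappa}|G(1+\sigma+it)|\le e^{-M}(\log x)^\kappa+O_\kappa(1)$ uniformly for $\sigma\ge 1/\log x$. Use this bound for $\sigma\le\sigma_0:=e^{M/\kappa}/\log x$ and $\sigma^{-\kappa}$ beyond. This gives $(1+M)e^{-M}(\log x)^\kappa$, where the factor $1+M$ comes from the logarithmic length $M/\kappa$ of $[1/\log x,\sigma_0]$.

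Lastly, the mean-value input (Lemma~\ref{lem:MeanSquareGaussian}) needs the weight $\Lambda(\mathfrak a)$ rather than $\Lambda(\mathfrak a)^2$. That comes from Brun--Titchmarsh for primes $\equiv 1\pmod 4$, not from the density $\pi/4$ of ideals. The plain Montgomery--Vaughan inequality would lose essentially a factor $\log x$.
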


\subsection{Sectorial results}

We now turn to angular restrictions. For an interval $J\subset[0,\pi/2)$, we compare the
unrestricted sum with the sum over ideals whose canonical generators have argument in $J$.
The first theorem in this direction is a sectorial Hal\'asz theorem, and the second is its
short-interval analogue.

\begin{definition}[Angular notation]\label{def:angular_notation}
For each nonzero ideal $\mathfrak a\subset\Z[i]$, let $z_{\mathfrak a}\in\Z[i]$ be its unique
generator with $0\le \arg z_{\mathfrak a}<\pi/2$. Set
\[
\arg(\mathfrak a):=\arg z_{\mathfrak a},
\qquad
\lambda_m(\mathfrak a):=e^{4im\arg(\mathfrak a)}\qquad(m\in\Z).
\]
\end{definition}
Since $\arg(\mathfrak a\mathfrak b)\equiv \arg(\mathfrak a)+\arg(\mathfrak b)\pmod{\pi/2}$, each
$\lambda_m$ is completely multiplicative.
For any arithmetic function $F$ on $\mathcal I$, for $x\ge 1$ and $h\ge 1$, define
\[
S_F(x):=\sum_{\Na\mathfrak a\le x}F(\mathfrak a),
\qquad
S_F(x;h):=\sum_{x<\Na\mathfrak a\le x+h}F(\mathfrak a).
\]
If $J=[\theta_1,\theta_2)\subset[0,\pi/2)$ is an interval, set
\[
S_{F,J}(x):=\sum_{\substack{\Na\mathfrak a\le x\\ \arg(\mathfrak a)\in J}}F(\mathfrak a),
\qquad
S_{F,J}(x;h):=\sum_{\substack{x<\Na\mathfrak a\le x+h\\ \arg(\mathfrak a)\in J}}F(\mathfrak a),
\qquad
\delta_J:=\frac{|J|}{\pi/2}.
\]

\begin{definition}[Pretentious parameters for angular characters]\label{def:M_m_1bdd_fixed}
Assume $f:\mathcal I\to\C$ is multiplicative with $|f(\mathfrak a)|\le 1$.
For $m\in\Z$, $x\ge 3$, and $t\in\R$ define
\[
\mathbb D_m(f;t;x)^2
:=\sum_{\Na\mathfrak p\le x}\frac{1-\Re\!\Bigl(f(\mathfrak p)\lambda_m(\mathfrak p)(\Na\mathfrak p)^{-it}\Bigr)}{\Na\mathfrak p},
\]
where the sum is over prime ideals $\mathfrak p\subset\Z[i]$.
Equivalently,
\[
\mathbb D_m(f;t;x)^2=\mathbb D(f\lambda_m,\Na^{it};x)^2.
\]
Set
\[
M_m(x):=\min_{|t|\le \log x}\mathbb D_m(f;t;x)^2,
\qquad
\widetilde M_m(x):=\min_{|t|\le 2x}\mathbb D_m(f;t;x)^2.
\]
Since $\lambda_0\equiv 1$, we have
\[
M_0(x)=M_{\mathrm{pret}}(x).
\]
The quantities $M_m(x)$ measure how closely $f\lambda_m$ can mimic $\Na^{it}$,
equivalently how closely $f$ can mimic $\lambda_{-m}(\cdot)\,\Na(\cdot)^{it}$.
The quantity $\widetilde M_m$ is used only in the short-interval Hal\'asz theorem.
\end{definition}

\begin{theorem}[Sectorial Hal\'asz theorem under angular non-pretentiousness]\label{thm:sectorial_cancellation_1bdd_fixed}
Let $f:\mathcal I\to\C$ be multiplicative with $|f(\mathfrak a)|\le 1$ for all ideals $\mathfrak a$.
Let $J=[\theta_1,\theta_2)\subset [0,\pi/2)$ and let $\delta_J=|J|/(\pi/2)$.
Assume that
\[
    \lim_{x\to\infty} M_m(x)=+\infty 
\quad\text{for every integer }m\ne 0.
\]
Then
\[
S_{f,J}(x)=\delta_J S_f(x)+o(x).
\]
If in addition $\lim_{x\to\infty} M_0(x)=+\infty$, then Theorem~\ref{thm:halasz_1bounded_mult} gives
$S_f(x)=o(x)$, and hence $S_{f,J}(x)=o(x)$.
\end{theorem}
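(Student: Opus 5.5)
The plan is to expand the indicator of the sector $J$ in angular Fourier modes and treat each nonzero mode with Theorem~\ref{thm:halasz_1bounded_mult}. Since $\arg(\mathfrak a)\in[0,\pi/2)$, the variable $\phi=4\arg(\mathfrak a)$ ranges over $[0,2\pi)$. The characters $\lambda_m(\mathfrak a)=e^{im\phi}$ are therefore exactly the Fourier characters in $\phi$, and $\mathbf 1_J(\arg\mathfrak a)=\mathbf 1_{4J}(\phi)$. The indicator is discontinuous, so I would not use its Fourier series directly. Instead, fix $\varepsilon>0$ and take trigonometric polynomials $T^\pm_\varepsilon(\phi)=\sum_{|m|\le K}c^\pm_m e^{im\phi}$ (Beurling--Selberg majorant and minorant, or smoothed indicators of slightly enlarged and shrunk arcs), with $K=K(\varepsilon)$, such that
\[
T^-_\varepsilon\le \mathbf 1_{4J}\le T^+_\varepsilon,\qquad \int_0^{2\pi}(T^+_\varepsilon-T^-_\varepsilon)\,\frac{d\phi}{2\pi}\le\varepsilon,\qquad c^\pm_0=\delta_J+O(\varepsilon).
\]

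Since $f$ is complex-valued, the sandwich cannot be applied to $f$ directly. I would instead write
\[
S_{f,J}(x)-\sum_{\Na\mathfrak a\le x}f(\mathfrak a)T^+_\varepsilon(4\arg\mathfrak a)=\sum_{\Na\mathfrak a\le x}f(\mathfrak a)\bigl(\mathbf 1_{4J}-T^+_\varepsilon\bigr)(4\arg\mathfrak a),
\]
and bound this by $\sum_{\Na\mathfrak a\le x}(T^+_\varepsilon-T^-_\varepsilon)(4\arg\mathfrak a)$, using $|f|\le1$ and $0\le T^+_\varepsilon-\mathbf 1_{4J}\le T^+_\varepsilon-T^-_\varepsilon$. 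This error sum involves only the functions $\lambda_m$ with $|m|\le K$, and equals
\[
\sum_{|m|\le K}(c^+_m-c^-_m)\sum_{\Na\mathfrak a\le x}\lambda_m(\mathfrak a).
\]
Here the main input is angular equidistribution of ideals. For $m\ne0$, $\sum_{\Na\mathfrak a\le x}\lambda_m(\mathfrak a)=o(x)$ by Hecke's theorem. One can also see this from lattice-point counting: the number of Gaussian integers in a sector of the disc of radius $\sqrt x$ is its area plus $O(\sqrt x)$. Alternatively, it follows from Theorem~\ref{thm:halasz_1bounded_mult} applied to $\lambda_m$, since $\lambda_m$ does not pretend to be $\Na^{it}$. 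The zero mode contributes $(c_0^+-c_0^-)\cdot\#\{\mathfrak a:\Na\mathfrak a\le x\}\ll\varepsilon x$. Hence the error is $O(\varepsilon x)+o_\varepsilon(x)$.

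The main term is
\[
\sum_{\Na\mathfrak a\le x}f(\mathfrak a)T^+_\varepsilon(4\arg\mathfrak a)=c^+_0S_f(x)+\sum_{0<|m|\le K}c^+_m S_{f\lambda_m}(x).
\]
Each $f\lambda_m$ is multiplicative, because $\lambda_m$ is completely multiplicative, and it is $1$-bounded. Moreover $M_{\mathrm{pret}}$ for $f\lambda_m$ is exactly $M_m(x)$. Theorem~\ref{thm:halasz_1bounded_mult} therefore gives
\[
S_{f\lambda_m}(x)\ll(1+M_m(x))e^{-M_m(x)}x+\frac{x\log\log x}{\log x}=o(x)
\]
for each fixed $m\ne0$, by hypothesis. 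Since $K$ and the coefficients $|c_m^+|\ll1$ depend only on $\varepsilon$, the finite sum is $o_\varepsilon(x)$. Together with $c_0^+=\delta_J+O(\varepsilon)$ and $|S_f(x)|\ll x$, this gives
\[
S_{f,J}(x)=\delta_JS_f(x)+O(\varepsilon x)+o_\varepsilon(x).
\]
Letting $x\to\infty$ and then $\varepsilon\to0$ proves the claim. The final sentence of the theorem is immediate from Theorem~\ref{thm:halasz_1bounded_mult} with $m=0$.

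The main obstacle is the step replacing the sharp cutoff by trigonometric polynomials while keeping an error that does not involve $f$. I would handle it as above: dominate by the nonnegative difference $T^+_\varepsilon-T^-_\varepsilon$, whose sum needs only equidistribution of ideal angles. If one prefers to avoid citing Hecke, that equidistribution can be checked by the lattice-point count of Gaussian integers in sectors. The edge case $\theta_1=0$ causes no difficulty, since the arcs live on the circle $\phi\in[0,2\pi)$.
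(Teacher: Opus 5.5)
Your argument is correct, but it takes a different route from the paper.

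The paper uses the sharp truncated Fourier series of $\mathbf 1_J$ (Lemma~\ref{lem:fourier_angle_fixed}) with a cutoff $T=T_1(x)\to\infty$ chosen by a diagonal argument. It bounds the remainder $R_T$ pointwise away from the endpoints and sums it by dyadic wedge counting (Lemmas~\ref{lem:wedge_count_window} and~\ref{lem:sum_remainder_window}). The boundary ideals with $\arg(\mathfrak a)\in\{\theta_1,\theta_2\}$ are removed separately. It then applies Theorem~\ref{thm:halasz_1bounded_mult} to every $f\lambda_m$ with $0<|m|\le T_1(x)$, paying a factor $\sum_{0<|m|\le T}|b_m(J)|\ll\log(T+1)$.

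You instead fix $\varepsilon$ and use Beurling--Selberg polynomials of fixed degree $K(\varepsilon)$. You dominate the error by the nonnegative polynomial $T^+_\varepsilon-T^-_\varepsilon$, which does not depend on $f$. This needs only $\sum_{\Na\mathfrak a\le x}\lambda_m(\mathfrak a)=o(x)$ for $m\ne0$, which holds (indeed it is $O(|m|\sqrt x)$ by counting lattice points in sectors). Your route avoids the diagonalization in $x$, the $\log T$ loss, and the endpoint bookkeeping, since the sandwich $T^-_\varepsilon\le\mathbf 1_{4J}\le T^+_\varepsilon$ holds at every angle. For each $\varepsilon$ only finitely many modes $m$ enter, and the order of limits does the rest.

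What the paper's route buys is quantitative content. It produces Theorem~\ref{thm:sectorial_halasz_1bdd_fixed}, with explicit dependence on $T$ and on the $M_m(x)$, uniformly in $J$. Its window version, Proposition~\ref{prop:sector_decomp_window}, is also reused directly for the short-interval theorem. Taking $x\to\infty$ before $\varepsilon\to0$ gives only the qualitative $o(x)$ statement. One could recover a rate by letting $K$ grow with $x$, at the cost of a comparable $\log K$ loss.
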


\begin{remark}
We can deduce a quantitative version of this theorem as well. By Theorem~\ref{thm:sectorial_halasz_1bdd_fixed}, if there exist $0<A\le 1$ and an integer-valued function
$T=T(x)\ge 2$ with $\lim_{x\to\infty}T(x)=+\infty$ such that
\[
\min_{1\le |m|\le T(x)}M_m(x)\ge 2A\log\log x
\]
for all sufficiently large $x$, then
\[
\bigl|S_{f,J}(x)-\delta_J S_f(x)\bigr|
\ll_A
\frac{x\log(T(x)+1)\log\log x}{(\log x)^{A}}
+\frac{x\log(T(x)+1)}{T(x)}
+x^{1/2}.
\]
\end{remark}

The second sectorial result is a short-interval version of this theorem.
Its proof combines angular Fourier decomposition with norm-compression to multiplicative
functions on $\N$, and then applies a theorem of Mangerel.

\begin{theorem}[Sectorial short-interval Hal\'asz theorem]\label{thm:sectorial_MR}
Let $f:\mathcal I\to\C$ be multiplicative with $|f(\mathfrak a)|\le 1$.
Let $h=h(X)$ satisfy $h=o(X)$ and $h/\sqrt X\to\infty$ as $X\to\infty$.
Let $J=J(X)\subset[0,\pi/2)$ be any interval.

Assume that, for every integer $m\ne 0$, the following two hypotheses hold.

\textup{(H1)} There exists $A_m>0$ such that, for all sufficiently large $X$, uniformly for all $2\le z\le w\le X+h$,
\[
\sum_{\substack{z<\Na\mathfrak p\le w\\ \mathfrak p\ne \overline{\mathfrak p}}}
\frac{\left|f(\mathfrak p)\lambda_m(\mathfrak p)+f(\overline{\mathfrak p})\lambda_m(\overline{\mathfrak p})\right|}{\Na\mathfrak p}
\ \ge\
A_m\sum_{\substack{z<\Na\mathfrak p\le w\\ \mathfrak p\ne \overline{\mathfrak p}}}\frac{1}{\Na\mathfrak p}
\ -\ O_m\!\Bigl(\frac{1}{\log z}\Bigr),
\]
where the sums are over prime ideals $\mathfrak p\subset \Z[i]$ distinct from their conjugates.

\textup{(H2)} One has
\[
    \lim_{X\to\infty} \widetilde M_m(X)=+\infty .
\]

Then
\[
\frac{2}{X}\int_{X/2}^{X}
\left|\frac{S_{f,J}(x;h)-\delta_J S_f(x;h)}{h}\right|^2\,dx
\ =\ o(1),
\]
as $X\to\infty$.
\end{theorem}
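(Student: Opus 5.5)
We expand the sector indicator in angular Fourier modes, compress each twisted sum over ideals to a multiplicative function on $\N$, and then apply Mangerel's theorem on variance in short intervals mode by mode.

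\textbf{Step 1: Fourier truncation.} Write $\phi=4\arg(\mathfrak a)\in[0,2\pi)$. Replace $1_J(\arg\mathfrak a)$ by smooth (Beurling--Selberg) majorant and minorant trigonometric polynomials of degree $T$. These have constant term $\delta_J+O(1/T)$ and coefficients $\hat c_m\ll\min(\delta_J,1/|m|)$. This gives
\[
S_{f,J}(x;h)-\delta_J S_f(x;h)=\sum_{1\le|m|\le T}\hat c_m\,S_{f\lambda_m}(x;h)+O\Bigl(\frac1T\sum_{x<\Na\mathfrak a\le x+h}1\Bigr).
\]
Since $h/\sqrt X\to\infty$, the lattice-point count in the annulus satisfies $\sum_{x<\Na\mathfrak a\le x+h}1\ll h+x^{1/3}\ll h$. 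Hence the error term contributes $O(1/T^2)$ to the mean square. By Cauchy--Schwarz with weights $|\hat c_m|$, whose sum is $\ll\log T$, it suffices to show, for each fixed $m\ne0$,
\[
\frac2X\int_{X/2}^X\Bigl|\frac{S_{f\lambda_m}(x;h)}{h}\Bigr|^2dx=o(1).
\]
We then let $T\to\infty$ slowly.

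\textbf{Step 2: norm-compression.} Define $g_m(n):=\sum_{\Na\mathfrak a=n}f(\mathfrak a)\lambda_m(\mathfrak a)$. Because norms of coprime ideals are coprime only after grouping conjugate pairs, $g_m$ is multiplicative on $\N$. Its prime values are:
\begin{itemize}
\item for $p\equiv1\pmod 4$ with $p=\mathfrak p\overline{\mathfrak p}$, $g_m(p)=f(\mathfrak p)\lambda_m(\mathfrak p)+f(\overline{\mathfrak p})\lambda_m(\overline{\mathfrak p})$;
\item for $p\equiv3\pmod4$, $g_m(p)=0$, since the first ideal of norm $p^2$ is $(p)$;
\item $g_m(2)$ is bounded.
\end{itemize}
The function $g_m$ is divisor-bounded, $|g_m(n)|\le\tau(n)$, with $|g_m(p)|\le2$, and $S_{f\lambda_m}(x;h)=\sum_{x<n\le x+h}g_m(n)$. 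Mangerel's theorem (a Matom\"aki--Radziwi\l\l{} type result for divisor-bounded multiplicative functions) bounds the short-interval variance of such $g$ in terms of three inputs:
\begin{itemize}
\item a sieve/density condition requiring $|g(p)|$ to be bounded below on average over primes in intervals $(z,w]$;
\item the quantity $\min_{|t|\le 2X}\mathbb D(g,n^{it};X)$ relative to its $\kappa$-normalisation;
\item the long mean value, which is controlled separately.
\end{itemize}
Hypothesis (H1) is exactly the density condition. Counting ideals with $\mathfrak p\ne\overline{\mathfrak p}$ by pairs converts it to $\sum_{z<p\le w,\,p\equiv1(4)}|g_m(p)|/p\ge A_m\sum 1/p-O(1/\log z)$, noting that the $p\equiv3$ primes have positive density but contribute zero. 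We must check this matches Mangerel's required form, possibly with density $1/2$.

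\textbf{Step 3: the main obstacle.} Translating (H2) into non-pretentiousness of $g_m$ is the hardest step. Mangerel's bound has the shape
\[
\Bigl|\frac1h\sum_{x<n\le x+h}g - \frac1x\sum_{x<n\le 2x}g\Bigr|
\]
being small in mean square, plus control of the long average. Two separate tasks follow.

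First, we need the long average $\frac1x\sum_{n\le x}g_m(n)=\frac1x S_{f\lambda_m}(x)$ to be $o(1)$. This follows from Theorem~\ref{thm:halasz_1bounded_mult} applied to $f\lambda_m$, since $M_m\ge$ the analogous minimum over $|t|\le\log x$ and (H2) dominates it: the minimum over the larger range $|t|\le 2x$ is smaller, so $\widetilde M_m\to\infty$ implies $M_m\to\infty$.

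Second, we must show the $t$-twisted quantity in Mangerel's error, essentially $\exp(-\rho(g_m,X)^2)$ with the distance taken against $n^{it}$ over $|t|\le X$ in the $\kappa=2$ normalisation, tends to zero. For this, note the pointwise inequality
\[
|g_m(p)|-\Re\bigl(g_m(p)p^{-it}\bigr)\ \ge\ \sum_{\mathfrak p\mid p}\bigl(1-\Re(f(\mathfrak p)\lambda_m(\mathfrak p)\Na\mathfrak p^{-it})\bigr)-\bigl(2-|g_m(p)|\bigr).
\]
The natural normalisation in Mangerel compares against $|g|$, giving $\mathbb D(g_m,n^{it})^2\ge\mathbb D_m(f;t;X)^2-O(1)$, uniformly in $|t|\le 2X$. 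I would first try to use Mangerel's version with weights $|g(p)|$, which makes this inequality directly applicable. If his statement instead uses $\kappa-\Re$, we would need (H1) to absorb the $2-|g_m(p)|$ loss. The risk is that this loss is unbounded, and handling it is where I expect the real difficulty.

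Granting this, each fixed $m$ gives variance $o(1)$. A diagonal choice of $T(X)\to\infty$ then finishes the proof.
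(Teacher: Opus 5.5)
Your architecture (angular Fourier expansion, norm-compression to $g_m$, Mangerel's theorem for each fixed mode, diagonal choice of $T$) matches the paper's. However, Step~3 is exactly where (H2) must be used, and you leave it open inside a framework that does not match Mangerel's theorem. Theorem~1.7 of Mangerel does not compare the short average with the untwisted long average plus a separate error of size $\exp(-\rho^2)$. First one verifies that $g_m\in M(Y;\alpha_m,2,1)$, using (H1) converted to rational primes via Mertens mod~$4$. One also needs $h_0=h/H(g_m;Y)\to\infty$, with the auxiliary Euler products controlled as in Lemma~\ref{lem:gm_euler_factors}. The theorem then says that, in mean square over $y\in[Y/2,Y]$ with $Y=X+h$, the average $\frac1h\sum_{y-h<n\le y}g_m(n)$ is $o(1)$-close to $\bigl(\frac1h\int_{y-h}^{y}u^{it_0}\,du\bigr)\frac2Y\sum_{Y/2<n\le Y}g_m(n)n^{-it_0}$. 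Here $t_0\in[-Y,Y]$ is a parameter you do not control. So you must show this twisted long sum is $o(Y)$ for every $|t_0|\le Y$. Your ``first task'' does not do this, because $M_m\to\infty$ only concerns $|t|\le\log x$.

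Your proposed way of doing it is to turn (H2) into a lower bound for a distance of $g_m$ from $n^{it}$ on $\N$, and this fails as stated. The inequality $\mathbb D(g_m,n^{it})^2\ge\mathbb D_m(f;t;X)^2-O(1)$ is false for the $|g|$-weighted distance. Summing your pointwise identity gives
\[
\sum_{p\le X}\frac{|g_m(p)|-\Re\bigl(g_m(p)p^{-it}\bigr)}{p}=\mathbb D_m(f;t;X)^2-\sum_{\substack{p\le X\\ p\equiv 1\ (4)}}\frac{2-|g_m(p)|}{p}+O(1),
\]
and the subtracted sum can be $\asymp\log\log X$ under (H1), e.g.\ if $|g_m(p)|=1$ for all split $p$. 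A direct $\kappa=2$ Hal\'asz bound for $g_m(n)n^{-it_0}$ on $\N$ does not rescue this either. There $M$ is about $\mathbb D_m^2+\log\log X$, so the prefactor $1+M$ costs a factor $\log\log X$ that (H2) alone cannot absorb.

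The missing idea is to bound the twisted long sum on the ideal side. Write $\sum_{n\le Z}g_m(n)n^{-it_0}=\sum_{\Na\mathfrak a\le Z}f(\mathfrak a)\lambda_m(\mathfrak a)(\Na\mathfrak a)^{-it_0}$. The function $f\lambda_m\Na^{-it_0}$ is $1$-bounded and multiplicative on ideals, so Theorem~\ref{thm:halasz_1bounded_mult} applies with $\kappa=1$ and no normalisation loss. For $Z\in\{Y/2,Y\}$ its pretentious parameter satisfies $\min_{|u|\le\log Z}\mathbb D_m(f;t_0+u;Z)^2\ge\widetilde M_m(X)-O(1)$. This holds because $|t_0+u|\le Y+\log Y\le 2X$ and $\sum_{X/2<\Na\mathfrak p\le 2X}1/\Na\mathfrak p\ll 1$. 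That is precisely why (H2) uses the range $|t|\le 2X$. Subtracting the bounds at $Z=Y$ and $Z=Y/2$ gives $\sum_{Y/2<n\le Y}g_m(n)n^{-it_0}=o(Y)$, which is the step your proposal only grants.

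A smaller point concerns Step~1. For complex-valued $f$ the Beurling--Selberg sandwich bounds the error by $\sum_{x<\Na\mathfrak a\le x+h}(M_T-m_T)(\arg\mathfrak a)$, not by $\frac1T\sum 1$. Bounding it needs a wedge count in the thin annulus and produces an additional $O(\sqrt X)$ term. This is harmless because $h/\sqrt X\to\infty$.
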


\begin{remark}
The sum $S_{f,J}(x;h)$ runs over ideals whose canonical generators lie in the annular sector
\[
\{re^{i\theta}: \theta\in J,\ \sqrt{x}<r\le \sqrt{x+h}\}.
\]
Its radial thickness is
\[
\sqrt{x+h}-\sqrt{x}=\frac{h}{\sqrt{x+h}+\sqrt{x}}\asymp \frac{h}{\sqrt{x}}.
\]
Thus the hypothesis $h/\sqrt X\to\infty$ means that this thickness tends to infinity,
with no lower rate assumed.
\end{remark}

\begin{remark}
Hypothesis \textup{(H1)} is a non-degeneracy condition on the paired values above conjugate pairs
of prime ideals. It prevents excessive cancellation within each pair and is the input used to
verify the prime lower-bound hypothesis in \cite[Theorem~1.7]{Mangerel} after norm-compression.
\end{remark}

\begin{remark}
The conclusion of Theorem~\ref{thm:sectorial_MR} implies that for all $x\in[X/2,X]$ outside a
set of Lebesgue measure $o(X)$,
\[
\frac{S_{f,J}(x;h)}{h}
=
\delta_J\,\frac{S_f(x;h)}{h}
\ +\ o(1).
\]
\end{remark}

\begin{remark}\label{rem:sectorial_MR_unrestricted}
If, in addition, the analogues of \textup{(H1)} and \textup{(H2)} hold for $m=0$, that is,
if \textup{(H1)} holds with $m=0$ and
\[
    \lim_{X\to\infty}\widetilde M_0(X)=+\infty,
\]
then
\[
\frac{2}{X}\int_{X/2}^{X}
\left|\frac{S_f(x;h)}{h}\right|^2\,dx
\ =\ o(1),
\]
and hence, for all $x\in[X/2,X]$ outside a set of Lebesgue measure $o(X)$,
\[
\frac{S_f(x;h)}{h}=o(1).
\]
\end{remark}

\subsection{Organization of the paper}
Section~\ref{sec:prelim} collects the ideal-theoretic and analytic preliminaries, including
norm-compression, Perron's formula for ideals, and the prime-ideal estimates used later.
Section~\ref{sec:ideal-halasz-proof} proves the quantitative ideal Hal\'asz theorem.
Section~\ref{sec:applications} develops the pretentious reformulations and deduces the $1$-bounded
theorem. Section~\ref{sec:sectorial_halasz} proves a quantitative sectorial Hal\'asz theorem and
deduces the qualitative corollary stated above. Section~\ref{sec:sectorial_MR} proves the
sectorial short-interval Hal\'asz theorem.

\section{Preliminaries}\label{sec:prelim}

In this section we collect the ideal-theoretic notation and analytic inputs used throughout.
We write
\[
\zeta_{\Q(i)}(s):=\sum_{\mathfrak a\in\mathcal I}\frac{1}{(\Na\mathfrak a)^s}
=\prod_{\mathfrak p}\left(1-\frac{1}{(\Na\mathfrak p)^s}\right)^{-1}
\qquad(\Re(s)>1).
\]

\subsection{Prime ideals and Dirichlet convolution on $\Z[i]$}

We recall the factorization of rational primes in $\Z[i]$:
\begin{enumerate}
\item $2$ is ramified:
\[
(2)=\mathfrak p_2^2
\qquad\text{with}\qquad
\Na\mathfrak p_2=2.
\]
\item If $p$ is a positive rational prime with $p\equiv 1\pmod 4$, then $p$ splits:
\[
(p)=\mathfrak p\,\overline{\mathfrak p},
\qquad
\mathfrak p\neq \overline{\mathfrak p},
\qquad
\Na\mathfrak p=\Na\overline{\mathfrak p}=p.
\]
\item If $p$ is a positive rational prime with $p\equiv 3\pmod 4$, then $p$ is inert:
\[
(p)\ \text{is prime},\qquad \Na((p))=p^2.
\]
\end{enumerate}
Thus the prime ideals are as follows: the ramified prime above $2$, of norm $2$; two prime
ideals of norm $p$ for each positive rational prime $p\equiv 1\pmod 4$; and one prime ideal of norm $p^2$
for each positive rational prime $p\equiv 3\pmod 4$.

\begin{definition}[Dirichlet convolution on ideals]\label{def:ideal-convolution}
If $f,g:\mathcal I\to\C$ are arithmetic functions on nonzero ideals, define
\[
(f*g)(\mathfrak a):=\sum_{\mathfrak d\mid \mathfrak a} f(\mathfrak d)\,g(\mathfrak a/\mathfrak d).
\]
\end{definition}

\subsection{Prime-ideal estimates}

Because
\[
\zeta_{\Q(i)}(s)=\zeta(s)L(s,\chi_{-4}),
\]
estimates for prime ideals in $\Z[i]$ reduce to the classical prime number theorem and Mertens-type
estimates for primes in the residue classes $1$ and $3 \pmod 4$. We record the two consequences
needed later; see, for instance, \cite[Ch.~5]{IK}.

\begin{definition}[Ideal von Mangoldt function]\label{def:ideal-vonmangoldt}
Define
\[
\Lambda(\mathfrak a)=
\begin{cases}
\log \Na\mathfrak p & \text{if }\mathfrak a=\mathfrak p^m \text{ for some prime ideal }\mathfrak p,\\
0 & \text{otherwise.}
\end{cases}
\]
Also write
\[
\psi_{\Q(i)}(x):=\sum_{\Na\mathfrak a\le x}\Lambda(\mathfrak a).
\]
\end{definition}

\begin{lemma}[prime ideal theorem bound]\label{lem:PIT}
    For all $x\ge 2$
 \[\psi_{\Q(i)}(x)\ll x\] 
\end{lemma}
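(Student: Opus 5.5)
The plan is to reduce the bound to Chebyshev's bound over $\Z$ by comparing $\psi_{\Q(i)}$ with the classical function $\psi(y)=\sum_{n\le y}\Lambda(n)$, using the splitting law for rational primes recalled above. Every prime ideal $\mathfrak p$ lies above a unique rational prime $p$. For each such $p$ there are at most two prime ideals above it, and $\Na\mathfrak p\in\{p,p^2\}$.

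First I sort the contributions to $\psi_{\Q(i)}(x)$ by the rational prime below. Consider a prime power $\mathfrak p^m$ with $\Na\mathfrak p=p$; this happens when $p=2$ or $p\equiv1\pmod 4$. Then $\Lambda(\mathfrak p^m)=\log p$ and $\Na(\mathfrak p^m)=p^m$. At most two prime ideals have norm $p$, so these terms contribute at most $2\sum_{p^m\le x}\log p=2\psi(x)$.

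Now consider an inert prime $p\equiv 3\pmod 4$, with $\mathfrak p=(p)$ and $\Na\mathfrak p=p^2$. Then $\Lambda(\mathfrak p^m)=\log p^2=2\log p$, and the condition becomes $p^{2m}\le x$. These terms sum to at most $2\sum_{p^{k}\le x,\,k \text{ even}}\log p\le 2\psi(x)$.

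Combining the two cases gives $\psi_{\Q(i)}(x)\le 4\psi(x)$. Chebyshev's bound $\psi(x)\ll x$ then completes the argument; it is elementary, or can be taken from \cite[Ch.~5]{IK}.

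There is no real obstacle here. The only point needing care is the bookkeeping: the ideal von Mangoldt function assigns $\log\Na\mathfrak p$ rather than $\log p$ to inert primes. This is absorbed by the factor $2$ above, since $\log\Na\mathfrak p=2\log p$ in that case.

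Alternatively, one could note that $-\zeta_{\Q(i)}'/\zeta_{\Q(i)}=-\zeta'/\zeta-L'/L(\cdot,\chi_{-4})$. This identity shows that $\psi_{\Q(i)}(x)=\psi(x)+\psi(x,\chi_{-4})$, which is $\le 2\psi(x)$. It gives the same bound even more directly, and is consistent with the factorization $\zeta_{\Q(i)}=\zeta L(\cdot,\chi_{-4})$ recorded above.
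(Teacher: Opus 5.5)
Your argument is correct and is essentially the paper's proof: both use the splitting law to sort prime-ideal powers by the rational prime below, and bound the split/ramified and inert contributions by classical Chebyshev sums. The paper bounds the inert part by $\psi(\sqrt x)$ rather than $\psi(x)$, which is immaterial. Your alternative identity $\psi_{\Q(i)}(x)=\psi(x)+\psi(x,\chi_{-4})$ is also valid and gives the slightly sharper bound $\psi_{\Q(i)}(x)\le 2\psi(x)$.
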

\begin{proof}
Using the factorization of rational primes in \(\mathbb Z[i]\), we have
\[
\psi_{\mathbb Q(i)}(x)
\ll
1
+2\sum_{\substack{p^k\le x\\ p\equiv 1\pmod 4}}\log p
+2\sum_{\substack{p^{2k}\le x\\ p\equiv 3\pmod 4}}\log p
+\log x .
\]
The first sum is \(O(\psi(x))\), and the second is \(O(\psi(\sqrt x))\), where
\(\psi\) is the classical Chebyshev function. Since \(\psi(u)\ll u\), it follows that
\[
\psi_{\mathbb Q(i)}(x)\ll x+\sqrt x+\log x\ll x .
\]
\end{proof}
\begin{lemma}\label{lem:Mertens}
For $x\ge 3$,
\[
\sum_{\Na\mathfrak p\le x}\frac{1}{\Na\mathfrak p}=\log\log x+O(1).
\]
Consequently, for $3\le y\le x$,
\[
\sum_{y<\Na\mathfrak p\le x}\frac{1}{\Na\mathfrak p}=\log\log x-\log\log y+O(1),
\]
and
\[
\prod_{\Na\mathfrak p\le x}\left(1-\frac{1}{\Na\mathfrak p}\right)^{-1}\ll \log x.
\]
In particular, for any $\kappa\ge 1$ and $y\ge 3$,
\[
\prod_{\Na\mathfrak p\le y}\left(1-\frac{1}{\Na\mathfrak p}\right)^{-\kappa}\ll_{\kappa} (\log y)^{\kappa}.
\]
\end{lemma}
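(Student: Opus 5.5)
Since $\zeta_{\Q(i)}(s)=\zeta(s)L(s,\chi_{-4})$, the plan is to reduce each assertion of Lemma~\ref{lem:Mertens} to classical Mertens-type estimates over $\N$. I would do this through the explicit description of prime ideals recorded above.

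First, I would split the sum over $\Na\mathfrak p\le x$ according to the three splitting types. The ramified prime $\mathfrak p_2$ contributes $1/2=O(1)$. The inert primes have $\Na\mathfrak p=p^2$, so they contribute $\sum_{p\equiv 3\ (4)}p^{-2}=O(1)$. The split primes give two ideals of norm $p$ for each $p\equiv 1\pmod 4$ with $p\le x$, so they contribute
\[
2\sum_{\substack{p\le x\\ p\equiv 1\pmod 4}}\frac1p .
\]
By Mertens' theorem in arithmetic progressions (a consequence of the prime number theorem for $\chi_{-4}$, or more simply of $L(1,\chi_{-4})\ne0$; see \cite[Ch.~5]{IK}), we have $\sum_{p\le x,\,p\equiv1(4)}1/p=\tfrac12\log\log x+O(1)$. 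Hence the total is $\log\log x+O(1)$.

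Second, the difference formula for $3\le y\le x$ follows immediately by subtracting the two instances of the first estimate. For the Euler product I would take logarithms:
\[
-\sum_{\Na\mathfrak p\le x}\log\Bigl(1-\frac1{\Na\mathfrak p}\Bigr)=\sum_{\Na\mathfrak p\le x}\frac1{\Na\mathfrak p}+O\Bigl(\sum_{\mathfrak p}\frac1{(\Na\mathfrak p)^2}\Bigr).
\]
The error sum converges, since at most two prime ideals have any given norm $n$, which bounds it by $2\sum_n n^{-2}$. The right-hand side is therefore $\log\log x+O(1)$, and exponentiating gives the bound $\ll\log x$. Raising to the power $\kappa$ gives the final claim with constant $e^{O(\kappa)}$, valid for $y\ge3$.

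The only substantive input is the progression version of Mertens. If one wants to avoid quoting it, I would instead derive it from partial summation applied to the ideal prime number theorem $\psi_{\Q(i)}(x)=x+O(x/\log^2x)$, which itself follows from the standard zero-free region for $\zeta(s)L(s,\chi_{-4})$. Alternatively, the weaker Chebyshev bound of Lemma~\ref{lem:PIT}, combined with $\sum_{\Na\mathfrak a\le x}1/\Na\mathfrak a=\frac{\pi}{4}\log x+O(1)$ and the standard Mertens argument via $\log$ of factorials over ideals, also suffices. I expect no genuine obstacle beyond this bookkeeping.
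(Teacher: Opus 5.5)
Your proposal is correct and follows essentially the same route as the paper. Both split the prime ideals by splitting type, bound the ramified and inert contributions by $O(1)$, apply Mertens' theorem for primes $p\equiv 1\pmod 4$ to the split primes, obtain the difference formula by subtraction, and get the Euler product bound by taking logarithms with $\log(1-u)^{-1}=u+O(u^2)$.
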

\begin{proof}
By the splitting law,
\[
\sum_{\mathrm N\mathfrak p\le x}\frac1{\mathrm N\mathfrak p}
=
2\sum_{\substack{p\le x\\ p\equiv1\pmod4}}\frac1p
+
\sum_{\substack{p^2\le x\\ p\equiv3\pmod4}}\frac1{p^2}
+O(1).
\]
The second sum is \(O(1)\), while Mertens' theorem in arithmetic progressions gives
\[
\sum_{\substack{p\le x\\ p\equiv1\pmod4}}\frac1p
=
\frac12\log\log x+O(1).
\]
This proves the first assertion. The estimate for \(y<N\mathfrak p\le x\) follows by
subtraction, and the Euler product bound follows by taking logarithms and using
\(\log(1-u)^{-1}=u+O(u^2)\).
\end{proof}
\subsection{A Brun--Titchmarsh bound modulo $4$}

\begin{lemma}\label{lem:BT_mod4}
Let $U\ge 3$ and $2\le H\le U$.
Then
\[
\pi(U+H;4,1)-\pi(U;4,1)\ll \frac{H}{\varphi(4)\log(2H)}\ll \frac{H}{\log(2H)}.
\]
Consequently,
\[
\sum_{\substack{U<p\le U+H\\ p\equiv 1\ (\mathrm{mod}\ 4)}}\log p
\ll \frac{H\log U}{\log(2H)}.
\]
\end{lemma}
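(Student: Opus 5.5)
This is the classical Brun--Titchmarsh inequality for the modulus $q=4$; the plan is to derive it from the upper-bound sieve and then pass to the weighted sum by partial summation.

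For the first bound, apply the Selberg sieve (or Brun's sieve) to the set
\[
\mathcal A=\{n\in(U,U+H]: n\equiv 1 \pmod 4\},
\qquad |\mathcal A|=\frac{H}{4}+O(1).
\]
Sift $\mathcal A$ by the odd primes $p\le z$. For each such $p$, the condition $p\mid n$ removes one residue class modulo $4p$. Hence the local densities are $\omega(p)/p=1/p$, and the remainders are $O(1)$.

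The Selberg upper bound sieve then gives
\[
S(\mathcal A,z)\le \frac{H/4}{\sum_{d\le z,\ (d,2)=1}\mu^2(d)/\varphi(d)}+O(z^2).
\]
The sum over $d$ is $\gg \log z$, because restricting to odd $d$ loses only a constant factor.

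Choose $z=H^{1/2}/(\log 2H)$, so that the remainder term $O(z^2)$ is $\ll H/\log(2H)$. Every prime $p\equiv 1\pmod 4$ in $(U,U+H]$ exceeds $U\ge H\ge z$, so it survives the sieve and is counted by $S(\mathcal A,z)$. This gives
\[
\pi(U+H;4,1)-\pi(U;4,1)\ll \frac{H}{\log(2H)}.
\]
The factor $\varphi(4)=2$ in the statement is a constant and is absorbed into the implied constant. For very small $H$ (say $H\le 100$), the bound is trivial, since the count is at most $H+1\ll H/\log(2H)$. Alternatively, one may cite the Montgomery--Vaughan large-sieve form
\[
\pi(x+y;q,a)-\pi(x;q,a)\le \frac{2y}{\varphi(q)\log(y/q)}\qquad (y>q),
\]
directly with $q=4$, handling $H\le 16$ trivially.

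For the weighted sum, each prime $p\in(U,U+H]$ satisfies $\log p\le \log(U+H)\le \log(2U)\ll \log U$, since $U\ge 3$. Multiplying the count bound by $\max\log p$ therefore gives
\[
\sum_{\substack{U<p\le U+H\\ p\equiv 1\ (\mathrm{mod}\ 4)}}\log p\ll \frac{H\log U}{\log(2H)}.
\]

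The only step needing any care is getting the sieve bound uniformly down to small $H$, and this is handled by the trivial bound for bounded $H$; the argument is otherwise standard.
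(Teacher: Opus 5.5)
Your proposal is correct and is essentially the paper's argument: the paper simply cites the Brun--Titchmarsh theorem in arithmetic progressions and then uses $\log p\le\log(2U)\ll\log U$ for the weighted bound, while your Selberg-sieve computation (or the Montgomery--Vaughan citation) just unpacks that theorem for $q=4$. One cosmetic correction: the trivial range should be $H\le H_0$ for a sufficiently large absolute constant $H_0$, not $H\le 100$, so that $z=H^{1/2}/\log(2H)$ satisfies $\log z\gg\log H$ on the remaining range.
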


\begin{proof}
Lemma~\ref{lem:BT_mod4} is a routine consequence of the Brun--Titchmarsh theorem in arithmetic
progressions; see, e.g., \cite[Ch.~18]{IK}.
\end{proof}

\subsection{Norm-compression to arithmetic functions}

\begin{definition}[Norm-compression]\label{def:norm-compression}
For any function $h:\mathcal I\to\C$, define $h^*:\N\to\C$ by
\[
h^*(n):=\sum_{\Na\mathfrak a=n} h(\mathfrak a)
\qquad (n\ge 1).
\]
\end{definition}

\begin{lemma}\label{lem:norm-compression}
Let $h:\mathcal I\to\C$. Then:
\begin{enumerate}
\item[(i)] for every $x\ge 1$,
\[
\sum_{\Na\mathfrak a\le x} h(\mathfrak a)=\sum_{n\le x} h^*(n);
\]
\item[(ii)] if 
\[
\sum_{\mathfrak a\in\mathcal I}\frac{h(\mathfrak a)}{(\Na\mathfrak a)^s}
\]
converges absolutely, then
\[
\sum_{\mathfrak a\in\mathcal I}\frac{h(\mathfrak a)}{(\Na\mathfrak a)^s}
=\sum_{n\ge 1}\frac{h^*(n)}{n^s};
\]
\item[(iii)] if $h$ is multiplicative on ideals, then $h^*$ is multiplicative on $\N$.
\end{enumerate}
\end{lemma}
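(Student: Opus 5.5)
(i) and (ii) follow by regrouping terms according to the value of the norm. (iii) follows from unique factorization of ideals together with multiplicativity of the norm.

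For (i), I will partition the finite set $\{\mathfrak a\in\mathcal I:\Na\mathfrak a\le x\}$ according to the value $n=\Na\mathfrak a\in\{1,\dots,\lfloor x\rfloor\}$. Each class $\{\mathfrak a:\Na\mathfrak a=n\}$ is finite, since only finitely many ideals have a given norm; for instance each contains an element of norm dividing $n^2$, hence of bounded size. Summing $h$ over each class gives $h^*(n)$, and summing over $n\le x$ gives the identity.

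For (ii), absolute convergence permits arbitrary rearrangement and grouping of the series. Grouping the terms $h(\mathfrak a)(\Na\mathfrak a)^{-s}$ by the value $n$ of the norm yields $\sum_n n^{-s}\sum_{\Na\mathfrak a=n}h(\mathfrak a)=\sum_n h^*(n)n^{-s}$. The right side converges absolutely as well, because $|h^*(n)|\le\sum_{\Na\mathfrak a=n}|h(\mathfrak a)|$.

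For (iii), first note $h^*(1)=h(\Z[i])=1$, since $\Z[i]$ is the only ideal of norm $1$. Now let $(m,n)=1$. The key claim is that the map
\[
\{\mathfrak b:\Na\mathfrak b=m\}\times\{\mathfrak c:\Na\mathfrak c=n\}\to\{\mathfrak a:\Na\mathfrak a=mn\},\qquad (\mathfrak b,\mathfrak c)\mapsto\mathfrak b\mathfrak c,
\]
is a bijection, and that its image pairs are coprime.

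\emph{Coprimality.} If a prime ideal $\mathfrak p$ divided both $\mathfrak b$ and $\mathfrak c$, then $\Na\mathfrak p$ would divide both $m$ and $n$. Since $\Na\mathfrak p>1$, this contradicts $(m,n)=1$.

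\emph{Surjectivity.} Given $\mathfrak a$ with $\Na\mathfrak a=mn$, factor $\mathfrak a=\prod\mathfrak p^{e_{\mathfrak p}}$. Each $\Na\mathfrak p$ is a power of the single rational prime lying below $\mathfrak p$. Let $\mathfrak b$ be the product of the prime-power factors whose rational prime divides $m$, and $\mathfrak c$ the product of the rest. By multiplicativity of the norm, $\Na\mathfrak b$ is the $m$-part of $mn$, namely $m$, and likewise $\Na\mathfrak c=n$.

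\emph{Injectivity.} This follows from uniqueness of prime ideal factorization, since the splitting of $\mathfrak a$ into $\mathfrak b$ and $\mathfrak c$ is forced.

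Given the bijection,
\[
h^*(mn)=\sum_{\mathfrak b,\mathfrak c}h(\mathfrak b\mathfrak c)=\sum_{\mathfrak b,\mathfrak c}h(\mathfrak b)h(\mathfrak c)=h^*(m)h^*(n),
\]
using multiplicativity of $h$ on the coprime pairs $(\mathfrak b,\mathfrak c)$. The only step requiring care is this bijection, and it rests on the standard fact that every prime ideal lies over a unique rational prime. That fact is visible from the splitting description in Section~\ref{sec:prelim}, so I expect no real obstacle.
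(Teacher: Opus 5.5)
Your proposal is correct and follows the paper's proof: you get (i) and (ii) by grouping terms according to $n=\Na\mathfrak a$, and (iii) from the unique factorization of an ideal of norm $mn$, with $(m,n)=1$, as $\mathfrak b\mathfrak c$ where $\Na\mathfrak b=m$ and $\Na\mathfrak c=n$. You also make explicit two points the paper leaves implicit: that such $\mathfrak b,\mathfrak c$ are coprime, which is what licenses $h(\mathfrak b\mathfrak c)=h(\mathfrak b)h(\mathfrak c)$, and that $h^*(1)=1$.
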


\begin{proof}
The first two identities follow by grouping terms according to the integer norm $n=\Na\mathfrak a$.
If $(m,n)=1$, then every ideal of norm $mn$ factors uniquely as $\mathfrak a\mathfrak b$ with
$\Na\mathfrak a=m$ and $\Na\mathfrak b=n$. Hence, if $h$ is multiplicative,
\[
h^*(mn)=\sum_{\Na\mathfrak c=mn} h(\mathfrak c)
=\sum_{\substack{\Na\mathfrak a=m\\ \Na\mathfrak b=n}} h(\mathfrak a\mathfrak b)
=\sum_{\substack{\Na\mathfrak a=m\\ \Na\mathfrak b=n}} h(\mathfrak a)h(\mathfrak b)
=h^*(m)h^*(n).
\]
\end{proof}

\subsection{Perron's formula}

We use Perron's formula for Dirichlet series whose coefficients are indexed by ideals and
ordered by norm. This is obtained by grouping the coefficients according to their integer norm,
as in Lemma~\ref{lem:norm-compression}, and then applying the usual Perron formula over
\(\N\). We record both the infinite and truncated forms: the former is used for exact integral
identities, while the latter is used for quantitative truncations.

\begin{theorem}\label{thm:perron}
Let \(A(s)=\sum_{\mathfrak a} c(\mathfrak a)(\Na\mathfrak a)^{-s}\) be absolutely convergent
for \(\Re s>\sigma_0\). Then, for \(x\ge 1\) not equal to the norm of an ideal and any
\(\sigma>\sigma_0\),
\[
\sum_{\Na\mathfrak a\le x} c(\mathfrak a)
=
\frac{1}{2\pi i}\int_{\sigma-i\infty}^{\sigma+i\infty} A(s)\frac{x^s}{s}\,ds.
\]
\end{theorem}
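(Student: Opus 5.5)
The plan is to reduce Theorem~\ref{thm:perron} to the classical Perron formula over $\N$ by norm-compression. Set $c^*(n):=\sum_{\Na\mathfrak a=n}c(\mathfrak a)$ as in Definition~\ref{def:norm-compression}. Each fibre $\{\mathfrak a:\Na\mathfrak a=n\}$ is finite; for $\Z[i]$ its size is at most $\tau(n)$. Since $\sum_{\mathfrak a}|c(\mathfrak a)|(\Na\mathfrak a)^{-\sigma}<\infty$ for $\sigma>\sigma_0$, the same grouping argument as in Lemma~\ref{lem:norm-compression}(ii) gives
\[
\sum_{n\ge1}\frac{|c^*(n)|}{n^{\sigma}}\le\sum_{\mathfrak a}\frac{|c(\mathfrak a)|}{(\Na\mathfrak a)^{\sigma}}<\infty .
\]
Hence the Dirichlet series $\sum_n c^*(n)n^{-s}$ converges absolutely for $\Re s>\sigma_0$ and equals $A(s)$ there. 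By Lemma~\ref{lem:norm-compression}(i), $\sum_{\Na\mathfrak a\le x}c(\mathfrak a)=\sum_{n\le x}c^*(n)$. If $x$ is not the norm of an ideal, then either $x\notin\N$, or $x=n$ with $c^*(n)=0$ because the fibre is empty. In both cases no boundary term of weight $\tfrac12$ arises.

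It then remains to prove, or quote, the classical Perron formula for $\sum_n b_n n^{-s}$ with absolute convergence at $\sigma$. I will state it in its symmetric-limit form $\lim_{T\to\infty}\int_{\sigma-iT}^{\sigma+iT}$, since $A(s)x^s/s$ need not be absolutely integrable on the vertical line. Concretely, I will take the truncated version
\[
\frac1{2\pi i}\int_{\sigma-iT}^{\sigma+iT}\frac{y^s}{s}\,ds=\mathbf 1_{y>1}+O\!\Bigl(\frac{y^\sigma}{T|\log y|}\Bigr)\qquad (y\neq1),
\]
which follows from shifting contours to $\Re s\to\pm\infty$. Applying it with $y=x/n$ and summing against $b_n=c^*(n)$, which is justified by absolute convergence, leaves an error
\[
\frac{x^\sigma}{T}\sum_n\frac{|c^*(n)|}{n^\sigma|\log(x/n)|}.
\]
For $x$ fixed and not an integer with $c^*(x)\ne0$, the quantities $|\log(x/n)|$ are bounded below uniformly over the $n$ with $c^*(n)\ne0$: the only $n$ that can approach $x$ is $n=x$, and that term is excluded. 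So the error tends to $0$ as $T\to\infty$.

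The main obstacle is this uniform lower bound on $|\log(x/n)|$ near $n\approx x$. I will handle it by splitting off the finitely many $n$ with $|n-x|<1$, for which $|\log(x/n)|\gg\|x\|/x>0$ or the coefficient vanishes. For the remaining $n$, $|\log(x/n)|\gg 1/x$, so the sum is $O_x(1)$ by absolute convergence.
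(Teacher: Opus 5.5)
Your proposal is correct and follows the paper's route. Like the paper, you group the coefficients by norm into $c^*(n)$ via Lemma~\ref{lem:norm-compression} and apply the classical Perron formula over $\N$; you also write out the truncated-Perron limit $T\to\infty$ and the symmetric-limit reading of the integral, which the paper simply quotes. One caveat, which the paper's statement shares: the evaluation $\frac{1}{2\pi i}\int_{\sigma-iT}^{\sigma+iT} y^s s^{-1}\,ds\to\mathbf 1_{y>1}$ by contour shifting needs $\sigma>0$ as well as $\sigma>\sigma_0$.
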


\begin{proof}
For \(n\ge 1\), set
\[
c^*(n):=\sum_{\Na\mathfrak a=n}c(\mathfrak a).
\]
By Lemma~\ref{lem:norm-compression},
\[
A(s)=\sum_{n\ge 1}\frac{c^*(n)}{n^s}
\]
in the half-plane of absolute convergence. The result is the usual Perron formula applied
to the coefficients \(c^*(n)\), rewritten in terms of ideals.
\end{proof}

\begin{lemma}\label{lem:perron-quant}
Let \(A(s)=\sum_{\mathfrak a} c(\mathfrak a)(\Na\mathfrak a)^{-s}\), and suppose that
\[
\sum_{\mathfrak a}|c(\mathfrak a)|(\Na\mathfrak a)^{-\sigma}<\infty
\]
for some \(\sigma>0\). Let \(x\ge 2\), \(T\ge 1\). Then
\[
\begin{aligned}
\sum_{\Na\mathfrak a\le x}c(\mathfrak a)
&=
\frac{1}{2\pi i}\int_{\sigma-iT}^{\sigma+iT}A(s)\frac{x^s}{s}\,ds \\
&\quad+
O\!\left(
\sum_{\mathfrak a}|c(\mathfrak a)|
\left(\frac{x}{\Na\mathfrak a}\right)^\sigma
\min\!\left\{1,\frac{1}{T|\log(x/\Na\mathfrak a)|}\right\}
\right).
\end{aligned}
\]
\end{lemma}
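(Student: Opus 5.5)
The plan is to deduce Lemma~\ref{lem:perron-quant} from the classical truncated Perron formula over $\N$ by norm-compression, exactly as Theorem~\ref{thm:perron} was obtained. Set $c^*(n):=\sum_{\Na\mathfrak a=n}c(\mathfrak a)$. By Lemma~\ref{lem:norm-compression}(ii), $A(s)=\sum_{n\ge1}c^*(n)n^{-s}$ on the line $\Re s=\sigma$, where this series converges absolutely because
\[
\sum_{n\ge 1}|c^*(n)|n^{-\sigma}\le\sum_{\mathfrak a}|c(\mathfrak a)|(\Na\mathfrak a)^{-\sigma}<\infty .
\]
By Lemma~\ref{lem:norm-compression}(i), the left-hand side equals $\sum_{n\le x}c^*(n)$.

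I would then quote the standard effective Perron formula (e.g.\ \cite[Ch.~5]{IK}), or reprove it term by term. For $y>0$, $y\ne1$, $\sigma>0$ and $T\ge1$, one has the classical estimate
\[
\frac1{2\pi i}\int_{\sigma-iT}^{\sigma+iT}\frac{y^s}{s}\,ds=\mathbf 1_{y>1}+O\!\left(y^\sigma\min\!\left\{1,\frac1{T|\log y|}\right\}\right).
\]
This is proved by shifting the contour to $\Re s\to-\infty$ when $y>1$ and to $\Re s\to+\infty$ when $y<1$. The horizontal segments give $O(y^\sigma/(T|\log y|))$. The bound $O(y^\sigma)$ comes from replacing the segment by a circular arc of radius $|\sigma+iT|$. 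The case $y=1$ also fits, since the integral equals $\tfrac12+O(\sigma/T)$, which is $\mathbf 1_{y>1}+O(1)$.

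Apply this with $y=x/n$, multiply by $c^*(n)$, and sum over $n$. Absolute convergence on the compact segment justifies interchanging the sum and the integral. The result is
\[
\sum_{n\le x}c^*(n)=\frac1{2\pi i}\int_{\sigma-iT}^{\sigma+iT}A(s)\frac{x^s}{s}\,ds+O\Big(\sum_n|c^*(n)|(x/n)^\sigma\min\{1,\tfrac1{T|\log(x/n)|}\}\Big).
\]
Finally, $|c^*(n)|\le\sum_{\Na\mathfrak a=n}|c(\mathfrak a)|$, so the error term is bounded by the ideal-indexed sum in the statement. When $x$ is an integer norm the indicator is ambiguous, but that discrepancy is absorbed by the term with $\min\{\cdot\}=1$ at $n=x$, as $(x/n)^\sigma=1$ there.

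The only step needing care is the uniform one-variable Perron estimate, in particular uniformity in $y$ near $1$ and in $\sigma$. This is classical, so I expect no real obstacle; the remaining work is bookkeeping for the passage from $c^*$ back to $c$, which only ever increases the error term.
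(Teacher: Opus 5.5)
Your proposal is correct and follows the paper's own proof: both compress by norm to $c^*(n)=\sum_{\Na\mathfrak a=n}c(\mathfrak a)$ and then apply the classical truncated Perron formula to $\sum_n c^*(n)n^{-s}$. The only differences are that you sketch the one-variable estimate yourself where the paper cites it, and you spell out two points the paper leaves implicit: the bound $|c^*(n)|\le\sum_{\Na\mathfrak a=n}|c(\mathfrak a)|$ that returns the error term to ideals, and the handling of the boundary case $x=\Na\mathfrak a$.
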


\begin{remark}\label{rem:perron-boundary}
In Lemma~\ref{lem:perron-quant}, the factor
\[
 \min\left\{1,\frac{1}{T|\log(x/\Na\mathfrak a)|}\right\}
\]
is interpreted as \(1\) when \(x=\Na\mathfrak a\). With this convention the
stated truncated formula is valid for all \(x\geq 2\). Equivalently, in applications where
one wants to avoid boundary values, if \(x\) is the norm of an ideal one may apply the
lemma with \(x+\tfrac12\) in place of \(x\). Since ideal norms are integers, this does
not change the sum \(\sum_{\Na\mathfrak a\leq x}c(\mathfrak a)\), and the resulting
change in the error terms is absorbed in the estimates used below.
\end{remark}

\begin{proof}
Let
\[
c^*(n):=\sum_{\Na\mathfrak a=n}c(\mathfrak a).
\]
Apply the classical truncated Perron formula to the Dirichlet series
\[
A(s)=\sum_{n\ge1}\frac{c^*(n)}{n^s}.
\]
See \cite[\S2.4.2, (PerFT~2.4.5)]{GS}. Rewriting the result in terms of ideals gives the
stated formula.
\end{proof}

\section{Proof of the ideal Hal\'asz theorem}\label{sec:ideal-halasz-proof}
In this section we prove Theorem~\ref{thm:ideal-halasz-M}. The proof follows the
Perron--triple-convolution argument of Granville--Harper--Soundararajan, in the
form presented in~\cite[\S\S2.4--2.5]{GS}, with ideal sums grouped by norm using
Lemma~\ref{lem:norm-compression}. We first introduce the
\(y\)-friable/\(y\)-rough decomposition \(f=s\ast\ell\), which separates the
small-prime Euler factors from the part controlled by logarithmic derivatives.
Perron's formula then gives a truncated integral representation for the summatory
function of \(f\).

The estimates needed to control this integral are the ideal-prime Mertens estimate
from Lemma~\ref{lem:Mertens}, the divisor-type majorant in
Proposition~\ref{prop:lambda_implication}, and a short-interval bound for the
ideal von Mangoldt function, proved below as Lemma~\ref{lem:ideal-vm-short-interval}.
These inputs lead to the integral-form bound in
Proposition~\ref{prop:ideal-halasz-integral}, from which
Theorem~\ref{thm:ideal-halasz-M} follows.
\subsection{Setup for the Hal\'asz proof}

We now introduce the notation and auxiliary bounds used in the proof of
Theorem~\ref{thm:ideal-halasz-M}. The first is the decomposition of $f$ into its
$y$-friable and $y$-rough parts.

\begin{definition}\label{def:smooth-rough}
Let $y\ge 2$.
\begin{enumerate}
\item An ideal $\mathfrak a\in\mathcal I$ is $y$-friable if every prime ideal $\mathfrak p\mid \mathfrak a$ satisfies $\Na\mathfrak p\le y$.
\item An ideal $\mathfrak a\in\mathcal I$ is $y$-rough if every prime ideal $\mathfrak p\mid \mathfrak a$ satisfies $\Na\mathfrak p> y$.
\end{enumerate}
Given multiplicative $f$, define
\[
 s(\mathfrak a):=\begin{cases} f(\mathfrak a)&\text{if $\mathfrak a$ is $y$-friable,}\\ 0&\text{otherwise,}\end{cases}
\qquad
 \ell(\mathfrak a):=\begin{cases} f(\mathfrak a)&\text{if $\mathfrak a$ is $y$-rough,}\\ 0&\text{otherwise.}\end{cases}
\]
\end{definition}

For $\Re(s)>1$, set
\[
F(s)=\sum_{\mathfrak a\in\mathcal I}\frac{f(\mathfrak a)}{(\Na\mathfrak a)^s},\quad
S(s)=\sum_{\mathfrak a\in\mathcal I}\frac{s(\mathfrak a)}{(\Na\mathfrak a)^s},\quad
L(s)=\sum_{\mathfrak a\in\mathcal I}\frac{\ell(\mathfrak a)}{(\Na\mathfrak a)^s}.
\]
Define $\Lambda_\ell(\mathfrak a)$ by
\[
-\frac{L'}{L}(s)=\sum_{\mathfrak a\in\mathcal I}\frac{\Lambda_\ell(\mathfrak a)}{(\Na\mathfrak a)^s}
\qquad(\Re(s)>1).
\]
Thus $\Lambda_\ell(\mathfrak a)=0$ unless $\mathfrak a$ is a prime-power ideal. Under the
hypotheses of Theorem~\ref{thm:ideal-halasz-M},
\[
\Lambda_\ell(\mathfrak p^k)=
\begin{cases}
\Lambda_f(\mathfrak p^k),& \Na\mathfrak p>y,\\
0,& \Na\mathfrak p\le y,
\end{cases}
\qquad(k\ge 1),
\]
and hence $|\Lambda_\ell(\mathfrak a)|\le \kappa\Lambda(\mathfrak a)$.

The next lemma records the factorization of $f$ into its $y$-friable and $y$-rough parts.

\begin{lemma}\label{lem:convolution}
With $s,\ell$ as in Definition~\ref{def:smooth-rough}, one has
\[
f=s*\ell
\]
on $\mathcal I$, where $*$ denotes the Dirichlet convolution from
Definition~\ref{def:ideal-convolution}.
\end{lemma}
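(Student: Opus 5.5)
The plan is to use unique factorization of ideals in $\Z[i]$ (a PID) to split each ideal into a $y$-friable part and a $y$-rough part, and then check that the convolution $(s*\ell)(\mathfrak a)$ collapses to exactly one term.

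\textbf{Step 1: the factorization.} Fix $\mathfrak a\in\mathcal I$ and write
\[
\mathfrak a=\prod_{\mathfrak p}\mathfrak p^{v_{\mathfrak p}(\mathfrak a)}.
\]
Set
\[
\mathfrak a_{\le y}:=\prod_{\Na\mathfrak p\le y}\mathfrak p^{v_{\mathfrak p}(\mathfrak a)},
\qquad
\mathfrak a_{>y}:=\prod_{\Na\mathfrak p> y}\mathfrak p^{v_{\mathfrak p}(\mathfrak a)}.
\]
Then $\mathfrak a=\mathfrak a_{\le y}\,\mathfrak a_{>y}$, where the first factor is $y$-friable, the second is $y$-rough, and the two are coprime because they share no prime ideal.

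\textbf{Step 2: only one divisor contributes.} Expand
\[
(s*\ell)(\mathfrak a)=\sum_{\mathfrak d\mid\mathfrak a}s(\mathfrak d)\,\ell(\mathfrak a/\mathfrak d).
\]
A term can be nonzero only if $\mathfrak d$ is $y$-friable and $\mathfrak a/\mathfrak d$ is $y$-rough. In that case, for each prime $\mathfrak p$ with $\Na\mathfrak p>y$ we have $v_{\mathfrak p}(\mathfrak d)=0$, so $v_{\mathfrak p}(\mathfrak a/\mathfrak d)=v_{\mathfrak p}(\mathfrak a)$. For each prime with $\Na\mathfrak p\le y$ we have $v_{\mathfrak p}(\mathfrak a/\mathfrak d)=0$, so $v_{\mathfrak p}(\mathfrak d)=v_{\mathfrak p}(\mathfrak a)$. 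Hence the only possible choice is $\mathfrak d=\mathfrak a_{\le y}$, and this divisor does occur.

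\textbf{Step 3: evaluate the surviving term.} We get
\[
(s*\ell)(\mathfrak a)=f(\mathfrak a_{\le y})\,f(\mathfrak a_{>y})=f(\mathfrak a),
\]
using multiplicativity of $f$ on the coprime pair $\mathfrak a_{\le y},\mathfrak a_{>y}$. The edge cases are covered by $f(\Z[i])=1$: when one factor is the unit ideal, that ideal counts as both friable and rough, so the formula still holds.

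There is no real obstacle here. The only point that needs care is the uniqueness in Step 2, and that follows directly from unique factorization into prime ideals.
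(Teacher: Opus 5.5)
Your proof is correct and follows essentially the same route as the paper: split $\mathfrak a$ into its $y$-friable and $y$-rough parts, observe that only $\mathfrak d=\mathfrak a_{\le y}$ gives a nonzero term, and conclude by multiplicativity of $f$ on this coprime pair. Your Step 2 spells out the uniqueness via valuations more explicitly than the paper does, which is a welcome addition.
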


\begin{proof}
Write $\mathfrak a=\prod_{\mathfrak p}\mathfrak p^{v_{\mathfrak p}(\mathfrak a)}$.
Let $\mathfrak a_s:=\prod_{\Na\mathfrak p\le y}\mathfrak p^{v_{\mathfrak p}(\mathfrak a)}$ and
$\mathfrak a_\ell:=\prod_{\Na\mathfrak p> y}\mathfrak p^{v_{\mathfrak p}(\mathfrak a)}$.
Then $\mathfrak a=\mathfrak a_s\mathfrak a_\ell$ with $(\mathfrak a_s,\mathfrak a_\ell)=\Z[i]$.
In the divisor sum, the only $\mathfrak d$ for which both $s(\mathfrak d)$ and $\ell(\mathfrak a/\mathfrak d)$ are nonzero
is $\mathfrak d=\mathfrak a_s$.
Therefore the divisor sum equals $s(\mathfrak a_s)\,\ell(\mathfrak a_\ell)=f(\mathfrak a_s)f(\mathfrak a_\ell)=f(\mathfrak a)$.
\end{proof}

\begin{corollary}\label{cor:factor}
For $\Re(s)>1$ we have $F(s)=S(s)L(s)$.
\end{corollary}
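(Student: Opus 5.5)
The statement is the corollary $F(s)=S(s)L(s)$ for $\Re(s)>1$. I will deduce it from Lemma~\ref{lem:convolution} together with the formal identity that Dirichlet series multiply under Dirichlet convolution on ideals. The only real work is to justify absolute convergence, so that the double sum can be rearranged.

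The first step is to check that $S(s)$ and $L(s)$ converge absolutely for $\Re(s)>1$. Under the hypotheses of Theorem~\ref{thm:ideal-halasz-M} we have $|\Lambda_f|\le\kappa\Lambda$. Proposition~\ref{prop:lambda_implication}, the divisor-type majorant referred to above, should give $|f(\mathfrak a)|\le d_\kappa(\mathfrak a)$, where $d_\kappa$ is the ideal $\kappa$-divisor function with Dirichlet series $\zeta_{\Q(i)}(s)^\kappa$. Since $|s(\mathfrak a)|,|\ell(\mathfrak a)|\le|f(\mathfrak a)|$, each of $F$, $S$, $L$ is dominated termwise by
\[
\sum_{\mathfrak a} d_\kappa(\mathfrak a)(\Na\mathfrak a)^{-\sigma}=\zeta_{\Q(i)}(\sigma)^\kappa<\infty\qquad(\sigma>1).
\]
If one does not want to invoke that proposition, it is enough to assume $F$ converges absolutely for $\Re(s)>1$; absolute convergence of $S$ and $L$ then follows from the same termwise domination by $|f|$.

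The second step is to compute the product. I will expand
\[
S(s)L(s)=\sum_{\mathfrak d}\sum_{\mathfrak b}\frac{s(\mathfrak d)\ell(\mathfrak b)}{(\Na\mathfrak d\,\Na\mathfrak b)^{s}}.
\]
Using $\Na(\mathfrak d\mathfrak b)=\Na\mathfrak d\,\Na\mathfrak b$ and unique factorization of ideals, I group the terms according to the product $\mathfrak a=\mathfrak d\mathfrak b$. Each pair $(\mathfrak d,\mathfrak b)$ corresponds bijectively to a pair $(\mathfrak a,\mathfrak d)$ with $\mathfrak d\mid\mathfrak a$. This regrouping is legitimate because the double series converges absolutely. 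It gives
\[
S(s)L(s)=\sum_{\mathfrak a}\frac{(s*\ell)(\mathfrak a)}{(\Na\mathfrak a)^s}.
\]
By Lemma~\ref{lem:convolution} the numerator $(s*\ell)(\mathfrak a)$ equals $f(\mathfrak a)$, so the right-hand side is $F(s)$.

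The main obstacle, mild as it is, is the absolute convergence in the first step. Everything else is formal.
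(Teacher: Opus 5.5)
Your proposal is correct and takes the same approach as the paper, which gives this as an immediate consequence of Lemma~\ref{lem:convolution}: multiply the Dirichlet series of $s$ and $\ell$ and regroup by $\mathfrak a=\mathfrak d\mathfrak b$. Your explicit check of absolute convergence, either via $|f|\le d_\kappa$ or directly from absolute convergence of $F$, fills in the detail the paper leaves implicit.
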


We also record the divisor-type majorant that follows from the bound on the associated
von Mangoldt coefficients.
We use the Euler-product comparison from \cite[Exercise~1.2.1(iv)]{GS}.

\begin{definition}[Associated von Mangoldt coefficients]\label{def:Lambda_f_coeff}
Assume that the Dirichlet series
\[
F(s):=\sum_{\mathfrak a\in\mathcal I}\frac{f(\mathfrak a)}{(\Na\mathfrak a)^s}
\qquad(\Re(s)>1)
\]
converges absolutely. If there exist coefficients $\Lambda_f(\mathfrak a)$ such that
\[
-\frac{F'}{F}(s)=\sum_{\mathfrak a\in\mathcal I}\frac{\Lambda_f(\mathfrak a)}{(\Na\mathfrak a)^s}
\qquad(\Re(s)>1),
\]
we call $\Lambda_f$ the associated von Mangoldt coefficients of $f$.
They are supported on prime-power ideals.
\end{definition}

\begin{proposition}\label{prop:lambda_implication}
Let $f$ be multiplicative on ideals. If its associated von Mangoldt coefficients satisfy
$|\Lambda_f(\mathfrak a)|\le \kappa\Lambda(\mathfrak a)$ for some $\kappa\ge 1$, then
$|f(\mathfrak a)|\le d_{\kappa}(\mathfrak a)$ for all ideals $\mathfrak a$.
\end{proposition}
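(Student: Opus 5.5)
We will prove the bound prime by prime and then extend to all ideals. Here $d_\kappa$ is the ideal analogue of the generalized divisor function, i.e. the coefficients of $\zeta_{\Q(i)}(s)^\kappa=\prod_{\mathfrak p}(1-(\Na\mathfrak p)^{-s})^{-\kappa}$. Thus $d_\kappa$ is multiplicative with
\[
d_\kappa(\mathfrak p^k)=\binom{\kappa+k-1}{k}\ge 0 .
\]
Since $f$ is multiplicative, $f(\mathfrak a)=\prod_{\mathfrak p^k\parallel\mathfrak a}f(\mathfrak p^k)$, and likewise for $d_\kappa$. So it suffices to show $|f(\mathfrak p^k)|\le d_\kappa(\mathfrak p^k)$ for every prime ideal $\mathfrak p$ and every $k\ge 0$.

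The first step localizes the logarithmic derivative at a single prime. Multiplicativity gives the Euler product $F(s)=\prod_{\mathfrak p}F_{\mathfrak p}(s)$ with
\[
F_{\mathfrak p}(s)=\sum_{k\ge0}f(\mathfrak p^k)(\Na\mathfrak p)^{-ks}.
\]
Hence $-F'/F=\sum_{\mathfrak p}(-F_{\mathfrak p}'/F_{\mathfrak p})$. Each summand is a power series in $X=(\Na\mathfrak p)^{-s}$, and ideal norms of powers of distinct primes never coincide as ideals. Comparing coefficients ideal by ideal (coefficients indexed by ideals, not norms) therefore gives
\[
-X\frac{d}{dX}\log F_{\mathfrak p}=\sum_{k\ge1}\frac{\Lambda_f(\mathfrak p^k)}{\log\Na\mathfrak p}X^k .
\]
Write $b_k:=\Lambda_f(\mathfrak p^k)/\log\Na\mathfrak p$, so that $|b_k|\le\kappa$. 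Also write $a_k:=f(\mathfrak p^k)$, with $a_0=1$.

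Next we turn this into a recursion. Multiplying out $XF_{\mathfrak p}'=-F_{\mathfrak p}\cdot(-XF_{\mathfrak p}'/F_{\mathfrak p})$ gives
\[
k\,a_k=\sum_{j=1}^{k}b_j\,a_{k-j}\qquad(k\ge1).
\]
The same recursion with all $b_j=\kappa$ generates $(1-X)^{-\kappa}$, whose coefficients are $c_k=d_\kappa(\mathfrak p^k)$. We induct on $k$: assuming $|a_i|\le c_i$ for $i<k$,
\[
k|a_k|\le\sum_{j=1}^k\kappa\, c_{k-j}=k\,c_k ,
\]
so $|a_k|\le c_k$. This is exactly the majorant principle of \cite[Exercise~1.2.1(iv)]{GS}.

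The main point requiring care is the coefficient identification in the first step. Formal manipulations with $\log F_{\mathfrak p}$ need $F_{\mathfrak p}$ nonvanishing, so we work purely with formal power series in $X$, which is legitimate since $a_0=1$. The recursion then follows from the identity $-F'=F\cdot(-F'/F)$ of absolutely convergent Dirichlet series over ideals. Restricting that identity to ideals of the form $\mathfrak p^k$ is valid because a product $\mathfrak d\mathfrak e=\mathfrak p^k$ forces both factors to be powers of $\mathfrak p$.
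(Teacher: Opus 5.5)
Your proof is correct and follows the same route as the paper. You localize $-F'/F$ at a single prime ideal to get the recursion $k\,f(\mathfrak p^k)=\sum_{j=1}^{k}b_j\,f(\mathfrak p^{k-j})$, compare it with the recursion for $d_\kappa$ coming from $(1-X)^{-\kappa}$, induct on $k$, and extend by multiplicativity. One harmless slip: since $-\frac{d}{ds}=(\log\Na\mathfrak p)\,X\frac{d}{dX}$, the displayed local identity should read $X\frac{d}{dX}\log F_{\mathfrak p}=\sum_{k\ge1}b_kX^k$ without the minus sign; this is what your recursion and your final coefficient-restriction argument actually give, and the sign does not affect the absolute-value bound anyway.
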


\begin{proof}
This is essentially \cite[Exercise~1.2.1(iv)]{GS}; we include the proof for completeness.

Fix a prime ideal $\mathfrak p$ and write $z=(\Na\mathfrak p)^{-s}$.
Let
\[
F_{\mathfrak p}(z):=\sum_{m\ge 0} f(\mathfrak p^m)z^m
\qquad(|z|<1),
\]
so that the Euler factor of $F(s)$ at $\mathfrak p$ is $F_{\mathfrak p}((\Na\mathfrak p)^{-s})$.
By definition of $\Lambda_f$ we have, for $\Re(s)>1$,
\[
-\frac{d}{ds}\log F_{\mathfrak p}((\Na\mathfrak p)^{-s})
=\sum_{m\ge 1}\frac{\Lambda_f(\mathfrak p^m)}{(\Na\mathfrak p)^{ms}}.
\]
Since $-\frac{d}{ds} = (\log\Na\mathfrak p)\, z\frac{d}{dz}$, this becomes
\[
z\frac{F_{\mathfrak p}'(z)}{F_{\mathfrak p}(z)}
=\frac{1}{\log\Na\mathfrak p}\sum_{m\ge 1}\Lambda_f(\mathfrak p^m)z^m.
\]
Comparing coefficients of $z^m$ gives the recursion, valid for every $m\ge 1$,
\begin{equation}\label{eq:lambda-f-recursion}
m\, f(\mathfrak p^m)
=\frac{1}{\log\Na\mathfrak p}\sum_{j=1}^{m} f(\mathfrak p^{m-j})\,\Lambda_f(\mathfrak p^{j}).
\end{equation}
Now define $d_\kappa$ on prime powers by the generating function
\[
\sum_{m\ge 0} d_\kappa(\mathfrak p^m) z^m = (1-z)^{-\kappa},
\]
so $d_\kappa$ is multiplicative and $\zeta_{\Q(i)}(s)^{\kappa}=\sum_{\mathfrak a}d_\kappa(\mathfrak a)/(\Na\mathfrak a)^s$ for $\Re(s)>1$.
Differentiating $\log(1-z)^{-\kappa}$ shows that $d_\kappa$ satisfies the analogous recursion
\begin{equation}\label{eq:dkappa-recursion}
m\, d_\kappa(\mathfrak p^m)=\kappa \sum_{j=1}^{m} d_\kappa(\mathfrak p^{m-j})
\qquad(m\ge 1).
\end{equation}

We prove by induction on $m$ that $|f(\mathfrak p^m)|\le d_\kappa(\mathfrak p^m)$.
For $m=0$ this is $|f(\Z[i])|=1=d_\kappa(\Z[i])$.
Assume it holds for all smaller exponents.
Using \eqref{eq:lambda-f-recursion} and $|\Lambda_f(\mathfrak p^j)|\le \kappa\log\Na\mathfrak p$ we get
\[
m\,|f(\mathfrak p^m)|
\le \sum_{j=1}^{m} |f(\mathfrak p^{m-j})|\,\frac{|\Lambda_f(\mathfrak p^j)|}{\log\Na\mathfrak p}
\le \kappa \sum_{j=1}^{m} d_\kappa(\mathfrak p^{m-j}).
\]
By \eqref{eq:dkappa-recursion} the right-hand side equals $m\,d_\kappa(\mathfrak p^m)$, hence $|f(\mathfrak p^m)|\le d_\kappa(\mathfrak p^m)$.

Finally, for a general ideal $\mathfrak a=\prod_{\mathfrak p}\mathfrak p^{v_{\mathfrak p}(\mathfrak a)}$, multiplicativity gives
\[
|f(\mathfrak a)|=\prod_{\mathfrak p}|f(\mathfrak p^{v_{\mathfrak p}(\mathfrak a)})|
\le \prod_{\mathfrak p} d_\kappa(\mathfrak p^{v_{\mathfrak p}(\mathfrak a)}) = d_\kappa(\mathfrak a),
\]
as claimed.
\end{proof}

\subsection{Reduction to a truncated integral}

Let $f$ satisfy the hypotheses of Theorem~\ref{thm:ideal-halasz-M}, and write
\[
F(s)=\sum_{\mathfrak a\in\mathcal I}\frac{f(\mathfrak a)}{(\Na\mathfrak a)^s}
\qquad(\Re(s)>1).
\]
We use Perron's formula (Theorem~\ref{thm:perron} and Lemma~\ref{lem:perron-quant}),
the $y$-friable/$y$-rough decomposition from Definition~\ref{def:smooth-rough}, and
the bound $|f(\mathfrak a)|\le d_\kappa(\mathfrak a)$ from
Proposition~\ref{prop:lambda_implication}.

\begin{lemma}\label{lem:refined_integral_identity}
Let $\eta>0$ and $c>1$. Then
\begin{align*}
\sum_{\Na\mathfrak a \le x} f(\mathfrak a)
&= \int_{0}^{\eta}\!\int_{0}^{\eta}\!\frac{1}{2\pi i}\int_{c-i\infty}^{c+i\infty}
S(s)\,L(s+\alpha+\beta)\\
&\qquad\qquad\times \frac{L'}{L}(s+\alpha)\,\frac{L'}{L}(s+\alpha+\beta)\,\frac{x^{s}}{s}\,ds\,d\beta\,d\alpha \\
&\quad + \sum_{\Na(\mathfrak b\mathfrak d) \le x} s(\mathfrak b)\,\frac{\ell(\mathfrak d)}{(\Na\mathfrak d)^\eta}\\
&\quad + \int_{0}^{\eta} \sum_{\Na(\mathfrak b\mathfrak c\mathfrak d) \le x} s(\mathfrak b)\,\frac{\Lambda_{\ell}(\mathfrak c)}{(\Na\mathfrak c)^\alpha}\,\frac{\ell(\mathfrak d)}{(\Na\mathfrak d)^{\alpha+\eta}}\, d\alpha.
\end{align*}
\end{lemma}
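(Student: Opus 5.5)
The plan is to derive the identity from the Dirichlet series identity $F(s)=S(s)L(s)$ of Corollary~\ref{cor:factor}, by peeling off $L(s)$ in two stages with the fundamental theorem of calculus in shift variables. The first step is to write
\[
L(s)=L(s+\eta)-\int_0^\eta L'(s+\alpha)\,d\alpha
= L(s+\eta)-\int_0^\eta \frac{L'}{L}(s+\alpha)\,L(s+\alpha)\,d\alpha .
\]
Next, inside the integral, expand $L(s+\alpha)$ in the same way:
\[
L(s+\alpha)=L(s+\alpha+\eta)-\int_0^\eta \frac{L'}{L}(s+\alpha+\beta)\,L(s+\alpha+\beta)\,d\beta .
\]
Substituting gives, for $\Re s>1$,
\begin{align*}
F(s)&=S(s)L(s+\eta)-\int_0^\eta S(s)\frac{L'}{L}(s+\alpha)L(s+\alpha+\eta)\,d\alpha\\
&\quad+\int_0^\eta\!\!\int_0^\eta S(s)\frac{L'}{L}(s+\alpha)\frac{L'}{L}(s+\alpha+\beta)L(s+\alpha+\beta)\,d\beta\,d\alpha .
\end{align*}

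Then I would identify the coefficients of the first two terms. The series $S(s)L(s+\eta)$ has coefficients $\sum_{\mathfrak b\mathfrak d=\mathfrak a}s(\mathfrak b)\ell(\mathfrak d)(\Na\mathfrak d)^{-\eta}$. Since $-\frac{L'}{L}(s+\alpha)=\sum\Lambda_\ell(\mathfrak c)(\Na\mathfrak c)^{-\alpha}(\Na\mathfrak c)^{-s}$, the second term has coefficients
\[
\sum_{\mathfrak b\mathfrak c\mathfrak d=\mathfrak a}s(\mathfrak b)\,\frac{\Lambda_\ell(\mathfrak c)}{(\Na\mathfrak c)^\alpha}\,\frac{\ell(\mathfrak d)}{(\Na\mathfrak d)^{\alpha+\eta}},
\]
and the minus sign cancels against the sign of $-L'/L$. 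Applying Perron's formula (Theorem~\ref{thm:perron}) on the line $\Re s=c$ to $F$, and to each of these two Dirichlet series, turns them into the two ideal sums in the statement. For the $\alpha$-integrated term, I would apply Perron for each fixed $\alpha$ and then interchange the $\alpha$-integral with the $s$-integral. The triple term remains as the stated $s$-integral.

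The main obstacle is justifying the interchanges of integrals and the use of the untruncated Perron formula. Two points need care:
\begin{itemize}
\item[(i)] $\int_{c-i\infty}^{c+i\infty}$ only converges conditionally, so Fubini cannot be applied naively in $\alpha,\beta$.
\item[(ii)] One needs $x$ not equal to an ideal norm, or else the usual half-weight convention.
\end{itemize}
For (i), I would take $x$ non-integral, which is harmless by Remark~\ref{rem:perron-boundary}. I would then pass through the truncated Perron formula, Lemma~\ref{lem:perron-quant}, with height $T$. On the segment $|{\Im s}|\le T$ Fubini is legitimate, because all series converge absolutely and uniformly for $\Re s=c>1$ and $\alpha,\beta\in[0,\eta]$. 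This uses $|s|,|\ell|\le d_\kappa$ from Proposition~\ref{prop:lambda_implication} and $|\Lambda_\ell|\le\kappa\Lambda$.

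Finally I would let $T\to\infty$. The Perron error terms are bounded uniformly in $\alpha,\beta$ by $\sum d_\kappa * d_\kappa * \kappa\Lambda$-type majorants weighted by $(x/\Na\mathfrak a)^c\min\{1,(T|\log(x/\Na\mathfrak a)|)^{-1}\}$. These tend to $0$ by dominated convergence. This yields the triple integral as a limit of truncated integrals, interpreted as $\lim_{T\to\infty}\int_{c-iT}^{c+iT}$, which is the meaning of the displayed integral.
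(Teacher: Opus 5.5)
Your proposal is correct and follows essentially the same route as the paper, which invokes the Granville--Soundararajan manipulation: apply $L(s)=L(s+\eta)-\int_0^\eta \frac{L'}{L}(s+\alpha)L(s+\alpha)\,d\alpha$ twice, read off the coefficients of the two non-integral pieces, and apply Perron's formula (Theorem~\ref{thm:perron}) to each Dirichlet series. Your truncated-Perron argument, with error terms uniform in $\alpha,\beta$, supplies the interchange-of-limits justification that the paper leaves to the citation, and your point that $x$ must not be an ideal norm matches the hypothesis of Theorem~\ref{thm:perron} that the paper's proof implicitly needs.
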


\begin{proof}
Apply Perron's formula for ideals, Theorem~\ref{thm:perron}, to the Dirichlet series
$F=SL$ from Corollary~\ref{cor:factor}. The manipulations with $L'/L$ are those of
\cite[\S2.4.6, Lemma~2.4.4]{GS}; compare also \cite[Lemma~2.2]{GHS19}. We replace
integer sums by ideal sums and then apply Perron's formula in the form of
Theorem~\ref{thm:perron}.
\end{proof}

The next lemma handles the error terms.
\begin{lemma}\label{lem:error_terms}
Suppose that $|\Lambda_f(\mathfrak a)| \le \kappa\Lambda(\mathfrak a)$ for some $\kappa \ge 1$ and set $\eta:=1/\log y$.
Then
\begin{align*}
&\sum_{\Na(\mathfrak b\mathfrak d) \le x} |s(\mathfrak b)| \, \frac{|\ell(\mathfrak d)|}{(\Na\mathfrak d)^\eta}
+\int_0^\eta \sum_{\Na(\mathfrak b\mathfrak c\mathfrak d) \le x} |s(\mathfrak b)|\, \frac{|\Lambda_{\ell}(\mathfrak c)|}{(\Na\mathfrak c)^\alpha} \, \frac{|\ell(\mathfrak d)|}{(\Na\mathfrak d)^{\eta+\alpha}} \,d\alpha
\ \ll_{\kappa}\ \frac{x}{\log x}(\log y)^\kappa.
\end{align*}
\end{lemma}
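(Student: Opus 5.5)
We follow the argument of \cite[\S2.4.6]{GS} and \cite[Lemma~2.2]{GHS19}, with ideal sums grouped by norm, and we treat the two terms separately. Throughout, all coefficients are majorized by nonnegative multiplicative functions. We have $|s(\mathfrak b)|\le d_\kappa(\mathfrak b)\mathbf 1_{\mathfrak b\ y\text{-friable}}$ by Proposition~\ref{prop:lambda_implication}. Likewise $|\ell(\mathfrak d)|\le d_\kappa(\mathfrak d)\mathbf 1_{\mathfrak d\ y\text{-rough}}$. Finally $|\Lambda_\ell(\mathfrak c)|\le\kappa\Lambda(\mathfrak c)\mathbf 1_{\Na\mathfrak c>y}$. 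We may assume $y\le x$, since otherwise the bound is trivial from $\sum_{\Na\mathfrak a\le x}d_\kappa(\mathfrak a)\ll_\kappa x(\log x)^{\kappa-1}$. That estimate itself follows from Lemma~\ref{lem:Mertens} by a standard Rankin/Shiu-type argument, or by comparison with $\zeta_{\Q(i)}^\kappa$.

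\textbf{Second term.} First fix $\mathfrak b$ and $\mathfrak d$ and sum over $\mathfrak c$. We use only the majorant $|\Lambda_\ell(\mathfrak c)|(\Na\mathfrak c)^{-\alpha}\le\kappa\Lambda(\mathfrak c)$ for $\alpha\ge0$, together with Lemma~\ref{lem:PIT}. This gives $\sum_{\Na\mathfrak c\le x/\Na(\mathfrak b\mathfrak d)}|\Lambda_\ell(\mathfrak c)|\ll\kappa x/\Na(\mathfrak b\mathfrak d)$. Integrating over $\alpha\in[0,\eta]$ produces a factor $\eta=1/\log y$, but this is too lossy in the $\mathfrak d$-sum. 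Instead we keep the weight $(\Na\mathfrak d)^{-\alpha-\eta}$ and sum $\mathfrak d$ last. The term is then bounded by
\[
\kappa x\int_0^\eta\sum_{\mathfrak b\ y\text{-friable}}\frac{d_\kappa(\mathfrak b)}{\Na\mathfrak b}\sum_{\substack{\mathfrak d\ y\text{-rough}\\ \Na\mathfrak d\le x}}\frac{d_\kappa(\mathfrak d)}{(\Na\mathfrak d)^{1+\eta+\alpha}}\,d\alpha .
\]
A subtlety is that we need the constraint $\Na\mathfrak c>y$ together with $\Na(\mathfrak b\mathfrak c\mathfrak d)\le x$. To gain the factor $1/\log x$ we follow \cite{GS}: write $\log x\le\log\Na\mathfrak b+\log\Na\mathfrak c+\log\Na\mathfrak d+\log(x/\Na(\mathfrak b\mathfrak c\mathfrak d))$. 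We then handle each piece as follows.

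\begin{itemize}
\item[(a)] The $\log\Na\mathfrak b$ piece. We use $\sum_{\mathfrak b\ y\text{-friable}}d_\kappa(\mathfrak b)\log\Na\mathfrak b/(\Na\mathfrak b)^{1}$, bounded in terms of a $\Lambda$-convolution. This is $\ll_\kappa(\log y)^{\kappa+1}$, and the Euler product over $\Na\mathfrak p\le y$ is bounded via Lemma~\ref{lem:Mertens}.
\item[(b)] The $\log\Na\mathfrak c$ piece. This is controlled by $\sum_{\Na\mathfrak c\le X}\Lambda(\mathfrak c)\log\Na\mathfrak c\ll X\log X$.
\item[(c)] The $\log\Na\mathfrak d$ piece. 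We use $\int_0^\eta\sum_{\mathfrak d\ y\text{-rough}}d_\kappa(\mathfrak d)\log\Na\mathfrak d/(\Na\mathfrak d)^{1+\eta+\alpha}\,d\alpha$. Since $\log$ is the derivative in $\alpha$, this integral telescopes to at most $\sum d_\kappa(\mathfrak d)/(\Na\mathfrak d)^{1+\eta}$. The $y$-rough Euler product at $1+\eta$ is $\ll_\kappa1$, because $\sum_{\Na\mathfrak p>y}(\Na\mathfrak p)^{-1-1/\log y}\ll1$ by Lemma~\ref{lem:Mertens} and partial summation.
\end{itemize}

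The friable Euler product at $1$ gives $(\log y)^\kappa$ by Lemma~\ref{lem:Mertens}. Combining the pieces, each loses at most one power of $\log y$, and that loss is compensated by either the factor $\eta$ or the telescoping. This yields $x(\log y)^\kappa/\log x$.

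\textbf{First term.} The first term is treated similarly but more simply. We insert $\log x\le\log\Na\mathfrak b+\log\Na\mathfrak d+\log(x/\Na(\mathfrak b\mathfrak d))$ and write $\log$ via $\Lambda*1$ on ideals. This reduces the term to sums of the shape of the second term. Alternatively, we use the divisor bound for $y$-rough ideals in the form $\sum_{\Na\mathfrak d\le X,\ y\text{-rough}}d_\kappa(\mathfrak d)\ll_\kappa X/\log y+1$ for $X\ge y$, which comes from a sieve/Rankin argument. We then sum against $d_\kappa(\mathfrak b)/\Na\mathfrak b$ over friable $\mathfrak b$, weighted by $(\Na\mathfrak d)^{-\eta}$.

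\textbf{Main obstacle.} The main obstacle is extracting the saving $1/\log x$ (rather than $1/\log y$) uniformly in the range $y\le x$. We plan to achieve this exactly by the logarithmic-weight insertion above, which is the step of \cite[Lemma~2.4.4ff.]{GS}. The ideal setting only requires the ideal Mertens bounds (Lemma~\ref{lem:Mertens}) and the Chebyshev bound (Lemma~\ref{lem:PIT}), so the integer proof transfers verbatim after grouping by norm via Lemma~\ref{lem:norm-compression}.
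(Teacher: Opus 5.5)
Your overall strategy is the standard argument behind \cite[Lemma~2.4.5]{GS} and \cite[Lemma~2.4]{GHS19}, which the paper simply cites. You majorize by $d_\kappa\mathbf 1_{\rm friable}$, $\kappa\Lambda\mathbf 1_{\Na\mathfrak c>y}$ and $d_\kappa\mathbf 1_{\rm rough}$, insert $\log x=\log\Na\mathfrak b+\log\Na\mathfrak c+\log\Na\mathfrak d+\log(x/\Na(\mathfrak b\mathfrak c\mathfrak d))$, and close with Chebyshev and the two Euler products. Pieces (a) and (c) are handled correctly. However, step (b) does not close as you describe it.

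\textbf{The gap in (b).} Bounding $\sum_{\Na\mathfrak c\le z}\Lambda(\mathfrak c)\log\Na\mathfrak c\ll z\log z$ with $z=x/\Na(\mathfrak b\mathfrak d)$ leaves a factor $\log(x/\Na(\mathfrak b\mathfrak d))$. This factor is $\asymp\log x$ on the bulk of the friable $\mathfrak b$-sum, since small $\mathfrak b$ carry the mass $(\log y)^\kappa$. The $\alpha$-integral of the $\mathfrak d$-weight supplies only $\eta$. So your bound for this piece is of size $x\,\eta\,(\log y)^\kappa\log x$, and after dividing by $\log x$ you get only $x(\log y)^{\kappa-1}$. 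That misses the target by the factor $\log x/\log y$, which is precisely the saving the lemma asserts; in the application $\log y\asymp\log\log x$, so the loss is real. Your sentence ``each loses at most one power of $\log y$'' is false for this piece.

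\textbf{The fix.} Apply the same telescoping you use in (c) to the weight $(\Na\mathfrak c)^{-\alpha}$, treating (b) and (c) together:
\[
\int_0^\eta \log(\Na\mathfrak c\,\Na\mathfrak d)\,(\Na\mathfrak c\,\Na\mathfrak d)^{-\alpha}\,d\alpha
=1-(\Na\mathfrak c\,\Na\mathfrak d)^{-\eta}\le 1 .
\]
Hence (b)$+$(c) is at most
\[
\sum_{\Na(\mathfrak b\mathfrak c\mathfrak d)\le x}|s(\mathfrak b)|\,|\Lambda_\ell(\mathfrak c)|\,\frac{|\ell(\mathfrak d)|}{(\Na\mathfrak d)^{\eta}}
\ll \kappa x\sum_{\mathfrak b}\frac{|s(\mathfrak b)|}{\Na\mathfrak b}\sum_{\mathfrak d}\frac{|\ell(\mathfrak d)|}{(\Na\mathfrak d)^{1+\eta}}
\ll_\kappa x(\log y)^\kappa .
\]

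\textbf{The omitted fourth piece.} You never treat the $\log(x/\Na(\mathfrak b\mathfrak c\mathfrak d))$ piece. It is harmless: $\sum_{\Na\mathfrak c\le z}|\Lambda_\ell(\mathfrak c)|\log(z/\Na\mathfrak c)=\int_1^z\sum_{\Na\mathfrak c\le t}|\Lambda_\ell(\mathfrak c)|\,\frac{dt}{t}\ll\kappa z$. This gives $\ll x\,\eta\,(\log y)^\kappa$.

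\textbf{Smaller issues.}
\begin{enumerate}
\item Your ``alternative'' for the first term should be dropped. The bound $\sum_{\Na\mathfrak d\le X,\ y\text{-rough}}d_\kappa(\mathfrak d)\ll X/\log y+1$ is false for $\kappa>1$: the true size is $\asymp X(\log X)^{\kappa-1}/(\log y)^\kappa$. Even granting it, summing against $d_\kappa(\mathfrak b)/\Na\mathfrak b$ only gives $x(\log y)^{\kappa-1}$, again missing the $1/\log x$ saving.
\item In the log-insertion route for the first term, the identity to use is $d_\kappa\log\Na=d_\kappa*\kappa\Lambda$, restricted to friable or rough support, rather than $\Lambda*1$.
\item The reduction to $y\le x$ needs the supports. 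When $y>x$, the rough factor is forced to be $\mathfrak d=\Z[i]$ and the second term vanishes. Without this, the first term would be majorized by a $d_{2\kappa}$-sum, which is too large.
\end{enumerate}
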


\begin{proof}
This follows as in \cite[\S2.4.6, Lemma~2.4.5]{GS}; compare also
\cite[Lemma~2.4]{GHS19}. Here Proposition~\ref{prop:lambda_implication} supplies the divisor-type bound on $f$, and
Lemma~\ref{lem:Mertens} supplies the ideal-prime estimate used in place of the usual
Mertens estimate over primes.
\end{proof}

\begin{proposition}\label{prop:truncated}
Let $f:\mathcal I\to \C$ be multiplicative and suppose $|\Lambda_f(\mathfrak a)|\le \kappa\Lambda(\mathfrak a)$ for some $\kappa\ge 1$.
Let $x,y,T$ satisfy $x>y\ge T^2$ and $T\ge (\log x)^{\kappa+2}$. Put $\eta:=1/\log y$ and $c_0:=1+1/\log x$.
Define the truncated log-derivative polynomial
\[
P(z):=\sum_{y<\Na\mathfrak c\le x/y}\frac{\Lambda_{\ell}(\mathfrak c)}{(\Na\mathfrak c)^z}.
\]
Then
\begin{align*}
\sum_{\Na\mathfrak a \le x} f(\mathfrak a)
&=\int_{0}^{\eta}\!\int_{0}^{\eta}\!\frac{1}{2\pi i}\int_{c_{0}-iT}^{c_{0}+iT}
S\bigl(s-\alpha-\beta/2\bigr)\,L\bigl(s+\beta/2\bigr)\\
&\qquad\qquad\times P(s-\beta/2)\,P(s+\beta/2)
\,\frac{x^{s-\alpha-\beta/2}}{s-\alpha-\beta/2}\,ds\,d\beta\,d\alpha\\
&\qquad\qquad
+ O\!\left(\frac{x}{\log x}(\log y)^\kappa\right)
+ O\!\left(\frac{x(\log x)^{\kappa+2}}{T\log y}\right).
\end{align*}
\end{proposition}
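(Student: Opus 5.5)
We start from the exact identity of Lemma~\ref{lem:refined_integral_identity}, with $\eta=1/\log y$ and $c=c_0=1+1/\log x$. Lemma~\ref{lem:error_terms} shows that its second and third terms are $O_\kappa\bigl(\frac{x}{\log x}(\log y)^\kappa\bigr)$, so only the triple integral needs work. In the inner integral we substitute $s\mapsto s-\alpha-\beta/2$ and move the line of integration back to $\Re s=c_0$. This is harmless because every factor is absolutely convergent to the right of $1$. The integrand becomes
\[
S(s-\alpha-\beta/2)\,L(s+\beta/2)\,\frac{L'}{L}(s-\beta/2)\,\frac{L'}{L}(s+\beta/2)\,\frac{x^{s-\alpha-\beta/2}}{s-\alpha-\beta/2}.
\]
Strictly, the real part of $s-\alpha-\beta/2$ must stay above $1$. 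We achieve this by integrating on $\Re s=c_0+\alpha+\beta/2$ and then shifting, or equivalently by keeping the line fixed and absorbing the small shift into the error analysis, as is done in \cite[\S2.4.6]{GS}.

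Next, we replace each $-L'/L(s\pm\beta/2)=\sum \Lambda_\ell(\mathfrak c)(\Na\mathfrak c)^{-s\mp\beta/2}$ by the truncated polynomial $P(s\pm\beta/2)$. Since $\Lambda_\ell$ vanishes on ideals of norm $\le y$, the only discarded terms have $\Na\mathfrak c>x/y$. The sign change disappears because two minus signs cancel. We then go back to coefficient sums. A product in which at least one $\Lambda_\ell(\mathfrak c)$ factor has $\Na\mathfrak c>x/y$ corresponds, through Perron's formula (Theorem~\ref{thm:perron}), to a sum over $\mathfrak b\mathfrak c\mathfrak c'\mathfrak d$ of norm $\le x$. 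Because $\mathfrak c'$ and every prime factor of $\mathfrak d$ also have norm $>y$ when they are nontrivial, the constraint $\Na\mathfrak c>x/y$ forces $\mathfrak c'=\mathfrak d=\Z[i]$. A factor $\mathfrak c'$ with $y<\Na\mathfrak c'<y$ cannot occur, so at most one large factor survives. The leftover sums are then of the same shape as in Lemma~\ref{lem:error_terms}, namely $\sum_{\Na(\mathfrak b\mathfrak c)\le x}|s(\mathfrak b)||\Lambda(\mathfrak c)|$ weighted by $(\Na \mathfrak c)^{-\alpha}$ and integrated over $\alpha,\beta\in[0,\eta]$. We bound these using Lemma~\ref{lem:PIT} together with $\sum_{\Na\mathfrak b\le y}|s(\mathfrak b)|/\Na\mathfrak b\ll(\log y)^\kappa$ from Lemma~\ref{lem:Mertens} and Proposition~\ref{prop:lambda_implication}. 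The $\eta^2$ from the double integral is compensated by the $\log$ factors, as in \cite[Lemma~2.4.5]{GS}. This yields $O\bigl(\frac{x}{\log x}(\log y)^\kappa\bigr)$ again.

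Finally, we truncate to $|\Im s|\le T$. For this we apply Lemma~\ref{lem:perron-quant}, with $x$ replaced by $x+\tfrac12$ as in Remark~\ref{rem:perron-boundary}, to the Dirichlet series $S(s-\alpha-\beta/2)L(s+\beta/2)P(s-\beta/2)P(s+\beta/2)$ at $\sigma=c_0$. Its coefficients are bounded in absolute value by $(d_\kappa*\kappa\Lambda*\kappa\Lambda*d_\kappa)(\mathfrak a)$ up to the weights $(\Na)^{\alpha+\beta/2}\le x^{2\eta}\ll 1$; here we use $x^{\eta}=e^{\log x/\log y}$, so we need $x^{\eta}$ to be bounded, or we absorb it into the error. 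The Perron error is $\sum |c(\mathfrak a)|(x/\Na\mathfrak a)^{c_0}\min\{1,1/(T|\log(x/\Na\mathfrak a)|)\}$. The range $|\Na\mathfrak a-x|>x/T$ gives $\frac{x}{T}\sum|c(\mathfrak a)|/\Na\mathfrak a^{c_0}\ll \frac{x}{T}(\log x)^{\kappa+2}$, since the Dirichlet series at $c_0$ has size $\zeta_{\Q(i)}(c_0)^{\kappa}(\log x)^2$. The short range $|\Na\mathfrak a-x|\le x/T$ is handled with Lemma~\ref{lem:BT_mod4}, which gives a bound for the ideal von Mangoldt function in intervals of length $x/T\ge\sqrt x$ and provides the saving $1/\log y$. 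Integrating over $\alpha,\beta$ contributes a further $\eta^2$, but one $\log y$ is recovered, giving $O\bigl(\frac{x(\log x)^{\kappa+2}}{T\log y}\bigr)$.

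The main obstacle is this last short-interval Perron error, which is where the conditions $y\ge T^2$ and $T\ge(\log x)^{\kappa+2}$ are used. There I would first write the coefficient as $\sum_{\mathfrak c}\Lambda_\ell(\mathfrak c)\,g(\mathfrak a/\mathfrak c)$ and apply the short-interval prime bound in $\mathfrak c$ for each fixed cofactor.
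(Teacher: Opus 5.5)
Your architecture matches the paper's: the identity of Lemma~\ref{lem:refined_integral_identity}, Lemma~\ref{lem:error_terms} for the two outer terms, then Perron truncation with a short-interval bound for $\Lambda$, a contour shift, and replacement of $L'/L$ by $P$. However, the three steps you actually write out each contain an error. \emph{Shift and replacement.} After $s\mapsto s-\alpha-\beta/2$, the factor $\frac{L'}{L}(s-\beta/2)$ on $\Re s=c_0$ sits at real part $c_0-\beta/2$. This is $<1$ as soon as $\beta>2/\log x$, which covers most of $[0,\eta]$, since $\eta=1/\log y$ and in the application $\log y\asymp\log\log x$. So you cannot shift while $L'/L$ is still present. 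Your worry about $\Re(s-\alpha-\beta/2)>1$ is misplaced: $S$ is a finite Euler product, absolutely convergent for $\Re s>0$, and the target formula itself evaluates $S$ below $1$. The replacement must therefore be done first, on the original line, where it is exact. Indeed, $\Lambda_\ell$ is supported on norms $>y$, so if one $\Lambda_\ell$-factor has norm $>x/y$, the other pushes the total norm above $x$. Your claim that this ``forces $\mathfrak c'=\mathfrak d=\Z[i]$'' and leaves sums of the shape in Lemma~\ref{lem:error_terms} is wrong, because $\Lambda_\ell(\Z[i])=0$; the discarded contribution is exactly zero. Only after the replacement should you shift, using that $P$ is entire.

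\emph{Truncation.} The bound $(\Na\mathfrak a)^{\alpha+\beta/2}\le x^{2\eta}\ll 1$ is false. We have $x^{\eta}=e^{\log x/\log y}$, and for $y=T^2=(\log x)^{2\kappa+6}$, as in Proposition~\ref{prop:ideal-halasz-integral}, this exceeds every power of $\log x$, so it cannot be absorbed into the error. Instead, apply Lemma~\ref{lem:perron-quant} in the variable $s'=s-\alpha-\beta/2$ on the line $\Re s'=c_0-\alpha-\beta/2$, to $S(s')L(s'+\alpha+\beta)P(s'+\alpha)P(s'+\alpha+\beta)$. The coefficients of this series carry weights $\le 1$, and $(x/\Na\mathfrak a)^{c_0-\alpha-\beta/2}\ll(x/\Na\mathfrak a)^{c_0}$ for $\Na\mathfrak a\le ex$. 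In the tail, the growth $(\Na\mathfrak c)^{\beta/2}\le(x/y)^{\beta/2}$ of the $P$-coefficients is offset by $x^{-\beta/2}$.

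Your split of ranges is also incorrect. For $x/T<|\Na\mathfrak a-x|\le x/2$, the Perron weight is $\asymp x/(T|\Na\mathfrak a-x|)$, not $\ll 1/T$, so $(x/T)\sum|c(\mathfrak a)|(\Na\mathfrak a)^{-c_0}$ does not bound that range. The short-interval input is needed on every dyadic shell $|\Na\mathfrak a-x|\asymp 2^k x/T$. On each shell, fix the cofactor and sum the $\Lambda_\ell$-variable over an interval of relative length $\asymp 2^k/T$ around some $M>y\ge T^2$; this is Lemma~\ref{lem:ideal-vm-short-interval}. The relevant condition is $M\ge T^2$ for that variable, not $x/T\ge\sqrt x$, and Brun--Titchmarsh gives no extra $1/\log y$. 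Each shell then contributes $\ll (x/T)(\log x)^{\kappa+1}$, and there are $\ll\log T\le\log x$ shells. The factor $\eta^2$ from the $\alpha,\beta$-integration then yields the stated $O\bigl(x(\log x)^{\kappa+2}/(T\log y)\bigr)$.
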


\begin{proof}
Starting from Lemma~\ref{lem:refined_integral_identity}, apply
Lemma~\ref{lem:perron-quant}, after grouping ideal sums by norm as in
Lemma~\ref{lem:norm-compression}. The two error terms occurring outside the
integral in Lemma~\ref{lem:refined_integral_identity} are bounded by
Lemma~\ref{lem:error_terms}.

It remains to estimate the error introduced by truncating Perron's integral. After norm-compression,
this is the ideal analogue of the truncation step in
\cite[\S2.4.7, Proposition~2.4.6]{GS}; compare also
\cite[Proposition~2.1 and Lemma~2.5]{GHS19}. The required inputs in the ideal setting are the divisor-type majorant from Proposition~\ref{prop:lambda_implication}, the
Mertens estimate Lemma~\ref{lem:Mertens}, and the short-interval bound for the ideal von Mangoldt
function, Lemma~\ref{lem:ideal-vm-short-interval}. These give the additional contribution
\[
O_\kappa\!\left(\frac{x(\log x)^{\kappa+2}}{T\log y}\right).
\]
After grouping ideals by norm, the remaining contour shift and the replacement of the logarithmic
derivatives by the truncated polynomial
\[
P(z)=\sum_{y<\Na\mathfrak c\le x/y}\frac{\Lambda_{\ell}(\mathfrak c)}{(\Na\mathfrak c)^z}
\]
are the corresponding steps of \cite[\S2.4.7, Proposition~2.4.6]{GS} and
\cite[Proposition~2.1]{GHS19}. This gives the stated formula.
\end{proof}

\subsection{An $L^2$-estimate}

We next prove the $L^2$-estimate needed to control the Dirichlet polynomials in
Proposition~\ref{prop:truncated}.

\begin{lemma}[Short-interval bound for the ideal von Mangoldt function]
\label{lem:ideal-vm-short-interval}
Let $T\ge 1$ and $M\ge T^2$. Then
\[
\sum_{M e^{-1/T}\le \Na\mathfrak a\le M e^{1/T}} \Lambda(\mathfrak a)
\ll \frac{M}{T}.
\]
\end{lemma}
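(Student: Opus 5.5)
Write $H:=M(e^{1/T}-e^{-1/T})$ for the length of the norm window. Since $T\ge 1$, we have $H\asymp M/T$. Since $M\ge T^2$, we also have $H\gg M/T\ge\sqrt M$, so the window is long but may be far shorter than $M$. The plan is to split the ideal von Mangoldt sum by the splitting type of the underlying rational prime, as in the proof of Lemma~\ref{lem:PIT}. Then each piece is bounded by Brun--Titchmarsh (Lemma~\ref{lem:BT_mod4}) or trivially. Concretely, $\Lambda(\mathfrak a)\ne 0$ only for $\mathfrak a=\mathfrak p^k$. In that case $\Na\mathfrak a$ is one of $2^k$, $p^k$ with $p\equiv1\pmod 4$ (two ideals each), or $p^{2k}$ with $p\equiv 3\pmod 4$. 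The weights are $\log 2$, $\log p$ and $\log p^2=2\log p$ respectively.

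The main contribution is from $k=1$ and $p\equiv1\pmod 4$. It is
\[
2\sum_{\substack{Me^{-1/T}\le p\le Me^{1/T}\\ p\equiv1\ (\mathrm{mod}\ 4)}}\log p .
\]
We apply Lemma~\ref{lem:BT_mod4} with $U=Me^{-1/T}$ and $H$ as above. Note $H\le U$ once $T\ge1$ (take $U\ge3$; small $M$ is trivial since then the sum is $O(1)\ll M/T$ as $T\le\sqrt M$). This gives $\ll H\log U/\log(2H)$. The key point, and the only place the hypothesis $M\ge T^2$ enters, is that $\log(2H)\ge\log(M/T)\ge\tfrac12\log M$. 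Hence $\log U/\log(2H)\ll 1$, and this piece is $\ll H\ll M/T$.

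The remaining pieces are the higher powers, the inert primes and the ramified prime. For $k\ge2$ the norms are $p^j$ with $j\ge2$, and $p^j$ lies in the window iff $p\in[M^{1/j}e^{-1/(jT)},M^{1/j}e^{1/(jT)}]$. I would bound the number of such $p$ trivially by the number of integers in that interval, $\ll M^{1/j}/(jT)+1$. Each term carries weight $\ll\log M$, and only $j\le\log_2(eM)$ occur. The total is then $\ll (\log M)\bigl(M^{1/2}/T+\log M\bigr)$. Using $M\ge T^2$, this is $\ll M/T$: indeed $M^{1/2}\log M/T\ll M/T$, and $(\log M)^2\ll M^{1/2}\le M/T$. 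The same count covers the inert primes, whose norms $p^{2k}$ are squares, and the powers of $\mathfrak p_2$, of which at most $O(1)$ lie in a window of multiplicative width $e^{2/T}\le e^2$.

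The main obstacle is the first step: getting a saving of $1/\log(2H)$ that exactly cancels $\log M$. A naive count $H\log M$ loses a logarithm, and the prime number theorem in progressions is unavailable for windows as short as $\sqrt M$. Brun--Titchmarsh in the form of Lemma~\ref{lem:BT_mod4}, combined with $H\ge\sqrt M$, is exactly what removes this loss.
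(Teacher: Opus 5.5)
Your route is the paper's: split $\Lambda$ by splitting type. You apply Lemma~\ref{lem:BT_mod4} to the split primes, using $M\ge T^2$ to get $\log(2H)\ge\tfrac12\log M$. You bound the higher powers, the inert primes and the ramified prime by $O(\sqrt M)$-type counts. Your integer count in the short $p$-intervals replaces the paper's ``at most one power per prime ideal, then $\psi_{\Q(i)}(\sqrt{U_+})\ll\sqrt M$''; this difference is inessential.

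One assertion is false, however. With $U=Me^{-1/T}$, the condition $H\le U$ is equivalent to $e^{2/T}\le 2$, i.e.\ $T\ge 2/\log 2\approx 2.89$. For $T=1$, for instance, $H\approx 2.35M$ while $U\approx 0.37M$. So Lemma~\ref{lem:BT_mod4}, which requires $H\le U$, cannot be invoked when $1\le T<2/\log 2$.

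The repair is immediate, and it is what the paper does with its constant $T_0$. For bounded $T$, the whole sum is at most $\psi_{\Q(i)}(eM)\ll M\ll M/T$ by Lemma~\ref{lem:PIT}. So you may assume $T\ge 3$, and then your argument goes through unchanged.
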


\begin{proof}
Let $U_-:=Me^{-1/T}$ and $U_+:=Me^{1/T}$, and put
\[
H:=U_+-U_- = M\bigl(e^{1/T}-e^{-1/T}\bigr)\asymp \frac{M}{T}.
\]
We must show
\begin{equation}\label{eq:shortinterval-Lambda}
\sum_{U_-\le \Na\mathfrak a\le U_+}\Lambda(\mathfrak a)\ll H.
\end{equation}
Choose an absolute constant \(T_0\) so large that \(H\le U_-\) and \(e^{2/T}<2\) whenever
\(T\ge T_0\). If \(T<T_0\), then
\[
 \sum_{U_-\le \Na\mathfrak a\le U_+}\Lambda(\mathfrak a)
 \le \sum_{\Na\mathfrak a\le U_+}\Lambda(\mathfrak a)
 \ll U_+\ll_{T_0} H,
\]
so \eqref{eq:shortinterval-Lambda} follows. We may therefore assume \(T\ge T_0\),
and hence \(H\le U_-\) and \(U_+/U_-=e^{2/T}<2\).

Since $\Lambda(\mathfrak a)$ is supported on prime powers, write
\[
\sum_{U_-\le \Na\mathfrak a\le U_+}\Lambda(\mathfrak a)=S_1+S_{\ge 2},
\]
where
\[
S_1:=\sum_{\substack{\mathfrak p\ \mathrm{prime}\\ U_-\le \Na\mathfrak p\le U_+}}\log \Na\mathfrak p,
\qquad
S_{\ge 2}:=\sum_{\substack{\mathfrak p\ \mathrm{prime},\,k\ge 2\\ U_-\le (\Na\mathfrak p)^k\le U_+}}\log \Na\mathfrak p.
\]

For $T\ge T_0$, the interval $[U_-,U_+]$ contains at most one power
$(\Na\mathfrak p)^k$ with $k\ge 2$ for each fixed prime ideal $\mathfrak p$, since
$U_+/U_-=e^{2/T}<2$. Also $\Na\mathfrak p\le U_+^{1/2}\ll \sqrt M$ for every term
in $S_{\ge 2}$. Therefore, by Lemma~\ref{lem:PIT},
\[
S_{\ge 2}\ll \sum_{\Na\mathfrak p\le \sqrt{U_+}}\log \Na\mathfrak p
\ll \sum_{\Na\mathfrak a\le \sqrt{U_+}}\Lambda(\mathfrak a)
\ll \sqrt M
\ll H,
\]
since $M\ge T^2$.

For $S_1$, prime ideals in $\Z[i]$ arise from split primes $p\equiv 1\pmod 4$,
inert primes $p\equiv 3\pmod 4$, and the ramified prime above $2$. Hence
\[
S_1 \le 2\!\!\sum_{\substack{U_-\le p\le U_+\\ p\equiv 1\ (\mathrm{mod}\ 4)}}\!\!\log p
\;+\;
\sum_{\substack{U_-\le p^2\le U_+\\ p\equiv 3\ (\mathrm{mod}\ 4)}} 2\log p
\;+\;O(1).
\]
The inert contribution is again $\ll \sqrt M\ll H$. For the split contribution,
Lemma~\ref{lem:BT_mod4} gives
\[
\sum_{\substack{U_-\le p\le U_+\\ p\equiv 1\ (\mathrm{mod}\ 4)}}\log p
\ll \frac{H\log U_+}{\log H}.
\]
Since $U_+\asymp M$ and $H\asymp M/T$ with $M\ge T^2$, one has
$\log U_+/\log H\ll 1$, so this is $\ll H$.

Therefore
\[
\sum_{M e^{-1/T}\le \Na\mathfrak a\le M e^{1/T}} \Lambda(\mathfrak a)
\ll H\asymp \frac{M}{T}.
\]
\end{proof}

\begin{lemma}\label{lem:MeanSquareGaussian}
Let $T\ge 1$ and $x\ge T^2$.
For any complex numbers $\{c(\mathfrak a)\}$ indexed by nonzero ideals $\mathfrak a\subset \Z[i]$,
\[
\int_{-T}^{T}\Bigl|\sum_{T^2\le \Na\mathfrak a\le x} c(\mathfrak a)\,\Lambda(\mathfrak a)\,(\Na\mathfrak a)^{-it}\Bigr|^2\,dt
\ \ll\ 
\sum_{T^2\le \Na\mathfrak a\le x} \Na\mathfrak a\,|c(\mathfrak a)|^2\,\Lambda(\mathfrak a),
\]
with an absolute implied constant.
\end{lemma}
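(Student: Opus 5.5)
The plan is to use the standard Gallagher-type argument: smooth the truncation $[-T,T]$ by a nonnegative majorant whose Fourier transform has compact support, expand the square, and reduce to counting pairs of ideals with nearby norms. That count is then controlled by the short-interval estimate of Lemma~\ref{lem:ideal-vm-short-interval}.

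Write $a(\mathfrak a):=c(\mathfrak a)\Lambda(\mathfrak a)$ for $T^2\le\Na\mathfrak a\le x$, and $a(\mathfrak a)=0$ otherwise. Fix a nonnegative even Schwartz function $\Phi$ with $\Phi\ge 1$ on $[-1,1]$ and $\widehat\Phi$ supported in $[-1,1]$; for instance, a suitable multiple of $(\sin u/u)^2$ rescaled. Bounding the integral by $\int_{\R}\Phi(t/T)|\cdot|^2\,dt$ and expanding gives
\[
\sum_{\mathfrak a,\mathfrak b} a(\mathfrak a)\overline{a(\mathfrak b)}\,T\,\widehat\Phi\Bigl(\tfrac{T}{2\pi}\log\tfrac{\Na\mathfrak a}{\Na\mathfrak b}\Bigr).
\]
Only pairs with $|\log(\Na\mathfrak a/\Na\mathfrak b)|\ll 1/T$ contribute, and each such term is bounded by $T$. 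Applying $|a(\mathfrak a)a(\mathfrak b)|\le\tfrac12(|a(\mathfrak a)|^2\Lambda(\mathfrak b)/\Lambda(\mathfrak a)+|a(\mathfrak b)|^2\Lambda(\mathfrak a)/\Lambda(\mathfrak b))$, which is legitimate on the support of $\Lambda$, and using symmetry, the whole expression is
\[
\ll T\sum_{\mathfrak a}|c(\mathfrak a)|^2\Lambda(\mathfrak a)\sum_{\Na\mathfrak a e^{-C/T}\le \Na\mathfrak b\le \Na\mathfrak a e^{C/T}}\Lambda(\mathfrak b).
\]
Here $C$ is an absolute constant fixed by the choice of $\Phi$.

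It remains to bound the inner sum. Lemma~\ref{lem:ideal-vm-short-interval} applies with $M=\Na\mathfrak a\ge T^2$. The width $C/T$ is handled either by applying the lemma with $T/C$ in place of $T$ (the hypothesis $M\ge (T/C)^2$ still holds for $C\ge 1$) or by covering the window with $O(C)$ windows of width $1/T$. This gives an inner sum $\ll \Na\mathfrak a/T$. Multiplying by $T$ yields $\sum \Na\mathfrak a\,|c(\mathfrak a)|^2\Lambda(\mathfrak a)$, as claimed.

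The only delicate point is this application of Lemma~\ref{lem:ideal-vm-short-interval}: the hypothesis $M\ge T^2$ is exactly what the lower cutoff $\Na\mathfrak a\ge T^2$ guarantees. The other ingredient is making sure the Cauchy--Schwarz weighting puts $\Lambda(\mathfrak b)$, rather than a count of ideals, in the inner sum, since it is the ideal prime-power count that the lemma controls. The ideals $\mathfrak b$ are unrestricted there, but including the ones outside $[T^2,x]$ only enlarges the bound, so this is harmless.
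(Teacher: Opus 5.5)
Your proof is correct and follows the same route as the paper. The paper invokes the Fourier-analytic reduction of Granville--Soundararajan and Granville--Harper--Soundararajan (majorant with compactly supported Fourier transform, expansion of the square, $2|c(\mathfrak a)c(\mathfrak b)|\le |c(\mathfrak a)|^2+|c(\mathfrak b)|^2$) and feeds in Lemma~\ref{lem:ideal-vm-short-interval} at $M=\Na\mathfrak a\ge T^2$, exactly as you do. One small detail to tidy: applying that lemma with $T/C$ in place of $T$ requires $T/C\ge 1$, and in the bounded range $T<C$ the window has bounded multiplicative length, so Lemma~\ref{lem:PIT} gives the inner sum $\ll \Na\mathfrak a\ll_C \Na\mathfrak a/T$ directly.
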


\begin{proof}
The Fourier-analytic reduction is the same as in
\cite[\S2.5.1, Lemma~2.5.3]{GS}; compare also \cite[Lemma~2.6]{GHS19}. In the ideal
setting, the required short-interval estimate
for the von Mangoldt function is Lemma~\ref{lem:ideal-vm-short-interval}.
\end{proof}

\subsection{Completion of the proof}

Throughout this subsection we assume the hypotheses of Proposition~\ref{prop:truncated}.
Recall that $\eta = 1/\log y$, $c_0 = 1 + 1/\log x$, and $y \ge T^2$.

We begin with the small-prime ratio bound.
\begin{lemma}\label{lem:S-ratio}
Let $s=\sigma+it$ with $\sigma \ge 1-\eta$ and $|t|\le T$. Then for all
$0\le u,v\le \eta$,
\[
\left|\frac{S(s-u-v/2)}{S(s+v/2)}\right| \ll_\kappa 1.
\]
\end{lemma}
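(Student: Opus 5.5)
The plan is to compare the two Euler products factor by factor. Since $s(\mathfrak a)$ is supported on $y$-friable ideals and is multiplicative, for $\Re(w)>1-\eta$ (where the products converge absolutely, since only finitely many primes are involved) we can write
\[
S(w)=\prod_{\Na\mathfrak p\le y}\exp\Bigl(\sum_{k\ge1}\frac{\Lambda_f(\mathfrak p^k)}{k\log\Na\mathfrak p}\,(\Na\mathfrak p)^{-kw}\Bigr).
\]
This expansion follows from the definition of $\Lambda_f$ by integrating the logarithmic derivative of each local factor, as in the proof of Proposition~\ref{prop:lambda_implication}. Setting $w_1=s-u-v/2$ and $w_2=s+v/2$, the ratio becomes
\[
\left|\frac{S(w_1)}{S(w_2)}\right|\le\exp\Bigl(\sum_{\Na\mathfrak p\le y}\sum_{k\ge1}\frac{|\Lambda_f(\mathfrak p^k)|}{k\log\Na\mathfrak p}\,\bigl|(\Na\mathfrak p)^{-kw_1}-(\Na\mathfrak p)^{-kw_2}\bigr|\Bigr).
\]
Using $|\Lambda_f(\mathfrak p^k)|\le\kappa\log\Na\mathfrak p$, it suffices to show that the double sum without the $\Lambda_f$ factor is $O(1)$.

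The main step is to bound each term using only the real parts. We have $w_1-w_2=-(u+v)$, which is real with $0\le u+v\le 2\eta$. Since $\Re w_2\ge\sigma\ge1-\eta$, and using $|e^{a}-1|\le|a|e^{|a|}$, we get
\[
\bigl|(\Na\mathfrak p)^{-kw_1}-(\Na\mathfrak p)^{-kw_2}\bigr|
=(\Na\mathfrak p)^{-k\Re w_2}\bigl|(\Na\mathfrak p)^{k(u+v)}-1\bigr|
\le (\Na\mathfrak p)^{-k(1-\eta)}\,2k\eta\log\Na\mathfrak p\;(\Na\mathfrak p)^{2k\eta}.
\]
For $k=1$ and $\Na\mathfrak p\le y$ we have $(\Na\mathfrak p)^{3\eta}\le e^{3}$, because $\eta=1/\log y$. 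So the $k=1$ contribution is bounded by
\[
\ll\kappa\,\eta\sum_{\Na\mathfrak p\le y}\frac{\log\Na\mathfrak p}{\Na\mathfrak p}\ll\kappa\,\eta\log y=\kappa .
\]
Here the Mertens-type bound $\sum_{\Na\mathfrak p\le y}\log\Na\mathfrak p/\Na\mathfrak p\ll\log y$ follows from Lemma~\ref{lem:PIT} by partial summation.

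The terms with $k\ge2$ are where care is needed. Instead of the derivative bound, I would use the trivial bound $|\cdot|\le 2(\Na\mathfrak p)^{-k(1-3\eta)}$, which gives a contribution of at most $2\kappa\sum_{\mathfrak p}\sum_{k\ge2}(\Na\mathfrak p)^{-k(1-3\eta)}/k$. This converges uniformly provided $1-3\eta>1/2+\epsilon$, that is, when $y$ is larger than an absolute constant. That condition holds under the hypotheses of Proposition~\ref{prop:truncated}, since $y\ge T^2\ge(\log x)^{2\kappa+4}$ and we may take $x$ large; small $x$ is handled trivially. Once this is in place, the sum is $O_\kappa(1)$ and exponentiating gives the claimed bound $\ll_\kappa 1$.

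The point I expect to be the main obstacle is making sure no dependence on $t$ creeps in. This works because the two arguments share the same imaginary part, so the difference of the prime-power terms is controlled by $|(\Na\mathfrak p)^{k(u+v)}-1|$ alone. I would state explicitly that this is the ideal analogue of the corresponding step in \cite[\S2.4.8]{GS}.
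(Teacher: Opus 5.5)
Your proof is correct and is essentially the paper's argument. The paper integrates $\Re(S'/S)$ along the horizontal segment from $s-u-v/2$ to $s+v/2$. It bounds $|S'/S|$ there by the nonnegative series $\kappa\sum_{\Na\mathfrak p\le y}\sum_{k\ge1}\log\Na\mathfrak p\,(\Na\mathfrak p)^{-k\sigma'}\ll_\kappa\log y$ over an interval of length at most $2\eta=2/\log y$, which is your termwise mean-value bound on the logarithmic Euler product written as an integral. Your trivial treatment of the $k\ge2$ terms, and the need for $y$ to exceed an absolute constant, are minor variations; the paper's sketch implicitly requires the same largeness of $y$.
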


\begin{proof}
This is the small-prime ratio estimate used in \cite[\S2.5.2]{GS}; compare also
the proof of \cite[Theorem~1.1]{GHS19}. Indeed, differentiating
\(\log S(\sigma+it)\) in \(\sigma\), bounding \(\Re(S'/S)\) by the corresponding
nonnegative Dirichlet series, and integrating over a length \(\asymp 1/\log y\)
interval gives the claim.
\end{proof}

For $0\le v\le \eta$ and $t\in\mathbb R$ set
\[
P_\pm(t):=P(c_0\pm v/2+it)
=\sum_{y<\Na\mathfrak c\le x/y}
\frac{\Lambda_\ell(\mathfrak c)}{(\Na\mathfrak c)^{c_0\pm v/2}}\,
(\Na\mathfrak c)^{-it}.
\]
The next proposition estimates the truncated integral appearing in
Proposition~\ref{prop:truncated}.

\begin{proposition}\label{prop:step3-mainbound}
Assume the hypotheses of Proposition~\ref{prop:truncated}, and let $0\le u,v\le \eta$.
Write $s=c_0+it$. Then
\begin{align*}
&\left|
\frac{1}{2\pi i}\int_{c_0-iT}^{c_0+iT}
S(s-u-v/2)\,L(s+v/2)\,P(s-v/2)\,P(s+v/2)\,
\frac{x^{\,s-u-v/2}}{s-u-v/2}\,ds
\right| \\
&\qquad\ll_\kappa
x^{1-u}\,m(v)\,
\max_{|t|\le T}\left|\frac{F(c_0+v/2+it)}{c_0+v/2+it}\right|,
\end{align*}
where
\[
m(v):=
\begin{cases}
\log x, & v=0,\\[4pt]
\min\!\left(\log x,\frac1v\right), & 0<v\le \eta.
\end{cases}
\]
\end{proposition}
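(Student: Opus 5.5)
The plan is to follow the Granville--Harper--Soundararajan argument as presented in \cite[\S2.5.2]{GS}: factor out the Euler-product part in sup norm, then bound the two Dirichlet polynomials with Cauchy--Schwarz and the mean-value estimate of Lemma~\ref{lem:MeanSquareGaussian}.

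\emph{Step 1 (extracting $F$).} On the line $s=c_0+it$ we have $|x^{s-u-v/2}|=x^{c_0-u-v/2}\ll x^{1-u}$. We write
\[
S(s-u-v/2)L(s+v/2)=\frac{S(s-u-v/2)}{S(s+v/2)}\,F(s+v/2),
\]
using Corollary~\ref{cor:factor}. Lemma~\ref{lem:S-ratio}, applied with $\sigma=c_0\ge 1-\eta$, bounds the ratio by $O_\kappa(1)$. Since $|s+v/2|\asymp|s-u-v/2|$ (the real parts are $\asymp1$ and the imaginary parts agree), we can bound
\[
\left|\frac{F(s+v/2)}{s-u-v/2}\right|\ll \max_{|t|\le T}\left|\frac{F(c_0+v/2+it)}{c_0+v/2+it}\right|.
\]
After pulling this maximum out, the integral is at most
\[
x^{1-u}\max_{|t|\le T}\left|\frac{F(c_0+v/2+it)}{c_0+v/2+it}\right|\int_{-T}^{T}|P_-(t)P_+(t)|\,dt .
\]

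\emph{Step 2 (Cauchy--Schwarz and mean values).} By Cauchy--Schwarz,
\[
\int_{-T}^{T}|P_-P_+|\,dt\le\Bigl(\int_{-T}^{T}|P_-|^2\,dt\Bigr)^{1/2}\Bigl(\int_{-T}^{T}|P_+|^2\,dt\Bigr)^{1/2}.
\]
Each $P_\pm$ has the form $\sum c(\mathfrak c)\Lambda(\mathfrak c)\Na\mathfrak c^{-it}$ with $|c(\mathfrak c)|\le\kappa\,\Na\mathfrak c^{-c_0\mp v/2}$. Its support lies in $y<\Na\mathfrak c\le x/y$, and $y\ge T^2$. So Lemma~\ref{lem:MeanSquareGaussian} applies and gives
\[
\int_{-T}^{T}|P_\pm|^2\,dt\ll\kappa^2\sum_{y<\Na\mathfrak c\le x/y}\frac{\Lambda(\mathfrak c)}{\Na\mathfrak c^{\,1+2/\log x\pm v}} .
\]

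\emph{Step 3 (evaluating the prime sums).} We use partial summation with Lemma~\ref{lem:PIT}, or equivalently Mertens (Lemma~\ref{lem:Mertens}), in the form $\sum_{\Na\mathfrak c\le X}\Lambda(\mathfrak c)/\Na\mathfrak c\ll\log X$.
\begin{itemize}
\item The ``$+$'' sum is $\ll\int_y^{x}u^{-1-v-2/\log x}\,du\ll\min(\log x,1/v)$.
\item The ``$-$'' sum has exponent $1-v+2/\log x$ and may lose the factor $u^{v}$. Since the range is restricted to $\Na\mathfrak c\le x/y$ and $v\le\eta=1/\log y$, we have $u^{v}\le (x/y)^{v}$, which is not bounded in general. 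So this sum is only $\ll\log x$.
\end{itemize}
Multiplying the two, $\bigl(\log x\cdot\min(\log x,1/v)\bigr)^{1/2}$ is not obviously $\le m(v)$. I expect this mismatch to be the main obstacle.

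\emph{Fix for the obstacle.} I would try rebalancing: shift the contour for the product so that both polynomials sit at $c_0+v/2$, which is the placement used in \cite[\S2.5.2]{GS}. Equivalently, I would absorb the factor $\Na\mathfrak c^{\,v}$ from $P_-$ into the weight $x^{s}$, paying $x^{v}$-type factors that are compensated by $x^{-u-v/2}$. Alternatively, for $v>0$ one can use the trivial bound on the range where $\Na\mathfrak c$ is near $x/y$. With both polynomials at the same real part, both mean values are $\ll\min(\log x,1/v)$, and the product is exactly $m(v)$. When $v=0$ both sums are $\ll\log x$ directly, giving $m(0)=\log x$.
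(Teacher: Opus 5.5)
Your Steps 1 and 2 follow the paper's argument: $F=SL$, Lemma~\ref{lem:S-ratio} for the $S$-ratio, pull out the maximum, Cauchy--Schwarz, then Lemma~\ref{lem:MeanSquareGaussian}. But the proposal as written does not close, and the gap is created in Step 1. There you bound $|x^{s-u-v/2}|=x^{c_0-u-v/2}$ by $x^{1-u}$. This throws away the factor $x^{-v/2}$, which is exactly what pays for evaluating $P_-$ to the left of $c_0$.

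Your Step 3 diagnosis also understates the problem. The ``$-$'' mean value is not $\ll\log x$. It is of size about $(x/y)^{v}/v$, and for $v=\eta$ this is $\eta^{-1}\exp(\log(x/y)/\log y)$, which is not bounded by any power of $\log x$ in general. The remedy of shifting the contour so that both polynomials sit at $c_0+v/2$ is not available. On every vertical line the factors $P(s-v/2)$ and $P(s+v/2)$ have real parts differing by $v$, and in any case the statement fixes the line $\Re s=c_0$.

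The missing step is to keep $|x^{s-u-v/2}|\le e\,x^{1-u-v/2}$ and to estimate the ``$-$'' sum sharply. Drop $\Na\mathfrak c^{-2/\log x}\le 1$ and use partial summation with Lemma~\ref{lem:PIT}:
\[
\int_{-T}^{T}|P_-(t)|^2\,dt\ll\kappa^2\sum_{y<\Na\mathfrak c\le x/y}\Lambda(\mathfrak c)\,\Na\mathfrak c^{\,v-1}\ll \kappa^2\,x^{v}\min\Bigl(\log x,\frac1v\Bigr).
\]
Meanwhile $\int_{-T}^{T}|P_+|^2\,dt\ll\kappa^2\min(\log x,1/v)$, as you say. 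Cauchy--Schwarz then gives $\int_{-T}^{T}|P_-P_+|\,dt\ll_\kappa x^{v/2}\min(\log x,1/v)$. The factor $x^{v/2}$ cancels against $x^{-v/2}$, leaving $x^{1-u}m(v)$.

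The $1/v$ half of the minimum in the $P_-$ bound is essential. The crude estimate $\Na\mathfrak c^{v}\le (x/y)^{v}$ combined with Mertens only yields $(\log x\cdot\min(\log x,1/v))^{1/2}$ after the cancellation. That exceeds $m(v)$ once $v>1/\log x$, and it would spoil the $d\sigma/\sigma$ integration in Proposition~\ref{prop:ideal-halasz-integral}.

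Your alternative of absorbing $\Na\mathfrak c^{v}$ into $x^{s}$ points the right way, but the accounting is off. The only budget is $x^{-v/2}$, since $x^{-u}$ must survive on the right-hand side, and it offsets only the square-root factor $x^{v/2}$ that comes out of Cauchy--Schwarz.
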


\begin{proof}
Using $F=SL$, bound the $S$-ratio by Lemma~\ref{lem:S-ratio}, pull out
$\max_{|t|\le T}|F(c_0+v/2+it)/(c_0+v/2+it)|$, and apply Cauchy--Schwarz to
the remaining Dirichlet polynomials. Lemma~\ref{lem:MeanSquareGaussian} gives the
required $L^2$-estimate over ideals, and the stated bound follows.
\end{proof}

\begin{proposition}[Integral-form ideal Hal\'asz bound]\label{prop:ideal-halasz-integral}
Let $\kappa\ge 1$ and let $f:\mathcal I\to\C$ be multiplicative. Define
\[
F(s):=\sum_{\mathfrak a\in\mathcal I}\frac{f(\mathfrak a)}{(\Na\mathfrak a)^s}\qquad(\Re(s)>1).
\]
Assume that the associated von Mangoldt coefficients $\Lambda_f$ of $f$
(as in Definition~\ref{def:Lambda_f_coeff}) satisfy
\[
|\Lambda_f(\mathfrak a)|\le \kappa\,\Lambda(\mathfrak a)
\]
for all $\mathfrak a\in\mathcal I$, where $\Lambda$ is the ideal von Mangoldt
function from Definition~\ref{def:ideal-vonmangoldt}. Then for all $x\ge 3$,
\[
\sum_{\Na\mathfrak a\le x} f(\mathfrak a)
\ \ll_{\kappa}\
\frac{x}{\log x}\int_{1/\log x}^{1}
\max_{|t|\le (\log x)^{\kappa}}
\left|\frac{F(1+\sigma+it)}{1+\sigma+it}\right|
\frac{d\sigma}{\sigma}
\ +\
\frac{x}{\log x}(\log\log x)^{\kappa}.
\]
\end{proposition}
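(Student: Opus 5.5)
We will deduce Proposition~\ref{prop:ideal-halasz-integral} from Proposition~\ref{prop:truncated} and Proposition~\ref{prop:step3-mainbound}, following the structure of \cite[\S2.5]{GS} and \cite[Theorem~1.1]{GHS19}.

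\emph{Parameter choices.} For $x$ large in terms of $\kappa$, take
\[
T:=(\log x)^{\kappa+2},\qquad y:=T^2=(\log x)^{2\kappa+4}.
\]
Then $y\ge T^2$, $T\ge(\log x)^{\kappa+2}$ and $x>y$, so Proposition~\ref{prop:truncated} applies. Its error terms are
\[
\frac{x}{\log x}(\log y)^\kappa\ll_\kappa \frac{x}{\log x}(\log\log x)^\kappa
\qquad\text{and}\qquad
\frac{x(\log x)^{\kappa+2}}{T\log y}\ll \frac{x}{\log\log x}.
\]
The second bound is too weak as it stands, so we take $T$ larger, e.g.\ $T:=(\log x)^{\kappa+3}$ with $y=T^2$. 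The second error then becomes $\ll x/(\log x\log\log x)$. The first error is still $\ll_\kappa x(\log\log x)^\kappa/\log x$. For bounded $x$ the claim is trivial from $|f|\le d_\kappa$, using Proposition~\ref{prop:lambda_implication}.

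\emph{Main term.} We insert the bound of Proposition~\ref{prop:step3-mainbound} into the triple integral over $0\le\alpha,\beta\le\eta$, with $u=\alpha$ and $v=\beta$. The $\alpha$-integral gives $\int_0^\eta x^{1-\alpha}\,d\alpha\le x/\log x$. The main term is therefore
\[
\ll_\kappa \frac{x}{\log x}\int_0^\eta m(v)\max_{|t|\le T}\left|\frac{F(c_0+v/2+it)}{c_0+v/2+it}\right|dv .
\]
Substitute $\sigma=1/\log x+v/2$, so that $c_0+v/2=1+\sigma$. On $v\le 2/\log x$ we have $m(v)\le\log x\ll 1/\sigma$. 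For larger $v$, $1/v\ll 1/\sigma$. Hence $m(v)\,dv\ll d\sigma/\sigma$ with $\sigma$ ranging over $[1/\log x,\,1/\log x+\eta/2]\subset[1/\log x,1]$. This produces the integral in the statement, except that the maximum is over $|t|\le T$ rather than $|t|\le(\log x)^\kappa$.

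\emph{Main obstacle: reducing the $t$-range.} We expect this to be the hardest step. The plan is to bound the contribution of $(\log x)^\kappa<|t|\le T$ trivially:
\[
|F(1+\sigma+it)|\le \zeta_{\Q(i)}(1+\sigma)^\kappa\ll_\kappa \sigma^{-\kappa}.
\]
Dividing by $|1+\sigma+it|\ge|t|>(\log x)^\kappa$ gives
\[
\left|\frac{F(1+\sigma+it)}{1+\sigma+it}\right|\ll_\kappa \frac{\sigma^{-\kappa}}{(\log x)^{\kappa}}\le 1\qquad(\sigma\ge 1/\log x).
\]
This uniform $O(1)$ bound on large $t$ contributes at most
\[
\frac{x}{\log x}\int_{1/\log x}^{\eta}\frac{d\sigma}{\sigma}\ll \frac{x}{\log x}\log\log x,
\]
which is absorbed in $(x/\log x)(\log\log x)^\kappa$ since $\kappa\ge1$. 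Thus the maximum over $|t|\le T$ is at most the maximum over $|t|\le(\log x)^\kappa$ plus $O_\kappa(1)$, and the proposition follows.

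The remaining point to check is that Lemma~\ref{lem:S-ratio} and Proposition~\ref{prop:step3-mainbound} hold uniformly for this larger $T$. They do, since only $y\ge T^2$ was used.
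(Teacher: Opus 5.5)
Your proposal is correct and follows essentially the same route as the paper. You use the same final parameters $T=(\log x)^{\kappa+3}$, $y=T^2$, the same application of Propositions~\ref{prop:truncated} and~\ref{prop:step3-mainbound} followed by the $\alpha$-integration and the substitution $\sigma=1/\log x+\beta/2$, and the same trivial bound $|F(1+\sigma+it)|\ll_\kappa(\log x)^\kappa$ to cut the $t$-range down to $|t|\le(\log x)^\kappa$ at a cost of $O\bigl(\frac{x}{\log x}\log\log x\bigr)$.
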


\begin{proof}[Proof of Proposition~\ref{prop:ideal-halasz-integral}]

We may assume $x$ is sufficiently large (depending on $\kappa$), since for bounded
$x$ the claim follows after enlarging the implied constant. Fix parameters
\[
T := (\log x)^{\kappa+3}, \qquad y := T^2,
\]
so that $y \ge T^2$ and $T \ge (\log x)^{\kappa+2}$, and for large $x$ we also have
$x>y$. Put $\eta := 1/\log y$ and $c_0 := 1 + 1/\log x$.

Start from the truncated integral representation, Proposition~\ref{prop:truncated}:
\begin{align*}
\sum_{\Na\mathfrak a \le x} f(\mathfrak a)
&=
\int_0^\eta \int_0^\eta
\frac{1}{2\pi i}
\int_{c_0-iT}^{c_0+iT}
S(s-\alpha-\beta/2)\,L(s+\beta/2)\\
&\qquad\qquad\times
P(s-\beta/2)\,P(s+\beta/2)\,
\frac{x^{\,s-\alpha-\beta/2}}{s-\alpha-\beta/2}\,ds\,d\beta\,d\alpha \\
&\qquad\qquad
+ O\!\left(\frac{x}{\log x}(\log y)^\kappa\right)
+ O\!\left(\frac{x(\log x)^{\kappa+2}}{T\log y}\right).
\end{align*}
Taking absolute values and applying Proposition~\ref{prop:step3-mainbound} (with $u=\alpha$, $v=\beta$),
we obtain
\begin{align*}
\left|\sum_{\Na\mathfrak a \le x} f(\mathfrak a)\right|
&\ll_\kappa
\int_0^\eta \int_0^\eta
x^{1-\alpha}\, m(\beta)\,
\max_{|t|\le T}
\left|\frac{F(c_0+\beta/2+it)}{c_0+\beta/2+it}\right|
\,d\beta\,d\alpha \\
&\qquad
+ O\!\left(\frac{x}{\log x}(\log y)^\kappa\right)
+ O\!\left(\frac{x(\log x)^{\kappa+2}}{T\log y}\right).
\end{align*}
Here $m(\beta)=\log x$ if $\beta=0$ and $m(\beta)=\min(\log x,1/\beta)$ if
$0<\beta\le \eta$.

We first integrate in $\alpha$:
\[
\int_0^\eta x^{1-\alpha}\,d\alpha
= x \int_0^\eta e^{-\alpha\log x}\,d\alpha
= x\frac{1-e^{-\eta\log x}}{\log x}
\ll \frac{x}{\log x}.
\]
Hence
\begin{align*}
\left|\sum_{\Na\mathfrak a \le x} f(\mathfrak a)\right|
&\ll_\kappa
\frac{x}{\log x}\int_0^\eta
m(\beta)\,
\max_{|t|\le T}
\left|\frac{F(c_0+\beta/2+it)}{c_0+\beta/2+it}\right|
\,d\beta \\
&\qquad
+ O\!\left(\frac{x}{\log x}(\log y)^\kappa\right)
+ O\!\left(\frac{x(\log x)^{\kappa+2}}{T\log y}\right).
\end{align*}

Now set
\[
\sigma := \frac{1}{\log x} + \frac{\beta}{2},
\qquad\text{so that}\qquad
c_0+\beta/2 = 1+\sigma,\quad d\beta = 2\,d\sigma.
\]
Note that for all $\beta\in[0,\eta]$ we have
\[
m(\beta) \ll \frac{1}{\beta + 1/\log x} \asymp \frac{1}{\sigma},
\]
so $m(\beta)\,d\beta \ll d\sigma/\sigma$. Therefore
\begin{align*}
\int_0^\eta
m(\beta)\,
\max_{|t|\le T}\left|\frac{F(c_0+\beta/2+it)}{c_0+\beta/2+it}\right|\,d\beta
&\ll
\int_{1/\log x}^{\,1/\log x + \eta/2}
\max_{|t|\le T}\left|\frac{F(1+\sigma+it)}{1+\sigma+it}\right|
\frac{d\sigma}{\sigma}\\
&\le
\int_{1/\log x}^{1}
\max_{|t|\le T}\left|\frac{F(1+\sigma+it)}{1+\sigma+it}\right|
\frac{d\sigma}{\sigma}.
\end{align*}

We now reduce the $t$-range in the maximum. For $\sigma\ge 1/\log x$ we have the
trivial bound $|F(1+\sigma+it)| \ll_\kappa (\log x)^\kappa$ (e.g. by comparison with
$\zeta_{\Q(i)}(1+\sigma)^\kappa$), hence for $|t|>(\log x)^\kappa$,
\[
\left|\frac{F(1+\sigma+it)}{1+\sigma+it}\right|
\ll_\kappa \frac{(\log x)^\kappa}{|t|}
\le 1.
\]
Therefore
\[
\max_{|t|\le T}\left|\frac{F(1+\sigma+it)}{1+\sigma+it}\right|
\le
\max_{|t|\le (\log x)^\kappa}\left|\frac{F(1+\sigma+it)}{1+\sigma+it}\right|
+ O_\kappa(1),
\]
and the $O_\kappa(1)$ contributes
\[
\frac{x}{\log x}\int_{1/\log x}^1 \frac{d\sigma}{\sigma}
\ll \frac{x}{\log x}\log\log x
\ll_\kappa \frac{x}{\log x}(\log\log x)^\kappa
\qquad(\kappa\ge 1).
\]

Putting everything together, we obtain
\begin{align*}
\sum_{\Na\mathfrak a \le x} f(\mathfrak a)
&\ll_\kappa
\frac{x}{\log x}
\int_{1/\log x}^1
\max_{|t|\le (\log x)^\kappa}\left|\frac{F(1+\sigma+it)}{1+\sigma+it}\right|
\frac{d\sigma}{\sigma}\\
&\qquad
+ O\!\left(\frac{x}{\log x}(\log y)^\kappa\right)
+ O\!\left(\frac{x(\log x)^{\kappa+2}}{T\log y}\right)
+ O\!\left(\frac{x}{\log x}(\log\log x)^\kappa\right).
\end{align*}
With our choices $T=(\log x)^{\kappa+3}$ and $y=T^2$, we have $\log y \asymp \log\log x$,
so $(\log y)^\kappa \asymp (\log\log x)^\kappa$, and also
\[
\frac{x(\log x)^{\kappa+2}}{T\log y}
=
\frac{x(\log x)^{\kappa+2}}{(\log x)^{\kappa+3}\log y}
=
\frac{x}{\log x\log y}
\ll \frac{x}{\log x}.
\]
Hence all error terms are $\ll_\kappa \frac{x}{\log x}(\log\log x)^\kappa$, proving
Proposition~\ref{prop:ideal-halasz-integral}.
\end{proof}

\begin{proof}[Proof of Theorem~\ref{thm:ideal-halasz-M}]

Let $c_0:=1+1/\log x$, let $V:=(\log x)^\kappa$, and define
\[
G(s):=\frac{F(s)}{s}.
\]
Consider the rectangle
\[
\mathcal R := \{u+iv:\ c_0 \le u \le 2,\ |v|\le V\}.
\]
By the maximum modulus principle, $\max_{s\in\mathcal R}|G(s)|$ occurs on $\partial\mathcal R$.

If this maximum occurs on the horizontal sides $|v|=V$, then for $u\in[c_0,2]$ we have
$|F(u\pm iV)|\ll_\kappa (\log x)^\kappa$ and $|u\pm iV|\asymp V$, hence $|G(u\pm iV)|\ll_\kappa 1$.
If the maximum occurs on the right side $u=2$, then $|F(2+it)|\ll_\kappa 1$ and $|2+it|\gg 1$,
so again $|G(2+it)|\ll_\kappa 1$. In either case,
\[
\max_{|t|\le V}\left|\frac{F(1+\sigma+it)}{1+\sigma+it}\right|
\ll_\kappa 1
\quad (1/\log x\le \sigma\le 1),
\]
and inserting this into Proposition~\ref{prop:ideal-halasz-integral} gives
\[
\sum_{\Na\mathfrak a\le x} f(\mathfrak a)
\ll_\kappa \frac{x}{\log x}\log\log x + \frac{x}{\log x}(\log\log x)^\kappa
\ll_\kappa \frac{x}{\log x}(\log\log x)^\kappa,
\]
which is already covered by the claimed bound in Theorem~\ref{thm:ideal-halasz-M}.

Thus we may assume that $\max_{s\in\mathcal R}|G(s)|$ occurs on the left side $u=c_0$.
By the definition of $M(x)$,
\[
\max_{|t|\le V}|G(c_0+it)| = e^{-M(x)}(\log x)^\kappa.
\]
Since every point $1+\sigma+it$ with $1/\log x\le \sigma\le 1$ and $|t|\le V$ lies in $\mathcal R$,
we deduce the uniform bound
\[
\max_{|t|\le V}|G(1+\sigma+it)| \le e^{-M(x)}(\log x)^\kappa.
\]
On the other hand, trivially
\[
|G(1+\sigma+it)| \le |F(1+\sigma+it)| \ll_\kappa \zeta_{\Q(i)}(1+\sigma)^\kappa \ll_\kappa \sigma^{-\kappa}.
\]
Therefore
\[
\max_{|t|\le V}\left|\frac{F(1+\sigma+it)}{1+\sigma+it}\right|
=
\max_{|t|\le V}|G(1+\sigma+it)|
\ll_\kappa \min\!\big(e^{-M(x)}(\log x)^\kappa,\ \sigma^{-\kappa}\big).
\]

Insert this into Proposition~\ref{prop:ideal-halasz-integral}. If $M(x)\le 0$, then
\[
\int_{1/\log x}^{1}
\Bigl(\max_{|t|\le V}\Bigl|\frac{F(1+\sigma+it)}{1+\sigma+it}\Bigr|\Bigr)
\frac{d\sigma}{\sigma}
\ll_\kappa
\int_{1/\log x}^{1}\sigma^{-\kappa-1}\,d\sigma
\ll_\kappa (\log x)^\kappa.
\]
Since in this case $M_+(x)=0$, this is
\[
\ll_\kappa (1+M_+(x))e^{-M_+(x)}(\log x)^\kappa.
\]
If $M(x)>0$, put
\[
\sigma_0 := e^{M(x)/\kappa}/\log x.
\]
Then
\begin{align*}
\int_{1/\log x}^{1}
\Bigl(\max_{|t|\le V}\Bigl|\frac{F(1+\sigma+it)}{1+\sigma+it}\Bigr|\Bigr)
\frac{d\sigma}{\sigma}
&\ll_\kappa
(\log x)^\kappa e^{-M(x)}
\int_{1/\log x}^{\min(1,\sigma_0)} \frac{d\sigma}{\sigma}\\
&\qquad+
\int_{\min(1,\sigma_0)}^{1} \sigma^{-\kappa-1}\,d\sigma.
\end{align*}
Splitting the integral at \(\sigma=\min(1,\sigma_0)\), we obtain
\[
\int_{1/\log x}^{1}
\Bigl(\max_{|t|\le V}\Bigl|\frac{F(1+\sigma+it)}{1+\sigma+it}\Bigr|\Bigr)
\frac{d\sigma}{\sigma}
\ll_\kappa (1+M(x))e^{-M(x)}(\log x)^\kappa.
\]
Since now $M_+(x)=M(x)$, this gives 
\[
\int_{1/\log x}^{1}
\Bigl(\max_{|t|\le V}\Bigl|\frac{F(1+\sigma+it)}{1+\sigma+it}\Bigr|\Bigr)
\frac{d\sigma}{\sigma}
\ll_\kappa (1+M_+(x))e^{-M_+(x)}(\log x)^\kappa.
\]
Substituting these bounds into Proposition~\ref{prop:ideal-halasz-integral} yields
Theorem~\ref{thm:ideal-halasz-M}.
\end{proof}

\section{Pretentious reformulations}\label{sec:applications}

We first restate Theorem~\ref{thm:ideal-halasz-M} in pretentious-distance form, and then
deduce the $1$-bounded theorem stated in the introduction.

\subsection{Pretentious restatement}

\begin{lemma}[Euler product bound in pretentious distance]\label{lem:pret_bound_F}
Assume the hypotheses of Theorem~\ref{thm:ideal-halasz-M}. Let $x\ge 3$, set
\[
c_0:=1+\frac1{\log x},
\]
and let $F(s)$ be as in Theorem~\ref{thm:ideal-halasz-M}.
Then for every $t\in\R$,
\[
|F(c_0+it)|\ \ll_{\kappa}\ (\log x)^{\kappa}\exp\!\Big(-\mathbb D_{\kappa}(f,\Na^{it};x)^2\Big).
\]
\end{lemma}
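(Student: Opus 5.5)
We take logarithms of the Euler product and compare with the logarithm of $\zeta_{\Q(i)}(c_0)^\kappa$. By Definition~\ref{def:Lambda_f_coeff}, for $\Re s>1$ we have
\[
\log F(s)=\sum_{\mathfrak a}\frac{\Lambda_f(\mathfrak a)}{(\Na\mathfrak a)^s\log \Na\mathfrak a}.
\]
This follows by integrating $-F'/F$ from $s$ to $+\infty$ along a horizontal ray, using $F(\sigma)\to 1$ as $\sigma\to\infty$. The branch issue is harmless, since we only need $\log|F|=\Re\log F$. Hence
\[
\log|F(c_0+it)|=\Re\sum_{\mathfrak p}\sum_{k\ge1}\frac{\Lambda_f(\mathfrak p^k)}{k\log\Na\mathfrak p}\,(\Na\mathfrak p)^{-k(c_0+it)}.
\]
Note that $\Lambda_f(\mathfrak p)=f(\mathfrak p)\log\Na\mathfrak p$, which is the case $m=1$ of \eqref{eq:lambda-f-recursion}. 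In particular $|f(\mathfrak p)|\le\kappa$, so $\mathbb D_\kappa$ is defined with $g=\Na^{it}$.

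The terms with $k\ge2$ are bounded in absolute value by
\[
\kappa\sum_{\mathfrak p}\sum_{k\ge2}(\Na\mathfrak p)^{-k}/k\ll\kappa,
\]
so they contribute $O_\kappa(1)$. The $k=1$ terms give
\[
\sum_{\mathfrak p}\Re\bigl(f(\mathfrak p)(\Na\mathfrak p)^{-it}\bigr)(\Na\mathfrak p)^{-c_0}.
\]
Write $\Re(f(\mathfrak p)\Na\mathfrak p^{-it})=\kappa-\bigl(\kappa-\Re(\cdot)\bigr)$, noting that the bracket is nonnegative. We split the primes at $\Na\mathfrak p=x$.

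For $\Na\mathfrak p\le x$ we have $(\Na\mathfrak p)^{-c_0}=(\Na\mathfrak p)^{-1}e^{-\log\Na\mathfrak p/\log x}\ge e^{-1}(\Na\mathfrak p)^{-1}$. Moreover $(\Na\mathfrak p)^{-1}-(\Na\mathfrak p)^{-c_0}\le (\log\Na\mathfrak p)/(\Na\mathfrak p\log x)$, and summing this over $\Na\mathfrak p\le x$ gives $O(1)$ by partial summation from Lemma~\ref{lem:PIT}. Since $0\le \kappa-\Re(\cdot)\le2\kappa$, we may therefore replace $(\Na\mathfrak p)^{-c_0}$ by $(\Na\mathfrak p)^{-1}$ at cost $O(\kappa)$. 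This yields
\[
\kappa\sum_{\Na\mathfrak p\le x}\frac1{\Na\mathfrak p}-\mathbb D_\kappa(f,\Na^{it};x)^2+O(\kappa).
\]
For $\Na\mathfrak p>x$ we simply bound $|f(\mathfrak p)|\le\kappa$. Then
\[
\sum_{\Na\mathfrak p>x}(\Na\mathfrak p)^{-c_0}\ll 1
\]
by partial summation from Lemma~\ref{lem:PIT}, or via Lemma~\ref{lem:Mertens} over dyadic ranges with the factor $e^{-\log\Na\mathfrak p/\log x}$.

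Combining everything, and using Lemma~\ref{lem:Mertens} in the form $\kappa\sum_{\Na\mathfrak p\le x}1/\Na\mathfrak p=\kappa\log\log x+O(\kappa)$, we get
\[
\log|F(c_0+it)|\le\kappa\log\log x-\mathbb D_\kappa(f,\Na^{it};x)^2+O_\kappa(1).
\]
Exponentiating gives the claim.

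The main point requiring care is the passage from $(\Na\mathfrak p)^{-c_0}$ to $(\Na\mathfrak p)^{-1}$ for $\Na\mathfrak p\le x$, together with the tail $\Na\mathfrak p>x$. Both reduce to the Chebyshev-type bound of Lemma~\ref{lem:PIT} via partial summation. Everything else is bookkeeping.
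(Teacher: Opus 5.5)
Your proposal is correct and follows essentially the same route as the paper. You take $\log F$ via the coefficients $\Lambda_f$, discard the $k\ge 2$ terms as $O_\kappa(1)$, control both the tail $\Na\mathfrak p>x$ and the replacement of $(\Na\mathfrak p)^{-c_0}$ by $(\Na\mathfrak p)^{-1}$ through partial summation from Lemma~\ref{lem:PIT}, and finish with Lemma~\ref{lem:Mertens}; the only cosmetic differences are that you obtain $\log F$ by integrating $-F'/F$ along a horizontal ray rather than factor by factor, and that you derive $|f(\mathfrak p)|\le\kappa$ explicitly from $\Lambda_f(\mathfrak p)=f(\mathfrak p)\log\Na\mathfrak p$.
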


\begin{proof}
For a prime ideal $\mathfrak p$, write
\[
F_{\mathfrak p}(z):=\sum_{k\ge 0} f(\mathfrak p^k)z^k .
\]
By the definition of the associated von Mangoldt coefficients,
\[
z\frac{F'_{\mathfrak p}(z)}{F_{\mathfrak p}(z)}
=
\sum_{k\ge 1}\frac{\Lambda_f(\mathfrak p^k)}{\log\Na\mathfrak p}z^k
\qquad (|z|<1).
\]
Integrating from $0$ to $z$ and using $F_{\mathfrak p}(0)=1$ gives
\[
\log F_{\mathfrak p}(z)
=
\sum_{k\ge 1}\frac{\Lambda_f(\mathfrak p^k)}{k\log\Na\mathfrak p}z^k .
\]
Taking $z=(\Na\mathfrak p)^{-s}$ and summing over prime ideals, the hypothesis
$|\Lambda_f(\mathfrak a)|\le \kappa\Lambda(\mathfrak a)$ gives, absolutely for $\Re(s)>1$,
\[
\log F(s)=
\sum_{\mathfrak p}\sum_{k\ge 1}
\frac{\Lambda_f(\mathfrak p^k)}{k\log\Na\mathfrak p\,(\Na\mathfrak p)^{ks}}.
\]
The $k=1$ term is $f(\mathfrak p)(\Na\mathfrak p)^{-s}$, so
\[
\log |F(s)|
=
\sum_{\mathfrak p}\Re\!\left(\frac{f(\mathfrak p)}{(\Na\mathfrak p)^s}\right)
+
O_\kappa\!\left(
\sum_{\mathfrak p}\sum_{k\ge 2}\frac{1}{k(\Na\mathfrak p)^{k\Re(s)}}
\right).
\]
The double sum over $k\ge 2$ is $O_\kappa(1)$ when $\Re(s)=c_0>1$. Thus
\[
\log |F(c_0+it)|
=
\sum_{\mathfrak p}\Re\!\left(\frac{f(\mathfrak p)}{(\Na\mathfrak p)^{c_0+it}}\right)
+O_\kappa(1).
\]

We now split into $\Na\mathfrak p\le x$ and $\Na\mathfrak p>x$. Since
$\log\Na\mathfrak p\ge\log x$ in the second range, Lemma~\ref{lem:PIT} and partial summation give
\[
\sum_{\Na\mathfrak p>x}\frac{1}{(\Na\mathfrak p)^{c_0}}
\le
\frac1{\log x}
\sum_{\Na\mathfrak p>x}\frac{\log\Na\mathfrak p}{(\Na\mathfrak p)^{c_0}}
\ll
\frac1{\log x}\cdot\frac{x^{1-c_0}}{c_0-1}
\ll 1.
\]
In the first range,
\[
\sum_{\Na\mathfrak p\le x}
\Re\!\left(\frac{f(\mathfrak p)}{(\Na\mathfrak p)^{c_0+it}}\right)
=
\sum_{\Na\mathfrak p\le x}
\frac{\Re\!\bigl(f(\mathfrak p)(\Na\mathfrak p)^{-it}\bigr)}{\Na\mathfrak p}
+O_\kappa(1),
\]
because $|f(\mathfrak p)|\le\kappa$ and
$1-(\Na\mathfrak p)^{-1/\log x}\ll (\log\Na\mathfrak p)/\log x$.
Therefore
\[
\log |F(c_0+it)|
=
\sum_{\Na\mathfrak p\le x}
\frac{\Re\!\bigl(f(\mathfrak p)(\Na\mathfrak p)^{-it}\bigr)}{\Na\mathfrak p}
+O_\kappa(1).
\]
By the definition of $\mathbb D_\kappa$,
\[
\sum_{\Na\mathfrak p\le x}
\frac{\Re\!\bigl(f(\mathfrak p)(\Na\mathfrak p)^{-it}\bigr)}{\Na\mathfrak p}
=
\kappa\sum_{\Na\mathfrak p\le x}\frac{1}{\Na\mathfrak p}
-
\mathbb D_\kappa(f,\Na^{it};x)^2.
\]
Using Lemma~\ref{lem:Mertens},
\[
\sum_{\Na\mathfrak p\le x}\frac{1}{\Na\mathfrak p}
=
\log\log x+O(1),
\]
so
\[
\log |F(c_0+it)|
\le
\kappa\log\log x-\mathbb D_\kappa(f,\Na^{it};x)^2+O_\kappa(1).
\]
Exponentiating completes the proof.
\end{proof}

\begin{corollary}[Pretentious form of the ideal Hal\'asz theorem]\label{cor:pret_cor2}
Assume the hypotheses of Theorem~\ref{thm:ideal-halasz-M}.
For $x\ge 3$ let
\[
M_{\mathrm{pret}}(x):=\min_{|t|\le (\log x)^\kappa}\mathbb D_\kappa(f,\Na^{it};x)^2,
\]
with $\mathbb D_\kappa$ as in Definition~\ref{def:pret_distance}.
Then
\[
\sum_{\Na\mathfrak a\le x} f(\mathfrak a)
\ \ll_\kappa\
(1+M_{\mathrm{pret}}(x))e^{-M_{\mathrm{pret}}(x)}\,x(\log x)^{\kappa-1}
\ +\ \frac{x}{\log x}(\log\log x)^\kappa .
\]
\end{corollary}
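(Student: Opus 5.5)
The plan is to derive Corollary~\ref{cor:pret_cor2} directly from Theorem~\ref{thm:ideal-halasz-M} by lower-bounding $M(x)$ via Lemma~\ref{lem:pret_bound_F}. Set $c_0=1+1/\log x$ and $V=(\log x)^\kappa$. For every $|t|\le V$ the modulus of $c_0+it$ is at least $1$. Lemma~\ref{lem:pret_bound_F} then gives
\[
\left|\frac{F(c_0+it)}{c_0+it}\right|\le |F(c_0+it)|\le C_\kappa(\log x)^\kappa\exp\!\bigl(-\mathbb D_\kappa(f,\Na^{it};x)^2\bigr)\le C_\kappa(\log x)^\kappa e^{-M_{\mathrm{pret}}(x)},
\]
because the minimum defining $M_{\mathrm{pret}}(x)$ is taken over exactly the range $|t|\le(\log x)^\kappa$ used in the definition of $M(x)$. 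Taking the maximum over $t$ and comparing with the definition of $M(x)$ yields $e^{-M(x)}\le C_\kappa e^{-M_{\mathrm{pret}}(x)}$. Hence $M(x)\ge M_{\mathrm{pret}}(x)-\log C_\kappa$, and since $M_{\mathrm{pret}}\ge0$, also $M_+(x)\ge M_{\mathrm{pret}}(x)-c_\kappa$ with $c_\kappa=\max(0,\log C_\kappa)$.

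Next, transfer the bound of Theorem~\ref{thm:ideal-halasz-M}. The function $g(u)=(1+u)e^{-u}$ is decreasing on $[0,\infty)$, since $g'(u)=-ue^{-u}\le0$. Put $u_0=\max(0,M_{\mathrm{pret}}(x)-c_\kappa)$. Then $M_+(x)\ge u_0$, so $g(M_+(x))\le g(u_0)$. We need $g(u_0)\ll_\kappa g(M_{\mathrm{pret}}(x))$, and there are two cases.
\begin{itemize}
\item If $M_{\mathrm{pret}}\le c_\kappa$, then $g(u_0)\le1$ while $g(M_{\mathrm{pret}})\ge g(c_\kappa)\gg_\kappa 1$.
\item Otherwise $u_0=M_{\mathrm{pret}}-c_\kappa$, and
\[
g(u_0)=(1+M_{\mathrm{pret}}-c_\kappa)\,e^{c_\kappa}e^{-M_{\mathrm{pret}}}\le e^{c_\kappa}\,g(M_{\mathrm{pret}}).
\]
\end{itemize}
Substituting into Theorem~\ref{thm:ideal-halasz-M} gives the claimed bound. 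The error term $\frac{x}{\log x}(\log\log x)^\kappa$ carries over unchanged.

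No step is a real obstacle. The one point needing care is that the constant lost in Lemma~\ref{lem:pret_bound_F} appears as an additive shift in $M$. It can be absorbed only because $(1+u)e^{-u}$ is monotone and changes by at most a bounded factor under a bounded shift of $u$. The hypotheses of Lemma~\ref{lem:pret_bound_F} also need $|f(\mathfrak p)|\le\kappa$ for the distance $\mathbb D_\kappa$ to be meaningful. This holds because $f(\mathfrak p)\log\Na\mathfrak p=\Lambda_f(\mathfrak p)$, by the recursion \eqref{eq:lambda-f-recursion} with $m=1$, so $|f(\mathfrak p)|\le\kappa$.
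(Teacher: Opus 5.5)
Your proposal is correct and follows essentially the same route as the paper: use Lemma~\ref{lem:pret_bound_F} together with $|c_0+it|\ge 1$ to get $M(x)\ge M_{\mathrm{pret}}(x)-O_\kappa(1)$, hence the same lower bound for $M_+(x)$, and then absorb the bounded shift using the monotonicity of $(1+u)e^{-u}$. Your explicit two-case handling of that shift, and your check via the recursion that $|f(\mathfrak p)|\le\kappa$ (so the lemma applies), simply spell out details the paper treats briefly.
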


\begin{proof}[Proof of Corollary~\ref{cor:pret_cor2}]
Let $V=(\log x)^\kappa$ and $c_0=1+1/\log x$. Let $M(x)$ and $M_+(x)$ be as in
Theorem~\ref{thm:ideal-halasz-M}, so
\[
\max_{|t|\le V}\left|\frac{F(c_0+it)}{c_0+it}\right|
=e^{-M(x)}(\log x)^\kappa.
\]
By Lemma~\ref{lem:pret_bound_F} and $|c_0+it|\ge 1$,
\[
e^{-M(x)}(\log x)^\kappa
\ll_{\kappa} (\log x)^\kappa \exp\!\bigl(-M_{\mathrm{pret}}(x)\bigr),
\]
so
\[
M(x)\ge M_{\mathrm{pret}}(x)-O_{\kappa}(1).
\]
Hence also
\[
M_+(x)\ge M_{\mathrm{pret}}(x)-O_{\kappa}(1).
\]
Since \(u\mapsto (1+u)e^{-u}\) is decreasing on \([0,\infty)\), this lower bound for
\(M_+(x)\) implies, when \(M_{\mathrm{pret}}(x)\) is larger than a sufficiently large
constant depending only on \(\kappa\),
\begin{align*}
(1+M_+(x))e^{-M_+(x)}
&\le
\bigl(1+M_{\mathrm{pret}}(x)+O_\kappa(1)\bigr)
\exp\!\bigl(-M_{\mathrm{pret}}(x)+O_\kappa(1)\bigr)\\
&\ll_\kappa
(1+M_{\mathrm{pret}}(x))e^{-M_{\mathrm{pret}}(x)}.
\end{align*}
If $M_{\mathrm{pret}}(x)=O_\kappa(1)$, the same bound is absorbed into the implied constant. Thus
\[
(1+M_+(x))e^{-M_+(x)}
\ll_\kappa
(1+M_{\mathrm{pret}}(x))e^{-M_{\mathrm{pret}}(x)}.
\]
Substituting the $M_+$-form of Theorem~\ref{thm:ideal-halasz-M} gives the desired bound.
\end{proof}

\begin{lemma}[Tail estimate for \(h\)]\label{lem:h_tail}
Let $h:\mathcal I\to\C$ be multiplicative, assume that
\[
h(\mathfrak p)=0
\qquad\text{and}\qquad
|h(\mathfrak p^k)|\le 2
\quad (k\ge 2)
\]
for every prime ideal $\mathfrak p$.
Then for every fixed $\sigma\in(1/2,1)$,
\[
\sum_{\mathfrak d}\frac{|h(\mathfrak d)|}{(\Na\mathfrak d)^\sigma}\ll_\sigma 1,
\]
and consequently
\[
\sum_{\Na\mathfrak d>z}\frac{|h(\mathfrak d)|}{\Na\mathfrak d}
\ll_\sigma z^{-(1-\sigma)}
\qquad(z\ge 1).
\]
\end{lemma}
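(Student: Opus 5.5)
Since $|h|$ is multiplicative and nonnegative, the plan is to bound the series $\sum_{\mathfrak d}|h(\mathfrak d)|(\Na\mathfrak d)^{-\sigma}$ by its Euler product:
\[
\sum_{\mathfrak d}\frac{|h(\mathfrak d)|}{(\Na\mathfrak d)^\sigma}
=\prod_{\mathfrak p}\Bigl(1+\sum_{k\ge 2}\frac{|h(\mathfrak p^k)|}{(\Na\mathfrak p)^{k\sigma}}\Bigr).
\]
This identity holds as an identity of nonnegative (possibly infinite) quantities, by unique factorization of ideals and expanding finite partial products, then letting the cutoff tend to infinity via monotone convergence. The term $k=1$ is absent because $h(\mathfrak p)=0$.

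Using $|h(\mathfrak p^k)|\le 2$, each local factor is at most
\[
1+2\sum_{k\ge 2}(\Na\mathfrak p)^{-k\sigma}=1+\frac{2(\Na\mathfrak p)^{-2\sigma}}{1-(\Na\mathfrak p)^{-\sigma}}\le 1+C_\sigma(\Na\mathfrak p)^{-2\sigma},
\]
since $\Na\mathfrak p\ge 2$ gives $1-(\Na\mathfrak p)^{-\sigma}\ge 1-2^{-1/2}$. Using $1+u\le e^u$, the product is then at most $\exp\bigl(C_\sigma\sum_{\mathfrak p}(\Na\mathfrak p)^{-2\sigma}\bigr)$.

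The convergence of $\sum_{\mathfrak p}(\Na\mathfrak p)^{-2\sigma}$ for $2\sigma>1$ is the only place some arithmetic input is needed. By the splitting law, at most two prime ideals have norm equal to a given rational prime $p$, and inert primes have norm $p^2$. Hence the sum is bounded by $2\sum_p p^{-2\sigma}+\sum_p p^{-4\sigma}<\infty$, since $2\sigma>1$. This proves the first claim, with a constant depending only on $\sigma$.

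For the tail, use Rankin's trick. For $\Na\mathfrak d>z\ge 1$ we have
\[
\frac{1}{\Na\mathfrak d}=\frac{(\Na\mathfrak d)^{\sigma-1}}{(\Na\mathfrak d)^{\sigma}}\le \frac{z^{\sigma-1}}{(\Na\mathfrak d)^{\sigma}},
\]
because $\sigma-1<0$. Summing over $\Na\mathfrak d>z$ and applying the first bound gives
\[
\sum_{\Na\mathfrak d>z}\frac{|h(\mathfrak d)|}{\Na\mathfrak d}\le z^{-(1-\sigma)}\sum_{\mathfrak d}\frac{|h(\mathfrak d)|}{(\Na\mathfrak d)^\sigma}\ll_\sigma z^{-(1-\sigma)}.
\]

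None of these steps is a real obstacle. The only point requiring care is justifying the Euler product identity for a series with nonnegative terms that is not yet known to converge, and this is handled by monotone convergence.
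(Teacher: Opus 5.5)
Your proposal is correct and follows the paper's proof essentially step for step. Both arguments bound the Euler product of $|h|$ using $h(\mathfrak p)=0$ and $|h(\mathfrak p^k)|\le 2$, control its logarithm by the convergent sum $\sum_{\mathfrak p}(\Na\mathfrak p)^{-2\sigma}$, and then obtain the tail bound from $(\Na\mathfrak d)^{-1}\le z^{-(1-\sigma)}(\Na\mathfrak d)^{-\sigma}$. Your added details (monotone convergence for the Euler product identity, and the splitting-law bound for the prime-ideal sum) are correct refinements of steps the paper leaves implicit.
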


\begin{proof}
By multiplicativity,
\[
\sum_{\mathfrak d}\frac{|h(\mathfrak d)|}{(\Na\mathfrak d)^\sigma}
=
\prod_{\mathfrak p}
\left(1+\sum_{k\ge 2}\frac{|h(\mathfrak p^k)|}{(\Na\mathfrak p)^{k\sigma}}\right)
\le
\prod_{\mathfrak p}
\left(1+2\sum_{k\ge 2}\frac{1}{(\Na\mathfrak p)^{k\sigma}}\right).
\]
Since $\sigma>1/2$, the logarithm of the Euler product is
\[
\ll \sum_{\mathfrak p}\frac{1}{(\Na\mathfrak p)^{2\sigma}}<\infty,
\]
so the first claim follows.

For the tail, if $\Na\mathfrak d>z$ then
\[
\frac1{\Na\mathfrak d}
=
\frac1{(\Na\mathfrak d)^\sigma}(\Na\mathfrak d)^{-(1-\sigma)}
\le
z^{-(1-\sigma)}\frac1{(\Na\mathfrak d)^\sigma}.
\]
Summing over $\Na\mathfrak d>z$ proves the second claim.
\end{proof}

\subsection{Deduction of the $1$-bounded theorem}

\begin{proof}[Proof of Theorem~\ref{thm:halasz_1bounded_mult}]
We first treat the completely multiplicative case. If $f$ is completely multiplicative, then
\[
F(s)=\prod_{\mathfrak p}\left(1-\frac{f(\mathfrak p)}{(\Na\mathfrak p)^s}\right)^{-1}
\qquad(\Re(s)>1),
\]
and taking logarithmic derivatives shows that for prime powers $\mathfrak p^k$,
$\Lambda_f(\mathfrak p^k)=f(\mathfrak p)^k\log(\Na\mathfrak p)$.
Since $|f(\mathfrak p)|\le 1$, we have $|\Lambda_f|\le \Lambda$, so Corollary~\ref{cor:pret_cor2} with $\kappa=1$ gives
Theorem~\ref{thm:halasz_1bounded_mult} in this case.

For general multiplicative $f$, define $g$ completely multiplicative by $g(\mathfrak p)=f(\mathfrak p)$ on prime ideals,
and define $h$ multiplicatively by $h(\Z[i])=1$, $h(\mathfrak p)=0$, and
$h(\mathfrak p^k):=f(\mathfrak p^k)-f(\mathfrak p)f(\mathfrak p^{k-1})$ for $k\ge 2$.
Then $|h(\mathfrak p^k)|\le 2$, and $h(\mathfrak d)\neq 0$ only if every prime ideal dividing $\mathfrak d$ occurs
with exponent at least $2$. By a direct computation on prime power ideals we can show  that $f=g*h$ (this is the ideal analogue of
\cite[Ex.~2.3.4]{GS}). Consequently
\[
S_f(x)=\sum_{\Na\mathfrak d\le x} h(\mathfrak d)\,S_g(x/\Na\mathfrak d),
\qquad S_g(y):=\sum_{\Na\mathfrak a\le y} g(\mathfrak a).
\]

Applying the completely multiplicative case to $g$ at level $y$ gives
\[
S_g(y)\ll (1+M_{\mathrm{pret}}(y))e^{-M_{\mathrm{pret}}(y)}\,y+\frac{y}{\log y}\log\log y.
\]
Since $f$ and $g$ agree on prime ideals, they have the same $M_{\mathrm{pret}}(\cdot)$. For $y\ge x^{1/2}$,
Lemma~\ref{lem:Mertens} implies $M_{\mathrm{pret}}(y)\ge M_{\mathrm{pret}}(x)-O(1)$, hence
$(1+M_{\mathrm{pret}}(y))e^{-M_{\mathrm{pret}}(y)}\ll (1+M_{\mathrm{pret}}(x))e^{-M_{\mathrm{pret}}(x)}$ and
$\log\log y/\log y\ll \log\log x/\log x$.

Split the convolution sum into $\Na\mathfrak d\le x^{1/2}$ and $\Na\mathfrak d> x^{1/2}$. In the first range,
$y=x/\Na\mathfrak d\ge x^{1/2}$, so
\[
|S_g(x/\Na\mathfrak d)|\ll (1+M_{\mathrm{pret}}(x))e^{-M_{\mathrm{pret}}(x)}\frac{x}{\Na\mathfrak d}
+\frac{x}{\Na\mathfrak d}\cdot\frac{\log\log x}{\log x}.
\]
Moreover
\[
\sum_{\mathfrak d}\frac{|h(\mathfrak d)|}{\Na\mathfrak d}
=\prod_{\mathfrak p}\left(1+\sum_{k\ge 2}\frac{|h(\mathfrak p^k)|}{(\Na\mathfrak p)^k}\right)
\le \prod_{\mathfrak p}\left(1+2\sum_{k\ge 2}\frac1{(\Na\mathfrak p)^k}\right)\ll 1,
\]
so the contribution of $\Na\mathfrak d\le x^{1/2}$ is bounded by the right-hand side of
Theorem~\ref{thm:halasz_1bounded_mult}.

In the second range we use
\[
|S_g(Y)|\le \sum_{\Na\mathfrak a\le Y}1\ll Y\qquad(Y\ge1),
\]
which follows by counting canonical generators in the quarter disk $|z|^2\le Y$. Thus
$|S_g(x/\Na\mathfrak d)|\ll x/\Na\mathfrak d$. Applying
Lemma~\ref{lem:h_tail} with, say, $\sigma=3/4$, we obtain
\[
\sum_{\Na\mathfrak d>x^{1/2}} |h(\mathfrak d)|\,|S_g(x/\Na\mathfrak d)|
\ll
x\sum_{\Na\mathfrak d>x^{1/2}}\frac{|h(\mathfrak d)|}{\Na\mathfrak d}
\ll x\cdot x^{-1/8}
=
x^{7/8}.
\]
Since
\[
x^{7/8}\ll \frac{x}{\log x}\log\log x
\]
for all sufficiently large $x$, this tail is admissible. Enlarging the implied constant
covers the remaining bounded range of $x$. Combining the two ranges completes the proof.
\end{proof}

\section{Sectorial Hal\'asz theorem}\label{sec:sectorial_halasz}

In this section we prove the sectorial Hal\'asz theorem. The key input is a truncated
Fourier expansion for the indicator of a sector, whose coefficients contribute a factor
of $\log T$. We then apply Theorem~\ref{thm:halasz_1bounded_mult} to the multiplicative functions
$f\lambda_m$ for finitely many nonzero integers $m$.

\subsection{Canonical generators and angular characters}

Let
\[
\mathcal G:=\{z\in\Z[i]\setminus\{0\}: 0\le \arg(z)<\tfrac{\pi}{2}\}.
\]

We continue to use the angular notation from Definition~\ref{def:angular_notation}. In particular,
every nonzero ideal $\mathfrak a\subset\Z[i]$ has a unique generator
$z_{\mathfrak a}\in\mathcal G$,

\[
\arg(\mathfrak a)=\arg z_{\mathfrak a}\in[0,\tfrac{\pi}{2}),
\qquad
\lambda_m(\mathfrak a)=e^{4im\arg(\mathfrak a)}.
\]
Since $\arg(\mathfrak a\mathfrak b)\equiv \arg(\mathfrak a)+\arg(\mathfrak b)\pmod{\pi/2}$,
each $\lambda_m$ is completely multiplicative on ideals.

\subsection{Fourier approximation of sector indicators}

Write $e(t):=\exp(2\pi i t)$, and for $\theta\in\R$ define
\[
\|\theta\|_{\pi/2}:=\min_{k\in\Z}\left|\theta-k\tfrac{\pi}{2}\right|.
\]

\begin{lemma}[Truncated Fourier expansion of a sector indicator]\label{lem:fourier_angle_fixed}
Let $J=[\theta_1,\theta_2)\subset [0,\pi/2)$ and set $\delta_J=|J|/(\pi/2)$.
For each integer $T\ge 1$ there exist coefficients $b_m(J)$ for $0<|m|\le T$ such that
for all $\theta\in[0,\pi/2)$ with $\theta\not\equiv \theta_1,\theta_2\pmod{\pi/2}$,
\[
\mathbf 1_J(\theta)
=
\delta_J+\sum_{0<|m|\le T} b_m(J)e^{4im\theta}+R_T(\theta),
\]
with
\[
|b_m(J)|\ll \frac1{|m|}\qquad(1\le |m|\le T),
\qquad
\sum_{1\le |m|\le T}|b_m(J)|\ll \log(T+1),
\]
and
\[
|R_T(\theta)|
\ll
\min\!\Bigl(
1,\,
\frac{1}{T\|\theta-\theta_1\|_{\pi/2}}
+
\frac{1}{T\|\theta-\theta_2\|_{\pi/2}}
\Bigr).
\]
\end{lemma}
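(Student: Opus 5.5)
We rescale to the unit circle and use the truncated Fourier series of the indicator of an interval. Put $u=\theta/(\pi/2)\in[0,1)$, so $e^{4im\theta}=e(mu)$. The set $J$ becomes the interval $I=[u_1,u_2)$ with $u_j=\theta_j/(\pi/2)$ and $|I|=\delta_J$. The function $\mathbf 1_I$ is periodic of period $1$ and has Fourier coefficients
\[
\widehat{\mathbf 1_I}(0)=\delta_J,\qquad
\widehat{\mathbf 1_I}(m)=\frac{e(-mu_1)-e(-mu_2)}{2\pi i m}\quad(m\ne0).
\]
We take $b_m(J):=\widehat{\mathbf 1_I}(m)$ for $0<|m|\le T$ and define $R_T$ as the difference. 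Then $|b_m(J)|\le 1/(\pi|m|)$ is immediate, and summing the harmonic series gives $\sum_{1\le|m|\le T}|b_m(J)|\ll\log(T+1)$.

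For the remainder, note that $R_T(\theta)$ is the tail $\mathbf 1_I(u)-S_T\mathbf 1_I(u)$, where $S_T$ denotes the symmetric partial Fourier sum. We write $\mathbf 1_I$ as a difference of two shifted sawtooth functions. With $\psi(u)=\{u\}-\tfrac12$ one has
\[
\mathbf 1_I(u)=\delta_J+\psi(u-u_2)-\psi(u-u_1)
\]
at all non-jump points; this is checked separately on $u\in I$ and $u\notin I$. Since
\[
\psi(u)=-\sum_{m\ne0}\frac{e(mu)}{2\pi i m},
\]
the tail $R_T$ is the difference of the two sawtooth tails $\sum_{|m|>T}e(m(u-u_j))/(2\pi i m)$, taken symmetrically. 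So it suffices to prove
\[
\Bigl|\sum_{|m|>T}\frac{e(mv)}{m}\Bigr|\ll\min\Bigl(1,\frac{1}{T\|v\|}\Bigr),
\]
where $\|v\|$ is the distance from $v$ to $\Z$. Because $\|u-u_j\|=\|\theta-\theta_j\|_{\pi/2}/(\pi/2)$, this gives the claimed bound with the constants absorbed.

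The main step is this sawtooth tail estimate, which we prove by combining $m$ and $-m$. The sum equals $2i\sum_{m>T}\sin(2\pi mv)/m$. Partial summation with the Dirichlet-kernel type bound
\[
\Bigl|\sum_{T<m\le N}\sin(2\pi mv)\Bigr|\ll\frac{1}{\|v\|}
\]
gives the bound $1/(T\|v\|)$. For the uniform $O(1)$ bound, the classical fact is that $\sum_{m\le N}\sin(2\pi mv)/m$ is bounded uniformly in $N$ and $v$. This follows by splitting the sum at $m\approx 1/\|v\|$: below that point use $|\sin x|\le|x|$, and above it use the same partial-summation bound. The tail is a difference of two such partial sums (the full series equals $\pi(\tfrac12-\{v\})$ at non-integer $v$, which is also bounded), so it too is $O(1)$. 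The hypothesis $\theta\not\equiv\theta_1,\theta_2$ is needed only so that the pointwise identity for $\psi$ holds, since the Fourier series converges to the midpoint value at the jumps.
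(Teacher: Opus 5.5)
Your proposal is correct and follows the same route as the paper: rescale via $u=2\theta/\pi$ to the unit circle and take the sharp-cutoff truncated Fourier series of an interval indicator. The paper simply cites the standard coefficient and remainder bounds for that series, whereas you derive them yourself from the sawtooth identity $\mathbf 1_I(u)=\delta_J+\psi(u-u_2)-\psi(u-u_1)$ together with partial summation, which is the standard argument behind the cited fact.
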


\begin{proof}
Apply the standard truncated Fourier expansion for the indicator of an interval on the unit circle to
\[
I:=\left[\frac{2\theta_1}{\pi},\frac{2\theta_2}{\pi}\right)\subset[0,1).
\]
If $x=2\theta/\pi$, then $e(mx)=e^{4im\theta}$, so rescaling the circle variable from $x$ back to
$\theta$ gives the stated expansion on $[0,\pi/2)$.
The coefficient bound $|b_m(J)|\ll 1/|m|$ is the standard bound for the Fourier coefficients of an interval,
and the remainder term is the standard bound for the truncated Fourier series of an interval indicator;
see, for instance, \cite[Ch.~I, \S2.6]{IK}.
\end{proof}

\subsection{Fourier decomposition of sectorial sums}

\begin{lemma}[Counting in angular wedges in a radial window]\label{lem:wedge_count_window}
Let $0\le \theta<\pi/2$, let $0<\Delta\le 1$, and let $0\le X<Y$.
Then
\[
\#\{z\in\mathcal G: X<\Na(z)\le Y,\ \|\arg(z)-\theta\|_{\pi/2}\le \Delta\}
\ll \Delta (Y-X)+Y^{1/2}.
\]
\end{lemma}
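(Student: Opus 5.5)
We count lattice points $z=a+bi\in\mathcal G$ in the region
\[
\mathcal W:=\{re^{i\phi}:\ \sqrt X<r\le \sqrt Y,\ \|\phi-\theta\|_{\pi/2}\le \Delta\},
\]
intersected with the quarter plane $0\le \arg z<\pi/2$. Since the condition $\|\phi-\theta\|_{\pi/2}\le\Delta$ with $\Delta\le 1<\pi/2$ describes at most two angular arcs inside $[0,\pi/2)$ (one near $\theta$, possibly wrapping around to the other end of the quadrant), it suffices to bound the count for a single annular sector $\{re^{i\phi}: \sqrt X<r\le\sqrt Y,\ \phi\in I\}$ with $I$ an interval of length $\le 2\Delta$. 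Each such piece is handled separately and the results are added.

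For a single sector we use the standard unit-square comparison. Attach to each lattice point $z$ in the sector the closed unit square $z+[0,1]^2$ (or centred at $z$). These squares have disjoint interiors, and each lies within distance $\sqrt2$ of the sector. So the count is at most the area of the $\sqrt2$-neighbourhood of the sector.

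That area is bounded by the area of the sector plus a constant times its perimeter plus a constant (Steiner-type bound for a convex-ish region, or directly by covering the boundary). The pieces are as follows.
\begin{itemize}
\item The sector area is $\tfrac12|I|(Y-X)\le \Delta(Y-X)$.
\item The outer arc has length $\le 2\Delta\sqrt Y\le 2\sqrt Y$, and the inner arc is shorter.
\item The two radial segments each have length $\sqrt Y-\sqrt X\le \sqrt Y$.
\end{itemize}
The neighbourhood of a rectifiable curve of length $\ell$ has area $\ll \ell+1$, so the boundary contributes $\ll \sqrt Y+1$. Hence the count is $\ll \Delta(Y-X)+Y^{1/2}+1$.

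The additive constant needs care. If $Y\ge 1$ it is absorbed into $Y^{1/2}$. If $Y<1$, there is no $z\ne 0$ with $\Na z\le Y$, so the count is zero.

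The only mildly delicate point is that the sector need not be convex: the annulus piece is not convex, and the neighbourhood-area bound must therefore be justified via the boundary. We do this with the inclusion
\[
\text{neighbourhood}\subset \text{sector}\cup\{w:\operatorname{dist}(w,\partial\,\text{sector})\le\sqrt2\}.
\]
The second set is covered by $O(\ell+1)$ discs of radius $2\sqrt2$ placed along the boundary at unit spacing. This avoids any convexity assumption. The wrap-around case of the norm $\|\cdot\|_{\pi/2}$ is purely bookkeeping and only doubles the constant.
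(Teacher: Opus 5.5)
Your proof is correct and follows essentially the same route as the paper: you attach unit squares to the lattice points and bound the area of an $O(1)$-neighbourhood of the annular sector by $\Delta(Y-X)+O(\sqrt Y+1)$. Your way of bounding that neighbourhood (sector area plus a boundary tube covered by $O(\ell+1)$ discs), together with your explicit treatment of the wrap-around and of $Y<1$, is more careful than the paper's step. The paper asserts that the neighbourhood lies in an annular sector of opening angle $\ll\Delta+Y^{-1/2}$, which is not literally true near the origin when $X$ is small compared with $Y$, although the final bound is unaffected.
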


\begin{proof}
Let $\mathcal R$ be the annular sector
\[
\mathcal R:=\left\{re^{i\phi}: \sqrt X<r\le \sqrt Y,\ \|\phi-\theta\|_{\pi/2}\le \Delta\right\}.
\]
The set being counted is contained in $\mathcal R\cap\mathcal G\cap\Z[i]$.
For each lattice point $z\in \mathcal R\cap\mathcal G\cap\Z[i]$, attach the unit square
\[
Q_z:=z+[0,1]^2.
\]
These squares have disjoint interiors, so the number of such lattice points is at most the area of
$\bigcup_z Q_z$.
Moreover, $\bigcup_z Q_z$ is contained in the $O(1)$-neighbourhood of $\mathcal R$, hence in an annular
sector with radii between $\max\{\sqrt X-C,0\}$ and $\sqrt Y+C$ and opening angle $\ll \Delta+Y^{-1/2}$,
for some absolute constant $C$.
Therefore
\[
\#(\mathcal R\cap\mathcal G\cap\Z[i])
\ll
(\Delta+Y^{-1/2})\Bigl((\sqrt Y+C)^2-(\max\{\sqrt X-C,0\})^2\Bigr).
\]
Since
\[
(\sqrt Y+C)^2-(\max\{\sqrt X-C,0\})^2 \ll (Y-X)+Y^{1/2},
\]
we obtain
\[
\#(\mathcal R\cap\mathcal G\cap\Z[i])\ll \Delta(Y-X)+Y^{1/2}.
\]
This proves the claim.
\end{proof}

\begin{lemma}[Summing the Fourier remainder in a radial window]\label{lem:sum_remainder_window}
Let $J=[\theta_1,\theta_2)\subset[0,\pi/2)$ and let $T\ge 2$.
Then for all $0\le X<Y$,
\[
\sum_{X<\Na\mathfrak a\le Y}\bigl|R_T(\arg(\mathfrak a))\bigr|
\ll \frac{(Y-X)\log(T+1)}{T}+Y^{1/2},
\]
where $R_T$ is the remainder from Lemma~\ref{lem:fourier_angle_fixed}.
\end{lemma}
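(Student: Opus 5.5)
We bound the remainder pointwise using the estimate of Lemma~\ref{lem:fourier_angle_fixed}. It suffices to handle one endpoint, say $\theta_1$, since the $\theta_2$ term is identical. We partition the ideals in the window according to the distance $d(\mathfrak a):=\|\arg(\mathfrak a)-\theta_1\|_{\pi/2}$, using dyadic shells. Each ideal corresponds bijectively to its canonical generator $z_{\mathfrak a}\in\mathcal G$, so every count below is a count of lattice points in $\mathcal G$ and Lemma~\ref{lem:wedge_count_window} applies directly.

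First we treat the ideals with $d(\mathfrak a)\le 1/T$, for which we use the bound $|R_T|\ll 1$. Lemma~\ref{lem:wedge_count_window} with $\Delta=1/T$ shows that their number is $\ll (Y-X)/T+Y^{1/2}$. Next, for each $j\ge 0$ with $2^j/T\le \pi/4$, we consider the shell $2^j/T<d(\mathfrak a)\le 2^{j+1}/T$. There we use $|R_T|\ll 1/(T d(\mathfrak a))\le 2^{-j}$. Lemma~\ref{lem:wedge_count_window} with $\Delta=\min(2^{j+1}/T,1)$ bounds the number of ideals in the shell by $\ll 2^{j+1}(Y-X)/T+Y^{1/2}$. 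Hence the shell contributes
\[
\ll 2^{-j}\Bigl(\frac{2^{j+1}(Y-X)}{T}+Y^{1/2}\Bigr)\ll \frac{Y-X}{T}+2^{-j}Y^{1/2}.
\]
Since $d(\mathfrak a)\le\pi/4$ always, these shells cover every remaining ideal, and there are $\ll\log(T+1)$ of them.

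Summing over $j$, the first terms give $\ll (Y-X)\log(T+1)/T$. The second terms form a geometric series and give $\ll Y^{1/2}$. The constant in front of $Y^{1/2}$ is therefore uniform in $T$, with no $\log T$ loss. Adding the analogous bound for $\theta_2$ gives the claim.

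The only point needing care is that the $Y^{1/2}$ error from each lattice count must not accumulate a $\log T$ factor. The weighting $2^{-j}$ built into the dyadic decomposition makes that sum geometric, so this is exactly what the shell structure handles.

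The lemma as stated is valid only for $\theta\not\equiv\theta_1,\theta_2$, so the few ideals with argument exactly $\theta_1$ or $\theta_2$ need separate treatment. Either we extend $R_T$ at those points by the bound $|R_T|\ll 1$, or we simply note they lie in the innermost wedge, which has already been counted with weight $O(1)$.
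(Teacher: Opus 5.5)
Your proof is correct and follows essentially the same route as the paper: a dyadic decomposition in $\|\arg(\mathfrak a)-\theta_1\|_{\pi/2}$ around each endpoint, Lemma~\ref{lem:wedge_count_window} applied to each shell, and a geometric series that keeps the $Y^{1/2}$ terms free of any $\log T$ loss. The only differences are cosmetic: you index the shells outward from $1/T$ rather than inward from $1$. You also handle explicitly the ideals whose argument equals $\theta_1$ or $\theta_2$, which the paper leaves implicit; there $|R_T|\ll 1$ follows from the stated bound and the continuity of the trigonometric polynomial.
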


\begin{proof}
By Lemma~\ref{lem:fourier_angle_fixed},
\[
|R_T(\theta)|
\ll
\min\!\Bigl(
1,\,
\frac{1}{T\|\theta-\theta_1\|_{\pi/2}}
+
\frac{1}{T\|\theta-\theta_2\|_{\pi/2}}
\Bigr).
\]
It suffices to bound the contribution from one endpoint, say $\theta_1$.
For $0<\Delta\le 1$, write
\[
A(\Delta):=\#\{X<\Na\mathfrak a\le Y:\ \|\arg(\mathfrak a)-\theta_1\|_{\pi/2}\le \Delta\}.
\]
Let $\Delta_0:=1/T$. Then
\begin{align*}
&\sum_{X<\Na\mathfrak a\le Y}
\min\!\Bigl(1,\frac{1}{T\|\arg(\mathfrak a)-\theta_1\|_{\pi/2}}\Bigr) \\
&\quad\ll A(\Delta_0)
+ \sum_{\substack{j\ge 0\\ 2^{-j}>\Delta_0}}
\frac{2^j}{T}\,
\#\Bigl\{X<\Na\mathfrak a\le Y:
2^{-j-1}<\|\arg(\mathfrak a)-\theta_1\|_{\pi/2}\le 2^{-j}\Bigr\} \\
&\quad\ll A(\Delta_0)+\sum_{\substack{j\ge 0\\ 2^{-j}>\Delta_0}}\frac{2^j}{T}\,A(2^{-j}).
\end{align*}
Applying Lemma~\ref{lem:wedge_count_window} with $\Delta=\Delta_0$ and $\Delta=2^{-j}$ gives
\[
A(\Delta_0)\ll \frac{Y-X}{T}+Y^{1/2},
\qquad
A(2^{-j})\ll 2^{-j}(Y-X)+Y^{1/2}.
\]
Therefore
\[
\begin{aligned}
&\sum_{X<\Na\mathfrak a\le Y}
\min\!\Bigl(1,\frac{1}{T\|\arg(\mathfrak a)-\theta_1\|_{\pi/2}}\Bigr) \\
&\quad\ll
\frac{Y-X}{T}+Y^{1/2}
+
\sum_{2^{-j}>\Delta_0}\frac{2^j}{T}\bigl(2^{-j}(Y-X)+Y^{1/2}\bigr),
\end{aligned}
\]
and the sum on the right is
\[
\ll \frac{(Y-X)\log(T+1)}{T}
+Y^{1/2}\sum_{2^{-j}>\Delta_0}\frac{2^j}{T}
+Y^{1/2}.
\]
Since $\Delta_0=1/T$, we have
\[
\sum_{2^{-j}>\Delta_0}\frac{2^j}{T}\ll 1.
\]
It follows that
\[
\sum_{X<\Na\mathfrak a\le Y}
\min\!\left(1,\frac{1}{T\|\arg(\mathfrak a)-\theta_1\|_{\pi/2}}\right)
\ll \frac{(Y-X)\log(T+1)}{T}+Y^{1/2}.
\]
The same estimate holds for the endpoint $\theta_2$, proving the claim.
\end{proof}

\begin{proposition}[Fourier sector decomposition in a radial window]\label{prop:sector_decomp_window}
Let $f:\mathcal I\to\C$ be multiplicative with $|f(\mathfrak a)|\le 1$.
Let $J=[\theta_1,\theta_2)\subset[0,\pi/2)$ and let $T\ge 2$.
Then for all $0\le X<Y$,
\[
\begin{aligned}
&\sum_{\substack{X<\Na\mathfrak a\le Y\\ \arg(\mathfrak a)\in J}} f(\mathfrak a)
-\delta_J\sum_{X<\Na\mathfrak a\le Y} f(\mathfrak a) \\
&\quad=
\sum_{0<|m|\le T} b_m(J)
\sum_{X<\Na\mathfrak a\le Y} f(\mathfrak a)\lambda_m(\mathfrak a)
+O\!\left(\frac{(Y-X)\log(T+1)}{T}+Y^{1/2}\right),
\end{aligned}
\]
with an absolute implied constant.
\end{proposition}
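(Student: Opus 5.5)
We will insert the truncated Fourier expansion of Lemma~\ref{lem:fourier_angle_fixed} pointwise into the sectorial sum. The sum of its remainders will then be controlled using Lemma~\ref{lem:sum_remainder_window}, and the exceptional ideals whose argument lies exactly on a boundary ray will be counted using Lemma~\ref{lem:wedge_count_window}.

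First, for every ideal $\mathfrak a$ with $X<\Na\mathfrak a\le Y$ and $\arg(\mathfrak a)\not\equiv\theta_1,\theta_2\pmod{\pi/2}$, Lemma~\ref{lem:fourier_angle_fixed} applied with $\theta=\arg(\mathfrak a)$ gives
\[
\mathbf 1_J(\arg(\mathfrak a))=\delta_J+\sum_{0<|m|\le T}b_m(J)\lambda_m(\mathfrak a)+R_T(\arg(\mathfrak a)),
\]
because $e^{4im\arg(\mathfrak a)}=\lambda_m(\mathfrak a)$ by Definition~\ref{def:angular_notation}. We multiply by $f(\mathfrak a)$ and sum over these ideals. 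Since the sums are finite, we may interchange the $m$-sum with the $\mathfrak a$-sum. Using $|f|\le1$, the remainder contributes at most $\sum_{X<\Na\mathfrak a\le Y}|R_T(\arg(\mathfrak a))|$. By Lemma~\ref{lem:sum_remainder_window} this is $\ll (Y-X)\log(T+1)/T+Y^{1/2}$.

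Second, we handle the ideals excluded in the first step, namely those with $\arg(\mathfrak a)\in\{\theta_1,\theta_2\}$ modulo $\pi/2$. For such an ideal the identity above may fail. Both sides of the identity are bounded, however: the left side by $1+\delta_J$, and the right side by $O(\log(T+1))$ from $\sum_{m}|b_m(J)|\ll\log(T+1)$, with $R_T$ bounded as well. Restoring these ideals to all three sums (the sectorial sum, $\delta_J$ times the full sum, and the Fourier sums) therefore costs $O(\log(T+1))$ times their number. The number of such ideals is at most the number of lattice points in $\mathcal G$ on two rays with $X<\Na z\le Y$. Lemma~\ref{lem:wedge_count_window}, applied with $\Delta\to0$ (or with $\Delta=Y^{-1}$), bounds this count by $\ll Y^{1/2}$.

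This is the main obstacle: the trivial bound produces an error $Y^{1/2}\log(T+1)$, not the claimed $Y^{1/2}$. To avoid the extra logarithm, we would use the standard Fourier expansion, which already converges to the midpoint value $\tfrac12$ at the jumps. Concretely, we would note that the remainder bound $|R_T|\ll1$ extends to $\theta\equiv\theta_j$ once $R_T$ is defined there by the same formula. On the ray $\arg z=\theta_j$ the truncated symmetric partial sum is bounded uniformly in $T$, because the partial sums of the sawtooth Fourier series are uniformly bounded. Each ray point then contributes $O(1)$, and the total boundary error is $O(Y^{1/2})$.

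Combining the two steps gives the stated identity with error $O\bigl((Y-X)\log(T+1)/T+Y^{1/2}\bigr)$, and all implied constants are absolute.
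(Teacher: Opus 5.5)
Your proposal is correct and follows the paper's proof. In both, you expand $\mathbf 1_J(\arg\mathfrak a)$ by Lemma~\ref{lem:fourier_angle_fixed} away from the two boundary rays, bound the remainders by Lemma~\ref{lem:sum_remainder_window}, and then restore the $O(Y^{1/2})$ ideals lying on the rays. The paper simply asserts that this restoration costs $O(Y^{1/2})$; your point that this needs $\bigl|\sum_{0<|m|\le T}b_m(J)e^{4im\theta}\bigr|\ll 1$ at the jump points is exactly the missing justification. That bound also follows directly from the lemma as stated: the trigonometric polynomial is continuous and is $O(1)$ off the endpoints because $|R_T|\ll 1$ there, so no appeal to sawtooth partial sums is needed.
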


\begin{proof}
Let $\mathcal E=\mathcal E(X,Y;J)$ be the set of ideals $\mathfrak a$ with
\[
X<\Na\mathfrak a\le Y
\qquad\text{and}\qquad
\arg(\mathfrak a)\in\{\theta_1,\theta_2\}.
\]
Each boundary condition places the canonical generator on a fixed ray, so
\[
\#\mathcal E\ll Y^{1/2}.
\]
For $\mathfrak a\notin\mathcal E$, Lemma~\ref{lem:fourier_angle_fixed} gives
\[
\mathbf 1_J(\arg(\mathfrak a))
=
\delta_J+\sum_{0<|m|\le T} b_m(J)\lambda_m(\mathfrak a)+R_T(\arg(\mathfrak a)).
\]
Multiply by $f(\mathfrak a)$ and sum over $X<\Na\mathfrak a\le Y$, $\mathfrak a\notin\mathcal E$.
Restoring the omitted boundary terms contributes $O(Y^{1/2})$, and
Lemma~\ref{lem:sum_remainder_window} bounds the sum of the remainder terms by
\[
O\!\left(\frac{(Y-X)\log(T+1)}{T}+Y^{1/2}\right).
\]
This proves the stated formula.
\end{proof}

\begin{theorem}[Quantitative sectorial Hal\'asz theorem]\label{thm:sectorial_halasz_1bdd_fixed}
Let $f:\mathcal I\to\C$ be multiplicative with $|f(\mathfrak a)|\le 1$ for all ideals $\mathfrak a$.
Let $J=[\theta_1,\theta_2)\subset [0,\pi/2)$ (possibly depending on $x$) and let $T=T(x)\ge 2$ be an integer.
Let $\delta_J=|J|/(\pi/2)$, and let $M_m(x)$ be as in Definition~\ref{def:M_m_1bdd_fixed}.
Then for all $x\ge 3$,
\begin{align*}
\bigl|S_{f,J}(x)-\delta_J S_f(x)\bigr|
\ \ll\ 
&x\log(T+1)\,\max_{1\le |m|\le T}\Bigl((1+M_m(x))e^{-M_m(x)}\Bigr) \\
&\quad+\ \frac{x}{\log x}(\log\log x)\,\log(T+1)\\
&\quad+\ \frac{x\log(T+1)}{T}\\
&\quad+\ x^{1/2},
\end{align*}
with an absolute implied constant (uniform in $J$ and $T$).
\end{theorem}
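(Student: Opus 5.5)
The plan is to apply Proposition~\ref{prop:sector_decomp_window} with $X=0$, $Y=x$ and the given integer $T\ge 2$. This yields
\[
S_{f,J}(x)-\delta_J S_f(x)=\sum_{0<|m|\le T} b_m(J)\,S_{f\lambda_m}(x)+O\!\left(\frac{x\log(T+1)}{T}+x^{1/2}\right),
\]
with an absolute implied constant that is uniform in $J$. The last two terms of the claimed bound are therefore already accounted for. What remains is to control the weighted sum of the twisted summatory functions $S_{f\lambda_m}(x)$.

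For each $m$ with $1\le|m|\le T$, the function $f\lambda_m$ is multiplicative on ideals. This holds because $\lambda_m$ is completely multiplicative (as noted after Definition~\ref{def:angular_notation}) and $f$ is multiplicative, so their pointwise product is multiplicative. It also satisfies $|f\lambda_m(\mathfrak a)|\le 1$, since $|\lambda_m|=1$. Hence Theorem~\ref{thm:halasz_1bounded_mult} applies to $f\lambda_m$, with an absolute implied constant. By Definition~\ref{def:M_m_1bdd_fixed}, its pretentious parameter is
\[
\min_{|t|\le\log x}\mathbb D(f\lambda_m,\Na^{it};x)^2=M_m(x).
\]
Thus
\[
|S_{f\lambda_m}(x)|\ll (1+M_m(x))e^{-M_m(x)}\,x+\frac{x}{\log x}\log\log x,
\]
uniformly in $m$.

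Next, insert this bound into the Fourier sum and use the coefficient bound $\sum_{1\le|m|\le T}|b_m(J)|\ll\log(T+1)$ from Lemma~\ref{lem:fourier_angle_fixed}. Bounding each $(1+M_m)e^{-M_m}$ by its maximum over $1\le|m|\le T$ gives
\[
\Bigl|\sum_{0<|m|\le T}b_m(J)S_{f\lambda_m}(x)\Bigr|\ll x\log(T+1)\max_{1\le|m|\le T}(1+M_m(x))e^{-M_m(x)}+\frac{x\log\log x}{\log x}\log(T+1).
\]
Combining this with the remainder terms completes the estimate.

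The one point that needs care is uniformity. The implied constant in Theorem~\ref{thm:halasz_1bounded_mult} must be absolute, not dependent on $m$, and the constants in Lemma~\ref{lem:fourier_angle_fixed} and Proposition~\ref{prop:sector_decomp_window} must not depend on $J$. Both properties are stated in those results. There is also a subtlety in Lemma~\ref{lem:fourier_angle_fixed} when an endpoint of $J$ is $0$, since $\theta\equiv\theta_1$ then includes the real axis. This is already handled in Proposition~\ref{prop:sector_decomp_window} through the boundary set $\mathcal E$, which contributes $O(x^{1/2})$. Beyond these checks there is no substantive obstacle: the theorem is a direct combination of the Fourier decomposition with the $1$-bounded Hal\'asz theorem applied to each twist.
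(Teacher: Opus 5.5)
Your proposal is correct and is essentially the paper's proof. You apply Proposition~\ref{prop:sector_decomp_window} with $(X,Y)=(0,x)$, bound each $S_{f\lambda_m}(x)$ by Theorem~\ref{thm:halasz_1bounded_mult} applied to the $1$-bounded multiplicative function $f\lambda_m$ (whose pretentious parameter is exactly $M_m(x)$), and then sum using $\sum_{0<|m|\le T}|b_m(J)|\ll\log(T+1)$, with your uniformity remarks matching how the paper handles these points.
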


\begin{proof}
Apply Proposition~\ref{prop:sector_decomp_window} with $(X,Y)=(0,x)$:
\[
S_{f,J}(x)-\delta_J S_f(x)
=
\sum_{0<|m|\le T} b_m(J)\,S_{f\lambda_m}(x)
+O\!\left(\frac{x\log(T+1)}{T}+x^{1/2}\right).
\]
Taking absolute values and using
\[
\sum_{0<|m|\le T}|b_m(J)|\ll \log(T+1)
\]
gives
\[
\bigl|S_{f,J}(x)-\delta_J S_f(x)\bigr|
\ll
\sum_{0<|m|\le T}|b_m(J)|\,|S_{f\lambda_m}(x)|
+\frac{x\log(T+1)}{T}+x^{1/2}.
\]
For each $m$, the function $f\lambda_m$ is multiplicative and $1$-bounded, so
Theorem~\ref{thm:halasz_1bounded_mult} gives
\[
S_{f\lambda_m}(x)
\ll
(1+M_m(x))e^{-M_m(x)}\,x + \frac{x}{\log x}\log\log x.
\]
Insert this estimate, take the maximum over $1\le |m|\le T$, and use
\[
\sum_{0<|m|\le T}|b_m(J)|\ll \log(T+1)
\]
to obtain the stated bound.
\end{proof}

\begin{remark}[Log-power decay under angular non-pretentiousness]
\label{rem:sectorial_logpower}
Under the hypotheses of Theorem~\ref{thm:sectorial_halasz_1bdd_fixed}, let $0<A\le 1$ and assume that
\[
\min_{1\le |m|\le T} M_m(x)\ge 2A\log\log x
\]
for all sufficiently large $x$. Then
\[
\bigl|S_{f,J}(x)-\delta_J S_f(x)\bigr|
\ll_A
\frac{x\log(T+1)\log\log x}{(\log x)^A}
+\frac{x\log(T+1)}{T}
+x^{1/2}.
\]
\end{remark}

\begin{proof}[Proof of Remark~\ref{rem:sectorial_logpower}]
This is immediate from Theorem~\ref{thm:sectorial_halasz_1bdd_fixed}, since for $0<A\le 1$ the
hypothesis gives
\[
(1+M_m(x))e^{-M_m(x)}\ll_A \frac{\log\log x}{(\log x)^{2A}}
\ll_A \frac{\log\log x}{(\log x)^A},
\]
and also
\[
\frac{\log\log x}{\log x}\ll \frac{\log\log x}{(\log x)^A}.
\]
\end{proof}

We now prove Theorem~\ref{thm:sectorial_cancellation_1bdd_fixed}.

\begin{proof}[Proof of Theorem~\ref{thm:sectorial_cancellation_1bdd_fixed}]
Choose an integer-valued function $T=T(x)\ge 2$ such that $T(x)\to\infty$ and
\[
\min_{1\le |m|\le T(x)} M_m(x)\to\infty\qquad (x\to\infty).
\]
For instance, one may define $T(x)$ by a diagonal argument.

Define $T_1(x)$ to be the largest integer $K$ satisfying
\[
2\le K\le \min\bigl(T(x),\lfloor\log x\rfloor\bigr)
\]
and
\[
\log(K+1)
\max_{1\le |m|\le K}\Bigl((1+M_m(x))e^{-M_m(x)}\Bigr)
\le \frac{1}{\log\log x},
\]
with the convention $T_1(x)=2$ if no such integer exists.
For each fixed $K\ge 2$, the hypothesis on $T(x)$ gives
\[
\max_{1\le |m|\le K}\Bigl((1+M_m(x))e^{-M_m(x)}\Bigr)\to 0\qquad (x\to\infty),
\]
so for all sufficiently large $x$ the integer $K$ belongs to the defining set. Since also
$\min\bigl(T(x),\lfloor\log x\rfloor\bigr)\to\infty$, it follows that $T_1(x)\to\infty$ and
\[
\log(T_1(x)+1)\max_{1\le |m|\le T_1(x)}\Bigl((1+M_m(x))e^{-M_m(x)}\Bigr)
\le \frac{1}{\log\log x}=o(1).
\]
Moreover,
\[
\log(T_1(x)+1)\le \log\log x=o\!\Bigl(\frac{\log x}{\log\log x}\Bigr).
\]

Apply Theorem~\ref{thm:sectorial_halasz_1bdd_fixed} with \(T=T_1(x)\). The first term in that
bound is \(o(x)\) by construction, the second is \(o(x)\) because
\[
\log(T_1(x)+1)\le \log\log x=o\!\left(\frac{\log x}{\log\log x}\right),
\]
and the third is \(o(x)\) since \(T_1(x)\to\infty\).
\[
S_{f,J}(x)=\delta_J S_f(x)+o(x).
\]
If in addition $M_0(x)\to\infty$, then Theorem~\ref{thm:halasz_1bounded_mult} applied to $f$
gives $S_f(x)=o(x)$, and therefore $S_{f,J}(x)=o(x)$.
\end{proof}

\section{Sectorial short-interval Hal\'asz theorem}\label{sec:sectorial_MR}

In this section we prove Theorem~\ref{thm:sectorial_MR}. For each
integer \(m\), we compress the ideal function \(f\lambda_m\) by norm to a
multiplicative function \(g_m\) on \(\mathbb N\). We then verify the hypotheses of
\cite[Theorem~1.7]{Mangerel} for \(g_m\). The long average appearing in
Mangerel's theorem is shown to be negligible by applying Theorem~\ref{thm:halasz_1bounded_mult}
to \(f\lambda_m\) with an Archimedean twist. Finally, the estimates for fixed-$m$ modes are
combined with the Fourier decomposition from Section~\ref{sec:sectorial_halasz}.

For each $m\in\Z$, define
\[
h_m(\mathfrak a):=f(\mathfrak a)\lambda_m(\mathfrak a)
\qquad (\mathfrak a\in\mathcal I),
\]
and let
\[
g_m:=h_m^*
\]
be the norm-compression from Definition~\ref{def:norm-compression}. Thus
\[
g_m(n)=\sum_{\Na\mathfrak a=n} f(\mathfrak a)\lambda_m(\mathfrak a)
\qquad (n\ge 1).
\]
By Lemma~\ref{lem:norm-compression}, the function $g_m$ is multiplicative. Also,
\[
\sum_{x<n\le x+h} g_m(n)=S_{f\lambda_m}(x;h)
\]
for all $x\ge 1$ and $h\ge 1$. Since $|f|\le 1$ and $|\lambda_m|=1$,
\[
|g_m(n)|\le \#\{\mathfrak a\in\mathcal I:\Na\mathfrak a=n\}\le d(n),
\]
and in particular $|g_m(p)|\le 2$ for every rational prime $p$.

\begin{lemma}[Mangerel auxiliary Euler factors for $g_m$]\label{lem:gm_euler_factors}
Let $m\in\Z$, and let $g_m$ be defined above. Then uniformly in $m$ and $X\ge 3$,
\[
H(g_m;X):=\prod_{p\le X}\Bigl(1+\frac{(|g_m(p)|-1)^2}{p}\Bigr)\ \ll\ \log X,
\]
and
\[
\prod_{p\le X}\Bigl(1+\frac{|g_m(p)|-1}{p}\Bigr)\ \ll\ 1.
\]
\end{lemma}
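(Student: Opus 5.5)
The plan is to compute $|g_m(p)|$ prime by prime using the splitting law, and then estimate both Euler products by their logarithms via Lemma~\ref{lem:Mertens}. There are three cases.

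For $p=2$ there is exactly one ideal of norm $2$, namely $\mathfrak p_2$. Hence $g_m(2)=f(\mathfrak p_2)\lambda_m(\mathfrak p_2)$ and $|g_m(2)|\le 1$.

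For $p\equiv 3\pmod 4$ no ideal has norm $p$, since the inert prime $(p)$ has norm $p^2$. Hence $g_m(p)=0$ and $|g_m(p)|-1=-1$.

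For $p\equiv 1\pmod 4$ the ideals of norm $p$ are exactly $\mathfrak p$ and $\overline{\mathfrak p}$. Hence
\[
g_m(p)=f(\mathfrak p)\lambda_m(\mathfrak p)+f(\overline{\mathfrak p})\lambda_m(\overline{\mathfrak p}),
\]
and $0\le |g_m(p)|\le 2$, so $(|g_m(p)|-1)^2\le 1$ and $|g_m(p)|-1\le 1$. All of these bounds are independent of $m$, which gives the uniformity.

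For $H(g_m;X)$, every factor satisfies $1+(|g_m(p)|-1)^2/p\le 1+1/p$. Then $\log H(g_m;X)\le\sum_{p\le X}1/p=\log\log X+O(1)$ by the classical Mertens theorem. Alternatively one can use Lemma~\ref{lem:Mertens}, since the number of ideals of norm $p$ is at most $2$. Exponentiating gives $H(g_m;X)\ll \log X$.

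For the second product, the inert primes contribute the factor $1-1/p$. The split primes contribute factors at most $1+1/p$, and the prime $2$ contributes a factor at most $1$. Taking logarithms and using $\log(1+u)\le u$, the logarithm is bounded by
\[
\sum_{\substack{p\le X\\ p\equiv 1\ (4)}}\frac1p-\sum_{\substack{p\le X\\ p\equiv 3\ (4)}}\frac1p + O(1).
\]
This is $O(1)$, because Mertens' theorem in arithmetic progressions (as in the proof of Lemma~\ref{lem:Mertens}) gives $\tfrac12\log\log X+O(1)$ for each residue class. Equivalently, the difference is $\sum_{p\le X}\chi_{-4}(p)/p$, which converges because $L(1,\chi_{-4})\ne0$.

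The only delicate point is the factor at $p=2$ when $|g_m(2)|$ is small, because the factor $1+(|g_m(2)|-1)/2$ is then positive but small. It is still bounded above by $1$, so the upper bound is unaffected and no lower bound is needed. No step is a genuine obstacle. The key observation is that the $-1/p$ losses at inert primes exactly cancel the $+1/p$ gains at split primes.
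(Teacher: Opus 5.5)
Your proof is correct and follows the paper's argument. You use the same case split by the splitting law: the factor is exactly $1-1/p$ at inert primes, at most $1+1/p$ at split primes, and at most $1$ at $p=2$. You then apply Mertens' theorem in the classes $1$ and $3 \pmod 4$, writing it additively as $\sum_{p\le X}\chi_{-4}(p)/p=O(1)$, where the paper multiplies the two products $\asymp(\log X)^{\pm1/2}$.

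Your bound $\log H(g_m;X)\le\log\log X+O(1)$, with leading constant $1$, is exactly what gives $H(g_m;X)\ll\log X$; the paper's ``$\ll\log\log X$'' is looser. Your aside that Lemma~\ref{lem:Mertens} could be used instead does not work as stated, because inert primes also contribute factors $1+1/p$ to $H$ but have no prime ideal of norm $p$.
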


\begin{proof}
Since $|g_m(p)|\le 2$ for primes, we have $(|g_m(p)|-1)^2\le 1$.
Therefore
\[
\log H(g_m;X)\ \ll\ \sum_{p\le X}\frac{1}{p}\ \ll\ \log\log X,
\]
which implies $H(g_m;X)\ll \log X$.

For the second product, first note that if $p\equiv 3\pmod 4$ then there is no prime ideal
of norm $p$ in $\Z[i]$, so $g_m(p)=0$ and the corresponding Euler factor is exactly $1-1/p$.
If $p\equiv 1\pmod 4$, then there are two prime ideals of norm $p$, hence $|g_m(p)|\le 2$ and so
\[
1+\frac{|g_m(p)|-1}{p}\le 1+\frac1p=\frac{1-p^{-2}}{1-p^{-1}}.
\]
Finally, for $p=2$ there is one prime ideal of norm $2$, so the $p=2$ factor is $\le 1$.
Therefore
\[
\prod_{p\le X}\Bigl(1+\frac{|g_m(p)|-1}{p}\Bigr)
\ll
\prod_{\substack{p\le X\\ p\equiv 1\ (4)}}\frac{1-p^{-2}}{1-p^{-1}}
\prod_{\substack{p\le X\\ p\equiv 3\ (4)}}(1-p^{-1}).
\]
The convergent product $\prod_{p\equiv 1\ (4)}(1-p^{-2})$ is bounded, and Mertens' theorem in the two
residue classes modulo $4$ (see \cite{Williams74}) gives
\[
\prod_{\substack{p\le X\\ p\equiv 1\ (4)}}(1-p^{-1})^{-1}\asymp (\log X)^{1/2},
\qquad
\prod_{\substack{p\le X\\ p\equiv 3\ (4)}}(1-p^{-1})\asymp (\log X)^{-1/2}.
\]
Multiplying these two bounds yields the claimed $O(1)$ estimate.
\end{proof}

Recall that $\widetilde M_m(x)$ was defined in Definition~\ref{def:M_m_1bdd_fixed}.
In the proof below, \cite[Theorem~1.7]{Mangerel} supplies a parameter $t_{0,m}\in[-Y,Y]$
where $Y=X+h$, and Lemma~\ref{lem:long_flambda_small} uses the range $|t|\le 2X$.

\subsection{Bounds for the functions $f\lambda_m$}

\begin{lemma}[Long-sum bound for \(f\lambda_m\)]\label{lem:long_flambda_small}
Let $f$ be as above, fix $m\in\Z$, and let $X\ge 6$, $Z>0$, and $t_0\in\R$ satisfy
\[
\frac{X}{2}\le Z\le 2X,
\qquad
|t_0|+\log Z\le 2X.
\]
Then
\[
\sum_{\Na\mathfrak a\le Z} f(\mathfrak a)\lambda_m(\mathfrak a)(\Na\mathfrak a)^{-it_0}
\ \ll\
(1+\widetilde M_m(X))e^{-\widetilde M_m(X)}\,Z\ +\ \frac{Z}{\log Z}\,\log\log Z.
\]
Equivalently,
\[
\sum_{n\le Z} g_m(n)\,n^{-it_0}
\ \ll\
(1+\widetilde M_m(X))e^{-\widetilde M_m(X)}\,Z\ +\ \frac{Z}{\log Z}\,\log\log Z.
\]
\end{lemma}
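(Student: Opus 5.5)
We apply Theorem~\ref{thm:halasz_1bounded_mult} at level $Z$ to the twisted function
\[
F(\mathfrak a):=f(\mathfrak a)\lambda_m(\mathfrak a)(\Na\mathfrak a)^{-it_0}.
\]
This function is multiplicative, since each factor is multiplicative and $\lambda_m$, $\Na^{-it_0}$ are completely multiplicative. It is also $1$-bounded. The theorem then gives
\[
S_F(Z)\ll (1+M_{\mathrm{pret}}^F(Z))e^{-M_{\mathrm{pret}}^F(Z)}Z+\frac{Z}{\log Z}\log\log Z,
\]
where $M^F_{\mathrm{pret}}(Z)=\min_{|t|\le\log Z}\mathbb D(F,\Na^{it};Z)^2$. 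The equivalent form with $g_m(n)n^{-it_0}$ follows immediately from Lemma~\ref{lem:norm-compression}(i): the norm-compression of $F$ is $n\mapsto g_m(n)n^{-it_0}$, because $\Na\mathfrak a=n$ is constant on each fibre.

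It remains to compare $M^F_{\mathrm{pret}}(Z)$ with $\widetilde M_m(X)$.

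\emph{Shifting the twist.} On primes,
\[
F(\mathfrak p)(\Na\mathfrak p)^{-it}=f(\mathfrak p)\lambda_m(\mathfrak p)(\Na\mathfrak p)^{-i(t+t_0)},
\]
so $\mathbb D(F,\Na^{it};Z)^2=\mathbb D_m(f;t+t_0;Z)^2$. For $|t|\le\log Z$ we have $|t+t_0|\le |t_0|+\log Z\le 2X$, which is exactly where the hypothesis $|t_0|+\log Z\le 2X$ enters. Hence
\[
M^F_{\mathrm{pret}}(Z)\ge \min_{|u|\le 2X}\mathbb D_m(f;u;Z)^2.
\]

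\emph{Changing the level.} We need to pass from level $Z$ to level $X$. Since $X/2\le Z\le 2X$, the two distances differ only by primes with norm between $X/2$ and $2X$. Each summand of $\mathbb D_m$ is nonnegative and at most $2/\Na\mathfrak p$, so by Lemma~\ref{lem:Mertens},
\[
\bigl|\mathbb D_m(f;u;Z)^2-\mathbb D_m(f;u;X)^2\bigr|\le 2\sum_{X/2<\Na\mathfrak p\le 2X}\frac1{\Na\mathfrak p}=O(1)
\]
uniformly in $u$. Therefore $M^F_{\mathrm{pret}}(Z)\ge \widetilde M_m(X)-O(1)$.

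\emph{Conclusion.} The function $u\mapsto(1+u)e^{-u}$ is decreasing on $[0,\infty)$ and satisfies $(1+u-C)e^{-(u-C)}\ll_C(1+u)e^{-u}$. The case of bounded $\widetilde M_m(X)$ is absorbed into the constant, as in the proof of Corollary~\ref{cor:pret_cor2}. Together these give the claimed bound with an absolute implied constant.

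I expect no real obstacle. The only points needing care are that the $t$-range of the minimum in $M_{\mathrm{pret}}$, namely $|t|\le\log Z$, fits inside the range $|u|\le 2X$ defining $\widetilde M_m$ after the shift by $t_0$, and that the $O(1)$ level-change loss is uniform in $m$, $t_0$ and $u$.
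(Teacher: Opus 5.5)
Your proposal is correct and matches the paper's proof essentially step for step: apply Theorem~\ref{thm:halasz_1bounded_mult} to $f\lambda_m\Na^{-it_0}$ at level $Z$, use $|t_0+u|\le 2X$ for $|u|\le\log Z$ to compare with the minimum defining $\widetilde M_m$, and pass from level $Z$ to level $X$ at an $O(1)$ cost via Lemma~\ref{lem:Mertens}. Your explicit use of the nonnegativity of the distance and the monotonicity of $(1+u)e^{-u}$ spells out what the paper summarizes as ``the $O(1)$ shift is harmless.''
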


\begin{proof}
Apply Theorem~\ref{thm:halasz_1bounded_mult} to the ideal function
\[
F_{m,t_0}(\mathfrak a):=f(\mathfrak a)\lambda_m(\mathfrak a)(\Na\mathfrak a)^{-it_0},
\]
which is multiplicative and $1$-bounded.

Its pretentious parameter at scale $Z$ is
\[
\mathcal M_{m,t_0}(Z)
:=
\min_{|u|\le \log Z}\ \sum_{\Na\mathfrak p\le Z}
\frac{1-\Re\!\Bigl(f(\mathfrak p)\lambda_m(\mathfrak p)(\Na\mathfrak p)^{-i(t_0+u)}\Bigr)}{\Na\mathfrak p}.
\]
For every $|u|\le \log Z$, the hypothesis $|t_0|+\log Z\le 2X$ gives
\[
|t_0+u|\le 2X.
\]
Since $Z\in[X/2,2X]$, Lemma~\ref{lem:Mertens} implies
\[
\sum_{\min\{X,Z\}<\Na\mathfrak p\le \max\{X,Z\}}\frac{1}{\Na\mathfrak p}\ll 1.
\]
Therefore
\begin{align*}
\mathbb D_m(f;t_0+u;Z)^2
&\ge
\mathbb D_m(f;t_0+u;X)^2
-2\!\!\sum_{\min\{X,Z\}<\Na\mathfrak p\le \max\{X,Z\}}\frac{1}{\Na\mathfrak p}\\
&\ge
\widetilde M_m(X)-O(1).
\end{align*}
Thus
\[
\mathcal M_{m,t_0}(Z)\ge \widetilde M_m(X)-O(1).
\]

Insert this lower bound into Theorem~\ref{thm:halasz_1bounded_mult}. The $O(1)$ shift is harmless, since if
$\mathcal M_{m,t_0}(Z)\ge \widetilde M_m(X)-C$ and $\mathcal M_{m,t_0}(Z)\ge 0$, then
\[
(1+\mathcal M_{m,t_0}(Z))e^{-\mathcal M_{m,t_0}(Z)}
\ll_C
(1+\widetilde M_m(X))e^{-\widetilde M_m(X)}.
\]
Finally, by the definition of $g_m$,
\[
\sum_{\Na\mathfrak a\le Z} f(\mathfrak a)\lambda_m(\mathfrak a)(\Na\mathfrak a)^{-it_0}
=\sum_{n\le Z} g_m(n)n^{-it_0}.
\]
\end{proof}

\begin{proposition}[Short-interval bound for a fixed angular mode]\label{prop:fixed_flambda_short_interval}
Let $h=h(X)$ satisfy $h=o(X)$ and $h/\sqrt X\to\infty$. Fix an integer
$m\ne0$. Assume hypotheses \textup{(H1)} and \textup{(H2)} of
Theorem~\ref{thm:sectorial_MR} for this $m$. Then
\[
E_m(X):=
\left(\frac{2}{X}\int_{X/2}^{X}\left|\frac{S_{f\lambda_m}(x;h)}{h}\right|^2dx\right)^{1/2}
=o(1).
\]
\end{proposition}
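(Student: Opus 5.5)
The plan is to transfer the problem to $\N$ via the norm-compression $g_m=h_m^*$. Since
\[
\sum_{x<n\le x+h}g_m(n)=S_{f\lambda_m}(x;h),
\]
$E_m(X)$ is exactly the mean-square short-interval deviation of the divisor-bounded multiplicative function $g_m$. That deviation is controlled by \cite[Theorem~1.7]{Mangerel}. Applied with $Y=X+h$, this theorem gives a parameter $t_{0,m}\in[-Y,Y]$ and bounds
\[
\frac{2}{X}\int_{X/2}^{X}\Bigl|\frac1h\sum_{x<n\le x+h}g_m(n)-\frac1h\int_x^{x+h}u^{it_{0,m}}\,du\cdot\frac{1}{X}\sum_{n\le X}g_m(n)n^{-it_{0,m}}\Bigr|^2dx
\]
by a quantity that tends to zero. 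This quantity involves the normalising Euler product $H(g_m;X)$, the product $\prod_{p\le X}(1+(|g_m(p)|-1)/p)$, and a term decaying in $h/X$ (or its logarithm), given that $h\to\infty$ and $h=o(X)$. So the proof has three steps: verify the hypotheses of Mangerel's theorem for $g_m$, show that the resulting error term is $o(1)$, and show that the main term (the long twisted average) is $o(1)$.

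\textbf{Step 1: the hypotheses of Mangerel's theorem.} The divisor-type growth condition holds because $|g_m(n)|\le d(n)$, so $|g_m(p)|\le 2$ and $|g_m(p^k)|\le k+1$. The Euler-product normalisations are supplied by Lemma~\ref{lem:gm_euler_factors}, uniformly in $m$. The remaining hypothesis is the prime lower bound: a positive proportion of $|g_m(p)|$ in the weighted sense, that is
\[
\sum_{z<p\le w}\frac{|g_m(p)|}{p}\ge A\sum_{z<p\le w}\frac1p-O(1/\log z).
\]
This is where (H1) enters. For $p\equiv1\pmod 4$ we have $g_m(p)=f(\mathfrak p)\lambda_m(\mathfrak p)+f(\overline{\mathfrak p})\lambda_m(\overline{\mathfrak p})$. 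Summing over conjugate pairs converts the ideal sum in (H1) into twice the rational-prime sum over $p\equiv 1\pmod 4$. Mertens in progressions then shows that this is $\ge \tfrac{A_m}{2}\sum_{z<p\le w}1/p-O(1/\log z)$, since the split primes carry half the density and the inert primes contribute $g_m(p)=0$. I expect this step to be the main obstacle. One must check that Mangerel's hypothesis is stated in exactly this form, with a uniform range $2\le z\le w\le X+h$ and a density constant, and that the half-density loss from the inert primes is allowed, since the constant may be any positive $A$.

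\textbf{Steps 2 and 3: the two terms.} With the hypotheses verified, Mangerel's error term is $o(1)$ because $h/\sqrt X\to\infty$ forces $h\to\infty$; Lemma~\ref{lem:gm_euler_factors} bounds the Euler products independently of $X$ to the required order. For the main term, $\bigl|\frac1h\int_x^{x+h}u^{it_0}du\bigr|\le1$, so it suffices that $\frac1X\sum_{n\le X}g_m(n)n^{-it_{0,m}}=o(1)$. If Mangerel's main term is at scale $Z\asymp X$ rather than $X$, this is handled the same way. Apply Lemma~\ref{lem:long_flambda_small} with $Z=X$ and $t_0=t_{0,m}$; we have $|t_{0,m}|+\log X\le X+h+\log X\le 2X$ for large $X$ because $h=o(X)$. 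The lemma then bounds the average by $(1+\widetilde M_m(X))e^{-\widetilde M_m(X)}+\log\log X/\log X$, which tends to $0$ by (H2). Combining the two terms gives $E_m(X)^2=o(1)$.
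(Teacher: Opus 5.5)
Your proposal follows the paper's proof essentially step for step. You norm-compress to $g_m$, apply Mangerel's Theorem~1.7 at scale $Y=X+h$, obtain the prime lower bound with constant $A_m/2$ from (H1) via conjugate pairs and Mertens in progressions, and dispose of the twisted long average with Lemma~\ref{lem:long_flambda_small} and (H2). The paper's main term is the dyadic average $\frac{2}{Y}\sum_{Y/2<n\le Y}$, so it applies that lemma at $Z=Y$ and $Z=Y/2$ and subtracts.

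One correction: $H(g_m;Y)$ is not bounded independently of $X$. It is only $\ll\log Y$, and it does grow, at least like $(\log Y)^{1/2}$, because $g_m(p)=0$ for $p\equiv 3\pmod 4$. So Mangerel's error term is $o(1)$ not merely because $h\to\infty$, but because his parameter $h_0=h/H(g_m;Y)$ tends to infinity. That requires $h/\log X\to\infty$, which still follows from $h/\sqrt X\to\infty$.
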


\begin{proof}
Set $Y:=X+h$. We first verify the hypotheses of \cite[Theorem~1.7]{Mangerel}
for the norm-compressed function $g_m$.
By construction,
\[
\sum_{x<n\le x+h} g_m(n)=S_{f\lambda_m}(x;h),
\]
while $g_m$ is multiplicative, $|g_m(p)|\le 2$, and $|g_m(n)|\le d(n)$.
For an odd rational prime $p$, if $p\equiv 1\pmod 4$ and $\mathfrak p,\overline{\mathfrak p}$
are the two prime ideals above $p$, then
\[
g_m(p)=f(\mathfrak p)\lambda_m(\mathfrak p)+f(\overline{\mathfrak p})\lambda_m(\overline{\mathfrak p}),
\]
whereas $g_m(p)=0$ for $p\equiv 3\pmod 4$. For $p=2$ one has
$g_m(2)=f((1+i))\lambda_m((1+i))$, but this prime never appears in the ranges
$z<p\le w$ because $z\ge 2$ and the left endpoint is strict.
Therefore Hypothesis~\textup{(H1)} yields
\[
2\sum_{\substack{z<p\le w\\ p\equiv 1\ (4)}}\frac{|g_m(p)|}{p}
\ge
2A_m\sum_{\substack{z<p\le w\\ p\equiv 1\ (4)}}\frac{1}{p}
-O_m\!\Bigl(\frac{1}{\log z}\Bigr)
\]
uniformly for $2\le z\le w\le Y$. Dividing by $2$ and using Mertens' theorem in the
residue classes modulo $4$ (see \cite{Williams74}),
\[
\sum_{\substack{z<p\le w\\ p\equiv 1\ (4)}}\frac{1}{p}
=
\frac12\sum_{z<p\le w}\frac{1}{p}
+O\!\Bigl(\frac{1}{\log z}\Bigr),
\]
we obtain
\[
\sum_{z<p\le w}\frac{|g_m(p)|}{p}
\ge
\frac{A_m}{2}\sum_{z<p\le w}\frac{1}{p}
-O_m\!\Bigl(\frac{1}{\log z}\Bigr),
\]
because $g_m(p)=0$ for $p\equiv 3\pmod 4$ and $p=2$ is absent from the interval.
Put
\[
\alpha_m:=\min\{A_m/2,1\}.
\]
Thus Hypothesis~\textup{(H1)} verifies condition~\textup{(iii)} of \cite[Theorem~1.7]{Mangerel}
for $g_m$ at scale $Y$, with parameter $\alpha_m$ and fixed parameters $B=2$ and $C=1$.
Hence $g_m\in M(Y;\alpha_m,2,1)$. Fix any $0<\sigma<\sigma_{\alpha_m,2}$.
By \cite[(1.9)]{Mangerel},
\[
M(Y;\alpha_m,2,1)\subset M(Y;\alpha_m,2,1;1,\sigma),
\]
so \cite[Theorem~1.7 and Remark~1.8]{Mangerel} applies.
Let
\[
H_m(U):=H(g_m;U)
\qquad (U\ge 3).
\]
By Lemma~\ref{lem:gm_euler_factors}, $H_m(U)\ll \log U$. Since $Y=X+h=(1+o(1))X$ and
$|g_m(p)|\le 2$ for all primes $p$,
\[
\log\frac{H_m(Y)}{H_m(X)}\ll \sum_{X<p\le Y}\frac{1}{p}=o(1),
\]
and therefore $H_m(Y)\asymp H_m(X)$. Set
\[
h_{0,m}:=\frac{h}{H_m(Y)}.
\]
Since $h/\sqrt X\to\infty$, in particular $h/\log X\to\infty$, and $H_m(Y)\ll\log Y\asymp\log X$,
we have $h_{0,m}\to\infty$. Also, since $h=o(X)$ and $Y\asymp X$, for $X$ large we have
$10\le h_{0,m}\le Y/(10H_m(Y))$.
This is the $h_0$-parameter in \cite[Theorem~1.7]{Mangerel}.

The verification above gives $g_m\in M(Y;\alpha_m,2,1;1,\sigma)$. Moreover
Lemma~\ref{lem:gm_euler_factors} gives the required control of the auxiliary Euler factor
$H(g_m;Y)$, and $h_{0,m}\to\infty$. Thus the error term in
\cite[Theorem~1.7 and Remark~1.8]{Mangerel} is $o(1)$.

We next apply \cite[Theorem~1.7 and Remark~1.8]{Mangerel} to $g_m$ at scale $Y$. 
The theorem supplies a parameter \(t_{0,m}\in[-Y,Y]\) for which the short-interval
average of \(g_m\) is close, in mean over \(y\in[Y/2,Y]\), to the corresponding
twisted long average. We will show that this long average is negligible using
Lemma~\ref{lem:long_flambda_small} and Hypothesis~(H2).

Define
\[
A_m(y):=\frac1h\sum_{y-h<n\le y} g_m(n).
\]
By the change of variables $y=x+h$,
\[
\frac{2}{X}\int_{X/2}^{X}\left|\frac{S_{f\lambda_m}(x;h)}{h}\right|^2dx
=
\frac{2}{X}\int_{X/2+h}^{X+h}|A_m(y)|^2\,dy.
\]
Since $h=o(X)$, the interval $[X/2+h,X+h]$ is contained in $[Y/2,Y]$ for $X$ large, so
\[
\frac{2}{X}\int_{X/2}^{X}\left|\frac{S_{f\lambda_m}(x;h)}{h}\right|^2dx
\ll
\frac{2}{Y}\int_{Y/2}^{Y}|A_m(y)|^2\,dy.
\]
Applying \cite[Theorem~1.7 and Remark~1.8]{Mangerel} at scale $Y$ yields
\[
\frac{2}{Y}\int_{Y/2}^{Y}\left|A_m(y)
-\Bigl(\frac{1}{h}\int_{y-h}^{y}u^{it_{0,m}}\,du\Bigr)\cdot
\frac{2}{Y}\sum_{Y/2<n\le Y} g_m(n)n^{-it_{0,m}}
\right|^2dy
\ = \ o(1),
\]
where $t_{0,m}$ is the parameter supplied by \cite[Theorem~1.7]{Mangerel}.

It remains to bound the main term. By \cite[Theorem~1.7]{Mangerel}, the minimizing parameter satisfies
\[
t_{0,m}\in[-Y,Y].
\]
Since $h=o(X)$, we have $Y=X+h=(1+o(1))X$, and therefore
\[
Y+\log Y\le 2X
\]
for all sufficiently large $X$. Hence
\[
|t_{0,m}|+\log Y\le Y+\log Y\le 2X.
\]
Applying Lemma~\ref{lem:long_flambda_small} with this $t_0=t_{0,m}$ gives
\[
\sum_{n\le Y} g_m(n)n^{-it_{0,m}}
\ll
(1+\widetilde M_m(X))e^{-\widetilde M_m(X)}\,Y
+\frac{Y}{\log Y}\, \log\log Y
=
o(Y),
\]
by Hypothesis~\textup{(H2)}.
Applying the same lemma with $Z=Y/2$ also gives
\[
\sum_{n\le Y/2} g_m(n)n^{-it_{0,m}}=o(Y).
\]
Subtracting, we obtain
\[
\sum_{Y/2<n\le Y} g_m(n)n^{-it_{0,m}}=o(Y),
\]
and hence
\[
\frac{2}{Y}\sum_{Y/2<n\le Y} g_m(n)n^{-it_{0,m}}=o(1).
\]
Since
\[
\left|\frac{1}{h}\int_{y-h}^{y}u^{it_{0,m}}\,du\right|\le 1,
\]
the main term above is $o(1)$. Combining this with the error term from \cite[Theorem~1.7]{Mangerel} yields
the stated estimate.
\end{proof}

\subsection{Proof of the sectorial short-interval Hal\'asz theorem}

\begin{proof}[Proof of Theorem~\ref{thm:sectorial_MR}]
For each fixed integer $m\ne0$, Proposition~\ref{prop:fixed_flambda_short_interval} gives
$E_m(X)=o(1)$. Hence there exists an integer-valued function $T_1(X)\to\infty$ such that
\[
\log(T_1(X)+1)\max_{1\le |m|\le T_1(X)}E_m(X)\to0
\]
and
\[
\frac{\log(T_1(X)+1)}{T_1(X)}\to0.
\]
Indeed, choose $X_j\to\infty$, $j\ge2$, so large that, for all $X\ge X_j$,
\[
\max_{1\le |m|\le j}E_m(X)\le \frac1{j\log(j+1)}.
\]
After increasing the sequence if necessary so that it is strictly increasing, set
$T_1(X)=j$ for $X_j\le X<X_{j+1}$, and set $T_1(X)=2$ for $X<X_2$.

Apply Proposition~\ref{prop:sector_decomp_window} with $(X,Y)=(x,x+h)$ and $T_1(X)$ in place of $T$:
\begin{align*}
S_{f,J}(x;h)-\delta_J S_f(x;h)
&=
\sum_{0<|m|\le T_1(X)} b_m(J)\,S_{f\lambda_m}(x;h) \\
&\quad+O\!\left(\frac{h\log(T_1(X)+1)}{T_1(X)}+X^{1/2}\right),
\end{align*}
uniformly for $x\in[X/2,X]$.
Divide by $h$ and take $L^2$-means over $x\in[X/2,X]$.
Using $\sum_{0<|m|\le T_1(X)}|b_m(J)|\ll\log(T_1(X)+1)$ and the triangle inequality in $L^2$, we get
\begin{align*}
\left(\frac{2}{X}\int_{X/2}^{X}\left|\frac{S_{f,J}(x;h)-\delta_J S_f(x;h)}{h}\right|^2dx\right)^{1/2}
&\ll \log(T_1(X)+1)\max_{1\le |m|\le T_1(X)}E_m(X) \\
&\quad + \frac{\log(T_1(X)+1)}{T_1(X)}+\frac{X^{1/2}}{h}.
\end{align*}
The right-hand side is $o(1)$, by the choice of $T_1(X)$ and the hypothesis $h/\sqrt X\to\infty$. Therefore
\[
\frac{2}{X}\int_{X/2}^{X}
\left|\frac{S_{f,J}(x;h)-\delta_J S_f(x;h)}{h}\right|^2dx
=o(1).
\]
\end{proof}

\begin{proof}[Proof of Remark~\ref{rem:sectorial_MR_unrestricted}]
The proof of Proposition~\ref{prop:fixed_flambda_short_interval} applies with $m=0$ under the
additional hypotheses in Remark~\ref{rem:sectorial_MR_unrestricted}. Since $\lambda_0\equiv 1$,
\[
g_0(n)=\sum_{\Na\mathfrak a=n}f(\mathfrak a),
\qquad
\sum_{x<n\le x+h} g_0(n)=S_f(x;h).
\]
Therefore
\[
\frac{2}{X}\int_{X/2}^{X}\left|\frac{S_f(x;h)}{h}\right|^2\,dx=o(1).
\]
The pointwise conclusion outside a set of Lebesgue measure $o(X)$ follows from Chebyshev's inequality.
\end{proof}

\section*{Acknowledgments}

The author thanks his advisor, Joel Moreira, for valuable feedback during the development of this work.

Jan Ku\'s is supported by the Warwick Mathematics Institute Centre for Doctoral Training, and gratefully
acknowledges funding from the European Research Council (ERC) under the European Union's Horizon 2020
research and innovation programme (Grant agreement No.~EP/Y014030/1).

\bibliographystyle{amsplain}
\bibliography{references}

\end{document}